\def\R{\mathbb R}
\newcommand{\nfrac}[2]{{#1}/{#2}}
\DeclarePairedDelimiter\myset\{\}
\newcommand{\set}[2][*]{\expandafter\myset#1{#2}}
\newcommand{\vv}[2]{\begin{pmatrix}#1\\#2\end{pmatrix}}
\newcommand{\vvv}[3]{\begin{pmatrix}#1\\#2\\#3\end{pmatrix}}
\newcommand{\mesh}[1][h]{\mathcal{T}_{#1}}
\newcommand{\coarsemesh}{\mesh[H]}
\def\cell{T}
\def\face{F}
\def\boundary{B}
\def\setoffaces{\Gamma_I}
\def\setofboundaries{\Gamma_B}
\def\diam{\operatorname{diam}}
\def\jump#1{[\![#1]\!]}
\def\mean#1{\{\!\!\{#1\}\!\!\}}
\def\eigenvalue{\lambda}
\newcommand{\Ltwoprojection}[1][h]{\Pi^{#1}}
\newcommand{\interpolant}[1][h]{I^{#1}}
\def\ph{p_h}
\def\gh{g_h}
\def\bu{\boldsymbol u}
\def\bv{\boldsymbol v}
\def\bw{\boldsymbol w}
\def\bx{\boldsymbol x}
\def\by{\boldsymbol y}
\def\bpsi{\boldsymbol \psi}
\def\bphi{\boldsymbol \phi}
\def\bpi{\boldsymbol \pi}
\def\bbf{\boldsymbol f}
\def\bn{\boldsymbol n}
\def\btau{\bm \tau}
\def\kerDform{\ker(\mathscr D)}
\newcommand{\Wh}[1][]{W_h^{#1}}
\newcommand{\Wj}[1][]{W_j^{#1}}
\def\mixedP{\mathbb P}
\def\systemoperator{\mathbb{A}}
\def\Pad{\mathbb{P}_{\text{ad}}}
\def\Pmu{\mathbb{P}_{\text{mu}}}
\newcommand{\Emu}{\mathbb{E}_{\text{mu}}}
\def\Mmu{\mathbb{M}_{\text{mu}}}
\def\singularP{P}
\def\singularPad{P_{\text{ad}}}
\def\singularPmu{P_{\text{mu}}}
\def\singularEmu{E_{\text{mu}}}
\def\singularoperator{A}
\def\storage{c_s}
\def\willis{\alpha}
\def\permeability{\bm K}
\def\scaledlambda{\hat\lambda}
\def\scaledpermeability{\hat\permeability}
\def\scaledkappa{\hat\kappa}
\def\scaledstorage{\hat c_s}
\def\scaledRmax{\hat R_{\max}}
\def\scaledRmin{\hat R_{\min}}
\newcommand{\constK}{c_{\scaledpermeability}}
\def\relaxation{\omega}
\def\relaxationconstant{\relaxation_0}
\def\localstabconstant{C_1}
\def\CSconstant{\epsilon_{ij}}
\def\spectralradius{\rho(\mathcal{E})}
\def\constantinfsupu{\gamma_u}
\def\constantinfsupv{\gamma_v}
\def\strain#1{\bm \varepsilon (#1)}
\def\div#1{\operatorname{div} #1}
\def\curl{\operatorname{\bf curl}}
\def\Hdiv{H^{\text{div}}}
\def\Hcurl{H^{\curl}}
\def\scal(#1){\left(#1\right)}
\def\ltwoscal{\@ifnextchar[{\@ltwoscal}{\@ltwoscal[\Omega]}}
\def\@ltwoscal[#1](#2){\scal(#2)_{#1}}
\def\abs#1{\left|#1\right|}
\def\norm#1{\left\|#1\right\|}
\newcommand{\ltwonorm}[2][\Omega]{\norm{#2}_{#1}}
\newcommand{\linftynorm}[2][]{\norm{#2}_{L^\infty#1}}
\newcommand{\Honenorm}[1]{\norm{#1}_{1}}
\newcommand{\Hdivnorm}[1]{\norm{#1}_{\Hdiv}}
\newcommand{\dgnormone}[2][h]{\norm{#2}_{1,#1}}
\newcommand{\dgnormtwo}[2][h]{\norm{#2}_{2,#1}}
\newcommand{\Whnorm}[1]{\norm{#1}_{\Wh}}
\def\aform(#1){a\!\left(#1\right)}
\def\ajform(#1){a_j\!\left(#1\right)}
\def\eeform(#1){e\!\left(#1\right)}
\def\eejform(#1){e_j\!\left(#1\right)}
\def\eehform(#1){e_h\!\left(#1\right)}
\def\eeHform(#1){e_H\!\left(#1\right)}
\def\ddform(#1){d\!\left(#1\right)}
\def\kform(#1){k\!\left(#1\right)}
\def\divform(#1){b\!\left(#1\right)}
\def\mixedform(#1){\mathcal A_h\!\left(#1\right)}
\def\coarsemixedform(#1){\mathcal A_H\!\left(#1\right)}
\def\mixedrhs(#1){\mathcal F_h\!\left(#1\right)}
\def\locmixedform{\@ifnextchar[{\@locmixedform}{\@locmixedform[j]}}
\def\@locmixedform[#1](#2){\mathcal A_{#1}\!\left(#2\right)}
\def\singularform(#1){\mathscr A_h\!\left(#1\right)}
\def\singularrhs(#1){\mathscr F_h\!\left(#1\right)}
\def\coarsesingularform(#1){\mathscr A_H\!\left(#1\right)}
\def\locsingularform{\@ifnextchar[{\@locsingularform}{\@locsingularform[j]}}
\def\@locsingularform[#1](#2){\mathscr A_{#1}\!\left(#2\right)}
\def\Ehform(#1){\mathscr E_h\!\left(#1\right)}
\def\Dform(#1){\mathscr D\!\left(#1\right)}
\newtheorem{theorem}{Theorem}[section]
\newtheorem{lemma}[theorem]{Lemma}
\newtheorem{assumption}[theorem]{Assumption}
\newtheorem{proposition}[theorem]{Proposition}
\newtheorem{remark}[theorem]{Remark}
\crefname{assumption}{Assumption}{Assumptions}
\crefname{assumption}{assumption}{assumptions}
\Crefname{assumption}{Assumption}{Assumptions}
\title{Monolithic two-level Schwarz preconditioner for Biot's consolidation model in two space dimensions}
\author{Stefan Meggendorfer\thanks{Institute for Modelling Hydraulic and Environmental Systems (LH2), University of Stuttgart, Germany}
\and
Guido Kanschat\thanks{Interdisciplinary Center for Scientific Computing (IWR), Heidelberg University, Germany}
\and
Johannes Kraus\thanks{Faculty of Mathematics, University of Duisburg-Essen, Germany}}
\begin{document}
\maketitle

\begin{abstract}
This paper addresses the construction and analysis of a class of domain decomposition methods
for the iterative solution of the quasi-static Biot problem in three-field formulation. 
The considered discrete model arises from time discretization by the implicit Euler method and space discretization by a family of strongly mass-conserving methods exploiting $\Hdiv$-conforming approximations of the solid displacement and
fluid flux fields.
For the resulting saddle-point problem, we construct monolithic overlapping domain decomposition (DD) methods whose analysis relies on a transformation into an equivalent symmetric positive definite system and on stable decompositions of the involved finite element spaces under proper problem-dependent norms.
Numerical results on two-dimensional test problems are in accordance with the provided theoretical uniform convergence estimates for the two-level multiplicative Schwarz method.
\end{abstract}

\noindent\textbf{MSC-Classes:} 65M12, 65M55, 65F10
\\[1ex]
\noindent\textbf{Keywords:} quasi-static Biot model, poroelasticity, domain decomposition, preconditioning, mass-conservative $\Hdiv$-conforming discontinuous Galerkin

%%%%%%%%%%%%%%%%%%%%%%%%%%%%%%%%%%%%%%%%%%%%%%%%%%
%%%%%%%%%%%%%%%%%%%%%%%%%%%%%%%%%%%%%%%%%%%%%%%%%%
\section{Introduction}
%%%%%%%%%%%%%%%%%%%%%%%%%%%%%%%%%%%%%%%%%%%%%%%%%%
%%%%%%%%%%%%%%%%%%%%%%%%%%%%%%%%%%%%%%%%%%%%%%%%%%

Mathematical models describing the interaction of fluid flow with the mechanical deformation of porous media
go back to the pioneering works by Karl von Terzaghi and Maurice Anthony Biot in the first half of the last
century~\cite{Terzaghi1925erdbaumechanic,Biot1941general,Terzaghi1943theoretical,Biot1955theory}.
Important application areas include, but are not limited to, petroleum and reservoir engineering, CO2 sequestration
and the mechanics of biological tissues. Partial saturation, the incorporation of two- or multi-phase flow, thermal
processes, and/or large deformations, and/or the consideration of multiple fluid networks lead to even more complex,
sometimes highly nonlinear mathematical models that require physically correct (structure-preserving) discretization
and robust iterative solution methods. 

The well-posedness of the basic Biot model of consolidation on the continuous and discrete levels has
been established in the works of Zenisek and Showalter using semi-group theory and Galerkin discretization
methods~\cite{Zenisek1984existence,Zenisek1984finite,Showalter00diffusion}.
The simplest linear model uses a two-field formulation of Biot's equations for the solid displacement  $\bu$ and pore
pressure $p$. Popular time discretizations of the quasi-static differential algebraic system are the backward Euler
or high-order strongly stable time integration methods, see, e.g.,~\cite{AxelssonBlahetaKohut15}.
A stability and convergence analysis of inf-sup stable finite element approximations of this two-field model has
first been presented in~\cite{MuradLoula1992improved,MuradLoula1994stability} in the setting of implicit Euler time stepping.
Discretizations based on three-field-formulation have originally been proposed
in~\cite{PhillipsWheeler2007coupling1,PhillipsWheeler2007coupling2} where standard continuous Galerkin
approximations of the displacement variable $\bu$ are coupled to a mixed finite-element method for the pore pressure $p$
based on introducing the flux variable $\bv$ to the system, which obeys the Darcy law. This approach has also been extended
to discontinuous Galerkin approximations of $\bu$ in~\cite{PhillipsWheeler2008coupling} and other nonconforming approximations,
e.g., using modified rotated bilinear elements~\cite{Yi2013coupling}, or Crouzeix-Raviart elements in~\cite{Hu2017nonconforming}.
Lately, the conservation of mass was identified essential for parameter robustness.
In~\cite{HongKraus18,KanschatRiviere18}, families of strongly mass conserving discretizations based on the
$\Hdiv$-conforming discontinuous Galerkin discretization of $\bu$ have been suggested and proven stable independently
of model and discretization parameters (Lam\'{e} parameters, permeability and Biot-Willis parameter, storage coefficient, time step
and mesh size), see, e.g.~\cite{HongKraus18,HongKraus19}, for time-dependent error estimates, see~\cite{KanschatRiviere18}.
More recently, mass-conservation was achieved by finite elements based on enrichment in~\cite{GasparRodrigoHu19,LeeYi22}
and by mixed virtual finite elements in~\cite{WangCai22}.

A discretization of a four-field formulation of Biot's model has been proposed in~\cite{Yi2014convergence}. The method uses the
(effective) stress tensor $\bm \sigma$, the displacement $\bu$, the fluid flux $\bv$, and the pore pressure $p$ as unknowns and
couples two standard mixed finite element methods, one for the mechanics and one for the flow subproblem, the former of which
is based on the Hellinger-Reissner formulation. Optimal a priori error estimates were shown for both semidiscrete and fully discrete
problems when the flux unknown is approximated in the Raviart-Thomas space and the stress tensor in the Arnold-Winther space.
The error analysis of this coupled mixed method has been complemented in~\cite{Lee2016robust} by estimates in $L^\infty$-norm
in time and $L^2$-norm in space that are robust with respect to the Lam\'{e} parameters and do not require strict positivity of the
constrained storage coefficient $\storage$.

When it comes to the solution of the algebraic problems arising from discretization of Biot's problem, preconditioning techniques
have been developed for various formulations and discretizations, in particular, in the framework of norm- or field-of-values-equivalent
operator preconditioners, see,
e.g.~\cite{boffi2016nonconforming,adler2019robust} for two-field,
\cite{oyarzua2016locking, Hu2017nonconforming,Lee2016parameter,HongKraus18,Lee2019mixed,HongKraus19,Hong2022robust,Kraus2023hybridized} for three-field, and~\cite{Lee2016robust,Baerland2017weakly,BoonKuchtaMardal21}
for four-field formulations. 
Here, \cite{HongKraus19,Lee2019mixed,Kraus2023hybridized} focus on multiple network extensions of Biot's model and \cite{Hong2022robust} on a generalization to Biot-Brinkman equations. 
The key task in the development of such methods is the
stability analysis of the underlying saddle-point problems, which can be performed in the framework presented in~\cite{Hong2021new}.

By a decomposition into physical subsystems, the coupled static problem can be solved implicitly, using a loose or explicit coupling,
or an iterative coupling. The latter, often provides an attractive alternative in terms of achieving high accuracy at reasonable computational
cost. The most popular procedures in this category are the undrained split, the fixed-stress split, the drained split and the fixed-strain split
iterative methods, which also generate preconditioners for Krylov methods such as the GMRES method. As shown in~\cite{Kim2011stability},
in contrast to the drained split and the fixed-strain split methods, the undrained split and fixed-stress split methods are unconditionally stable. 
The first convergence analysis of the latter methods has been presented in~\cite{Mikelic2013convergence} for the quasi-static Biot system.
More recent works on fixed-stress type split methods have focused on the parameter-robustness of the scheme and the optimization of the stabilization/acceleration parameter~\cite{Storvik_2019optimization,HongEtAl2020fixed-stress}, extensions to heterogeneous poroelastic media~\cite{AlmaniKumarWheeler23}, and generalizations to nonlinear poroelasticity models~\cite{kraus2023fixedstress}. 
For a parameter-robust convergence analysis of splitting methods that extends also to multiple network poroelastic theory (MPET) equations, see~\cite{HongEtAl2020fixed-stress,HongEtAl2020uzawa}.

However, no matter whether using classical block diagonal or block triangular preconditioners or splitting methods, the efficient solution of the subsystems for the individual physical fields remains an important issue. Domain decomposition and/or (algebraic) multigrid methods provide a feasible approach to solve the arising $H^1$ and $\Hdiv$ subproblems although their parameter-robust performance, e.g, for nearly incompressible materials and/or low hydraulic conductivities can be quite challenging.
In any case, this approach requires to combine at least two (nearly) optimal solvers for subproblems, such as elasticity and Darcy flow, 
in order to obtain an overall (nearly) optimal method.

A monolithic multigrid approach for a reduced-quadrature discretization has recently been developed in \cite{AdlerHeHuMaclachlanOhm23} where an additive Vanka relaxation method, based on overlapping vertex-patches, and an inexact Braess-Sarazin relaxation method is considered.
The methods rely on optimized relaxation factors depending on the Poisson ratio.
Their numerical tests confirm robustness of the method in the permeability coefficient and for moderately incompressible materials.
In comparison, our method is robust with respect to the Poisson ratio.

In this work we design a monolithic two-level Schwarz method based on overlapping vertex-patches for the quasi-static Biot problem in three-field formulation similar to \cite{AdlerHeHuMaclachlanOhm23}. However, instead of a reduced-quadrature discretization we apply the preconditioner to a family of strongly mass-conserving mixed finite element methods using an $\Hdiv$-conforming discontinuous Galerkin ansatz for the displacement field, as proposed in~\cite{HongKraus18,KanschatRiviere18}.
This allows us to prove convergence of the method by transforming the saddle-point problem into an equivalent singularly perturbed symmetric
positive definite problem, cf.~\cite{Schoeberl99dissertation,LeeWuXuZikatanov07}.
In particular, we obtain a robust method with standard choices of the relaxation parameters under the assumption of two-sided bounds on the permeability tensor.

The remainder of the article is organized as follows:
we introduce Biot's consolidation model and its conservative discretization in~\cref{sec:model}. The monolithic two-level algorithm is presented
in~\cref{sec:twolevelalgorithm} and analyzed in~\cref{sec:analysis}. In~\cref{sec:experiments} we provide numerical evidence of our theoretical findings.

%%%%%%%%%%%%%%%%%%%%%%%%%%%%%%%%%%%%%%%%%%%%%%%%%%
%%%%%%%%%%%%%%%%%%%%%%%%%%%%%%%%%%%%%%%%%%%%%%%%%%
\section{Model and discretization}
\label{sec:model}
%%%%%%%%%%%%%%%%%%%%%%%%%%%%%%%%%%%%%%%%%%%%%%%%%%
%%%%%%%%%%%%%%%%%%%%%%%%%%%%%%%%%%%%%%%%%%%%%%%%%%
Biot's consolidation model in three-field formulation couples the displacement field $\bu$ of the solid component, the seepage velocity $\bv$ of the fluid, and the fluid pressure $p$ in a fully saturated porous medium at constant temperature.
It is posed on a computational domain $\Omega\times(0,T)$ consisting of a spatial domain $\Omega \subset \R^d$, $d\in\{1,2,3\}$, where in this work we focus on the case $d=2$, and a time interval $(0,T)$, $T\in\R$. In strong form, Biot's quasi-static model of consolidation reads
\begin{gather*}
%\label{eq:strongproblem}
    \arraycolsep1pt
    \begin{matrix}
        -\div\left(2\mu\strain \bu
        +
        \lambda \div (\bu) \bm I \right)
        &&&+&
        \willis \nabla p
        &=& \bbf \quad \mbox{in } \Omega \times (0,T),
        \\
        &&\bv
        &+&\permeability \nabla p
        &=& \bm 0 \quad \mbox{in } \Omega \times (0,T),
        \\
        -\willis \div \partial_t \bu
        &-& \div \bv
        &-& \storage \partial_t p
        &=& g \quad \mbox{in } \Omega \times (0,T),
    \end{matrix}
\end{gather*}
with initial conditions $\bu(0)=\bu_0$, and $p(0)=p_0$ at time $t=0$.
The set of equations describes the momentum balance, Darcy's law, and the mass balance, respectively.
Here, the physical parameters $\lambda>0$ and $\mu>0$ denote the Lam\'e coefficients of linear elasticity, $\willis>0$ denotes the Biot-Willis constant, $\permeability$ is the symmetric and positive definite permeability coefficient of the porous medium, and $\storage\ge0$ is the specific storage coefficient.
Further, the strain tensor is defined by $\strain \bu := (\nabla \bu+\nabla \bu^T)/2$.
For an introduction to the system of equations, as well as a discussion of the physical parameters we refer the reader to \cite{DetournayCheng93,Coussy04}.

We apply a semi-discretization in time by the backward Euler method with time step size $\tau>0$, resulting in a
semi-discrete static problem of the form
\begin{gather}\label{eq:strongproblemeuler}
    \arraycolsep1pt
    \begin{matrix}
        - 2 \mu \div \strain \bu 
        -
        \lambda \nabla \div \bu
        &&&+&
        \willis \nabla p
        &=& \bbf \quad \mbox{in } \Omega,
        \\
        &&\bv
        &+&\permeability \nabla p
        &=& \bm 0 \quad \mbox{in } \Omega,
        \\
        -\frac\willis\tau \div \bu
        &-& \div \bv
        &-& \frac\storage\tau p
        &=& \bar g  \quad \mbox{in } \Omega,
    \end{matrix}
\end{gather}
in every time step where $\bar g := g -\frac\willis\tau \div \bu_{\rm old} - \frac\storage\tau p_{\rm old}$ with $\bu_{\rm old}$ and
$p_{\rm old}$ taken from the previous time moment and all other quantities referring to the new time moment. 
We point out that the specific time discretization is irrelevant for this article, since all implicit methods will result in the necessity
of solving systems of similar kind. Furthermore, we note that $\bu/\tau$ has the unit of speed and hence it is meaningful to add
the quantities $(\div \bu)/\tau$, $\div \bv$, and $p/\tau$ in the last equation.

Following \cite{HongKraus19}, by proper scaling and substitution of variables, we can transform~\eqref{eq:strongproblemeuler}
into a symmetric problem.
First, from~\eqref{eq:strongproblemeuler} we have
\begin{gather*}
    \arraycolsep1pt
    \begin{matrix}
        -\div \strain \bu
        -
        \frac\lambda{2\mu} \nabla \div \bu
        &&&+&
        \frac\willis{2\mu} \nabla p
        &=&\frac1{2\mu} \bbf &\quad \mbox{in } \Omega,
        \\
        &&\frac\willis{2\mu}\permeability^{-1} \bv
        &+& \frac\willis{2\mu} \nabla p
        &=& \bm 0 &\quad \mbox{in } \Omega ,
        \\
        - \div \bu
        &-& \frac\tau\willis \div \bv
        &-& \frac\storage\willis p
        &=& \frac\tau\willis \bar g &\quad \mbox{in } \Omega.
    \end{matrix}
\end{gather*}
Next, by substituting
\begin{gather}
    \tilde{\bv} = \frac\tau\willis \bv,
    \quad
    \tilde{p} = \frac\willis{2\mu} p,
    \quad
    \tilde{\bbf} = \frac1{2\mu} \bbf,
    \quad
    \tilde{g} = \frac\tau\willis \bar g - \div \bu_{\rm old} - \frac\storage\willis p_{\rm old},
    \label{eq:rescaledvariables}
    \\
    \scaledpermeability^{-1} = \frac{\willis^2}{2\mu\tau}\permeability^{-1},
    \quad
    \scaledlambda = \frac\lambda{2\mu},
    \quad
    \scaledstorage = \frac{2\mu\storage}{\willis^2},
    \label{eq:rescaledparameters}
\end{gather}
we obtain the rescaled system
\begin{gather}
    \label{eq:strongproblemscaled}
    \arraycolsep1pt
    \begin{matrix}
        -\div\strain \bu
        +
        \scaledlambda \nabla\left(\div \bu \right)
        &&&+&
        \nabla \tilde{p}
        &=& \tilde{\bbf} &\quad \mbox{in } \Omega,
        \\
        &&\scaledpermeability^{-1} \tilde{\bv}
        &+&\nabla \tilde{p}
        &=& \bm 0 &\quad \mbox{in } \Omega,
        \\
        -\div \bu
        &-& \div \tilde{\bv}
        &-& \scaledstorage \tilde{p}
        &=& \tilde{g} &\quad \mbox{in } \Omega.
    \end{matrix}
\end{gather}
Since this is the normalized system we build our analysis on, we will omit the tilde symbol from here on, but we will keep the hat to clarify the scaling of the parameters.

Throughout this paper, we assume that the scaled permeability tensor $\scaledpermeability$ is bounded such that
\begin{gather} \label{eq:normequivalence_k}
    \scaledRmin \ltwonorm{\bv}^2 \leq \ltwoscal(\scaledpermeability^{-1} \bv,\bv) \leq \scaledRmax \ltwonorm{\bv}^2
    \quad\forall \bv\in [L^2(\Omega)]^2,
\end{gather}
for two positive constants $\scaledRmin$ and $\scaledRmax$.
Here, $\ltwonorm{\cdot}=\norm{\cdot}_{L^2(\Omega)}$ and $\ltwoscal(\cdot,\cdot)=\scal(\cdot,\cdot)_{L^2(\Omega)}$ denote the $L^2$-norm
and $L^2$-inner product on $\Omega$, respectively.
Note that condition~\eqref{eq:normequivalence_k} follows, e.g., from the pointwise condition
\begin{gather*}
  \scaledRmin \leq \eigenvalue_{\min}(\scaledpermeability^{-1}) \leq \eigenvalue_{\max}(\scaledpermeability^{-1}) \leq \scaledRmax,
  \qquad \text{a. e. in } \Omega,
\end{gather*}
where $\eigenvalue_{\min}(\scaledpermeability^{-1})$ and $\eigenvalue_{\max}(\scaledpermeability^{-1})$ denote (pointwise) the smallest and largest eigenvalues of $\scaledpermeability^{-1}$, respectively.
As a matter of fact, the constants in the analysis below depend on $\scaledRmax$ and $\scaledRmin$, so that it neither intends to cover high contrast coefficients nor the case of strong anisotropy.
In the simplest case, we consider a permeability tensor of the form $\scaledpermeability=\scaledkappa \bm I$ with a scalar $\scaledkappa$, constant over $\Omega$ and constant over the time interval $(0,T)$.

This system is closed by proper boundary conditions. As usual, essential boundary conditions enter the definitions
of the function spaces, here $U\subset [H^1(\Omega)]^2$ and $V\subset \Hdiv(\Omega)$ for displacement and seepage velocity,
respectively.
We will choose a particular combination of boundary conditions in the presentation and analysis of the two-level algorithm
in Sections~\ref{sec:twolevelalgorithm} and~\ref{sec:analysis}, although one might also want to consider other combinations
for which certain arguments/basic estimates have to be adapted/exchanged accordingly.
The general assumptions that are required to conduct an analysis similar to the one presented in this paper are as follows.

\begin{assumption}
\label{ass:biot:boundary}
  The boundary conditions on the spaces $U$ and $V$ are such that:
  \begin{enumerate}
      \item The seepage velocity $\bv$ is uniquely determined.
      \item Korn's inequality holds for $\bu$ in the continuous setting as well as for $\bu_h$ in the discrete setting. For sufficient conditions
      in the continuous case we refer to~\cite{DesvillettesVillani02} and the literature cited there. For the discrete case, see~\cite{Brenner04Korn}.
      \item The pressure space $Q$ is chosen such that the operator $\div\colon V\to Q$ is surjective. The boundary conditions on $U$ are compatible such that also $\div\colon U\to Q$ is surjective.
  \end{enumerate}
\end{assumption}

To give one example, which applies to the setting of the analysis presented in Section~\ref{sec:analysis}, Korn's inequality
\begin{align} \label{eq:korn}
    \frac{1}{c_e}\ltwonorm{\nabla \bu}^2 \leq \ltwonorm{\strain \bu}^2 \leq \ltwonorm{\nabla \bu}^2
\end{align}
holds for all $\bu\in [H^1_0(\Omega)]^2$.

In line with this, on the whole boundary we assume a homogeneous Dirichlet (no slip) boundary condition for $\bu$ and a Neumann boundary condition for $p$, which translates to an essential condition for $\bv\cdot\bn$.
Accordingly, we choose
\begin{gather*}
    U=[H^1_0(\Omega)]^2, \qquad V=\Hdiv_0(\Omega), \qquad Q=L^2_0(\Omega),
\end{gather*}
where

\begin{align*}
    \Hdiv_0(\Omega) &= \set{\bv\in\Hdiv(\Omega) : \bv\cdot\bn=0 \text{ on } \partial\Omega},
\end{align*}
in the sense of traces of $\Hdiv$ and
\begin{align*}
    L^2_0(\Omega) &= \set{q\in L^2(\Omega):\int_{\Omega}q\:dx=0}.
\end{align*}

The weak formulation of \eqref{eq:strongproblemscaled} is then:
find $(\bu,\bv,p)\in U\times V\times Q$, such that
\begin{gather} \label{eq:modelproblem}
    \arraycolsep1pt
    \begin{array}{rcccccc@{\hspace{3em}}l}
        \eeform(\bu,\bphi)
        +
        \scaledlambda\ddform(\bu,\bphi)
        &&&-&
        \divform(p,\bphi)
        &=&\ltwoscal(\bbf,\bphi)
        &\forall \bphi \in U,
        \\
        &&\kform(\bv,\bpsi)
        &-&\divform(p,\bpsi)
        &=& \bm 0
        &\forall \bpsi \in V,
        \\
        -\divform(q,\bu)
        &-& \divform(q,\bv)
        &-& \scaledstorage \ltwoscal(p,q)
        &=&\ltwoscal(g,q)
        &\forall q\in Q,
    \end{array}
\end{gather}
with bilinear forms
\begin{align*}
    \eeform(\bu,\bphi) &= \ltwoscal(\strain \bu,\strain \bphi), &
    \ddform(\bu,\bphi) &= \ltwoscal(\div \bu,\div \bphi), \\
    \divform(p,\bpsi) &= \ltwoscal(p,\div \bpsi), &
    \kform(\bv,\bpsi) &= \ltwoscal(\scaledpermeability^{-1} \bv,\bpsi).
\end{align*}

%%%%%%%%%%%%%%%%%%%%%%%%%%%%%%%%%%%%%%%%%%%%%%%%%%
\subsection{Discretization}
%%%%%%%%%%%%%%%%%%%%%%%%%%%%%%%%%%%%%%%%%%%%%%%%%%
The model problem~\eqref{eq:modelproblem} is discretized by a family of mixed strongly mass-conserving methods
as proposed in~\cite{HongKraus18,KanschatRiviere18,HongKraus19}, where the displacement and seepage velocity
fields are approximated in suitable $\Hdiv$-conforming spaces $U_h$ and $V_h$ and the pressure space  $Q_h$
consists of piecewise polynomial functions discontinuous at element interfaces such that the conditions
\begin{gather}\label{eq:massconservationcondition}
    \div U_h = Q_h, \qquad \div V_h = Q_h
\end{gather}
are satisfied.
The combined finite element space for the mixed method is then defined by $X_h := U_h\times V_h\times Q_h$.
Since the space $U_h$ is not $H^1$-conforming, a discrete interior penalty discontinuous Galerkin bilinear form
$\eehform(\cdot,\cdot)$ is used to approximate $\eeform(\cdot,\cdot)$ as detailed below.

Let $\mesh$ be a family of shape regular triangulations of the computational domain $\Omega$ into mesh cells $\cell$
with diameter $h_\cell=\diam(\cell)$ where $h:=\max_{\cell\in\mesh}{h_\cell}$ denotes the mesh size.
Let $\setoffaces$ be the set of all interior faces (edges in two dimensions) of $\mesh$, and $\setofboundaries$ be the set
of all faces on the boundary $\partial\Omega$.
For every $\face\in\setoffaces$ there are two neighboring cells $\cell_+,\cell_-\in\mesh$ such that
$\face=\partial\cell_+\cap\partial\cell_-$. Let $\bn$ be the unit outward normal vector pointing from $\cell_+$ to $\cell_-$.
Then on every face $\face \in \setoffaces$ and for any $\bphi \in [L^2(\Omega)]^2$ and $\btau \in [L^2(\Omega)]^{2 \times 2}$,
we define jump $\jump{\, }$ and average
$\mean{\, }$ by
\begin{gather*}
    \jump{\bphi} = \bphi_+ - \bphi_-, \qquad
    \mean{\btau \bn} = \frac12\left(\btau_+ + \btau_- \right) \bn,
\end{gather*}
where $\bphi_{\pm}=\bphi|_{\cell_\pm}$ and $\btau_{\pm}=\btau|_{\cell_\pm}$.
Further on the broken Sobolev space
\begin{gather*}
    [H^1(\Omega,\mesh)]^2 = \left\{ \bphi\in [L^2(\Omega)]^2 \;\middle\vert\; \bphi|_\cell \in [H^1(\cell)]^2\right\}
\end{gather*}
we introduce the discrete norm
\begin{gather*}
  %\label{eq:norm-1h}
    \dgnormone{\bphi}
    = \left(
    \sum_{\cell\in\mesh} \ltwonorm[\cell]{\nabla \bphi}^2
    + \sum_{\face\in\setoffaces} \frac1{h} \ltwonorm[\face]{\jump{\bphi}}^2
    + \sum_{\boundary\in\setofboundaries} \frac1{h} \ltwonorm[\boundary]{\bphi}^2
    \right)^{\frac12}.
\end{gather*}
On $U_h\times U_h$ we define the discrete bilinear form
\begin{align}
    \eehform(\bu_h,\bphi)
    =& \sum_{\cell\in\mesh} \ltwoscal[\cell](\strain{\bu_h},\strain{\bphi})
    + \sum_{\face\in\setoffaces} \frac\eta{h} \ltwoscal[\face](\jump{\bu_h},\jump{\bphi}) \nonumber\\
    &- \sum_{\face\in\setoffaces} \ltwoscal[\face](\mean{\strain{\bu_h} \bn},\jump{\bphi})
    - \sum_{\face\in\setoffaces} \ltwoscal[\face](\jump{\bu_h},\mean{\strain{\bphi} \bn}) \label{eq:eehform}\\
    &+ \sum_{\boundary\in\setofboundaries} \frac\eta{h} \ltwoscal[\boundary](\bu_h,\bphi)
    - \sum_{\boundary\in\setofboundaries} \ltwoscal[\boundary](\strain{\bu_h} \bn,\bphi)
    - \sum_{\boundary\in\setofboundaries} \ltwoscal[\boundary](\bu_h,\strain{\bphi} \bn). \nonumber
\end{align}
Here, the penalty parameter $\eta>0$ is chosen large enough to ensure coercivity of $\eehform(\cdot,\cdot)$, i.e.,
there is a positive constant $c$ such that
\begin{gather} \label{eq:coercivity_e}
    \eehform(\bphi,\bphi) \ge c \dgnormone{\bphi}^2 \quad\forall \bphi\in U_h.
\end{gather}
In addition we have continuity of $\eehform(\cdot,\cdot)$ in the norm $\dgnormone{\cdot}$, i.e.,
\begin{gather} \label{eq:continuity_e}
    \eehform(\bu_h,\bphi) \le c \dgnormone{\bu_h}  \dgnormone{\bphi} \quad\forall \bu_h,\bphi \in U_h.
\end{gather}
We note that since $U_h\subset \Hdiv(\Omega)$ 
the jumps in face terms in~\eqref{eq:eehform}
are zero in normal direction and hence can equivalently be defined using tangential components only. 

The mass-conserving mixed method based on the finite element space $X_h$ can then be represented as
\begin{gather} \label{eq:discreteproblem}
    \mixedform(\vvv{\bu_h}{\bv_h}{p_h},\vvv{\bphi}{\bpsi}{q})
    = \mixedrhs(\vvv{\bphi}{\bpsi}{q}),
\end{gather}
where the discrete bilinear form $\mixedform(\cdot,\cdot)\colon X_h\times X_h\to\R$ is defined by
\begin{equation}\label{eq:mixedform}
    \begin{aligned}
    \mixedform(\vvv{\bu_h}{\bv_h}{\ph},\vvv{\bphi}{\bpsi}{q})
    =& \eehform(\bu_h,\bphi) + \scaledlambda \ddform(\bu_h,\bphi) + \kform(\bv_h,\bpsi) \\
    & - \divform(p_h,\bphi+\bpsi)
    - \divform(q,\bu_h+\bv_h) - \scaledstorage \ltwoscal(p_h,q)
    \end{aligned}
\end{equation}
and the right hand side is given by
\begin{gather*}
    \mixedrhs(\vvv{\bphi}{\bpsi}{q}) = \ltwoscal(\bbf,\bphi) + \ltwoscal(g_h,q),
\end{gather*}
with $\gh$ chosen as the $L^2$-projection $\Ltwoprojection g$.
System \eqref{eq:discreteproblem} is consistent and uniformly well-posed, which has been proven in \cite{KanschatRiviere18,HongKraus18}
and follows essentially by the special choice of $\Hdiv$-conforming discretization spaces, coercivity \eqref{eq:coercivity_e},
continuity \eqref{eq:continuity_e} and the discrete inf-sup conditions,
see, e.g.~\cite{BoffiBrezziFortin13,HansboLarson02,SchoetzauSchwabToselli03,Brezzi74})
\begin{gather*} %\label{eq:infsup_u}
    \inf_{q\in Q_h} \sup_{\bphi\in U_h} \frac{\ltwoscal(\div \bphi,q)}{\dgnormone{\bphi} \ltwonorm{q}} \geq \constantinfsupu > 0
\end{gather*}
and
\begin{gather*} %\label{eq:infsup_v}
    \inf_{q\in Q_h} \sup_{\bpsi\in V_h} \frac{\ltwoscal(\div \bpsi,q)}{\Hdivnorm{\bpsi} \ltwonorm{q}} \geq \constantinfsupv > 0,
\end{gather*}
where
\begin{gather*}
    \Hdivnorm{\bpsi} = \left( \ltwonorm{\bpsi}^2 + \ltwonorm{\div\bpsi}^2 \right)^{\frac12}.
\end{gather*}

%%%%%%%%%%%%%%%%%%%%%%%%%%%%%%%%%%%%%%%%%%%%%%%%%%
%%%%%%%%%%%%%%%%%%%%%%%%%%%%%%%%%%%%%%%%%%%%%%%%%%
\section{Two-level algorithm}
\label{sec:twolevelalgorithm}
%%%%%%%%%%%%%%%%%%%%%%%%%%%%%%%%%%%%%%%%%%%%%%%%%%
%%%%%%%%%%%%%%%%%%%%%%%%%%%%%%%%%%%%%%%%%%%%%%%%%%

In this section, we define a monolithic two-level algorithm for the mixed problem~\eqref{eq:discreteproblem}.
The method differs from partitioned solvers which rely on and exploit the block structure of the system.

Let $\{\Omega_j\}_{j=1}^J$ be an overlapping covering of the domain $\Omega$ into subdomains $\Omega_j$ such that the following two standard assumptions are met, see~\cite{ToselliWidlund10,ArnoldFalkWinther97Hdiv,FengKarakashian01-1}.

\begin{assumption}[Finite covering]
\label{assumption:finite-covering}
The partition $\left\{\Omega_j\right\}_{j=1}^J$ can be colored using at most $N^c$ colors, in such a way that subregions with the same color are disjoint.
\end{assumption}

\begin{assumption}[Sufficient overlap]
\label{assumption:sufficient-overlap}
There is a constant $\delta>0$ that measures the size of overlaps between the subdomains $\Omega_j$, for $j=1,...,J$, such that
\begin{gather*}
    c_1 h\leq \delta \leq c_2 H
\end{gather*}
with constants $c_1>0$ and $c_2>0$.
\end{assumption}

In our experiments, we focus on the \emph{vertex patch smoother}, where the subdomains $\Omega_j$ can consist of all mesh cells sharing the vertex $j$, i.e., the vertex patch associated with vertex $j$.
In the special case of a uniform partition of the domain into rectangles this would be a union of $4$ mesh cells for interior vertices.

With the subdomains $\Omega_j$ we associate subspaces $X_j\subset X_h$.
To this end, we define
\begin{gather*}
    \begin{split}
        U_j := U_h\cap\Hdiv_0(\Omega_j),\quad
        V_j := V_h\cap\Hdiv_0(\Omega_j),\quad
        Q_j := Q_h\cap L^2_0(\Omega_j),
    \end{split}
\end{gather*}
continued by zero on $\Omega\setminus\Omega_j$, and set
\begin{gather*}
    X_j := U_j\times V_j\times Q_j.
\end{gather*}
Moreover, for $H>h$ let $\coarsemesh$ be a triangulation of the coarse domain $\Omega_0=\Omega$ such
that the associated subspace $X_0:=X_H \subset X_h$. Then, $X_0$ plays the role of a global coarse space and
we can decompose $X_h$ into an overlapping sum by
\begin{gather*}
    X_h=X_0 + \sum_{j=1}^J X_j.
\end{gather*}
For $j=1,\ldots,J$ we introduce local bilinear forms $\locmixedform(\cdot,\cdot)\colon X_j\times X_j\to \R$ as restrictions
of $\mixedform(\cdot,\cdot)$ to $X_j$ and define projections $\mixedP_j\colon X_h\to X_j$ by
\begin{gather}\label{eq:Pj}
    \locmixedform( \mixedP_j\vvv{\bu_h}{\bv_h}{\ph},\vvv{\bphi_j}{\bpsi_j}{q_j}) = \mixedform(\vvv{\bu_h}{\bv_h}{\ph},\vvv{\bphi_j}{\bpsi_j}{q_j})
    \qquad \forall \vvv{\bphi_j}{\bpsi_j}{q_j}\in X_j.
\end{gather}
Additionally, on the coarse space $X_0=X_H$, we introduce a coarse bilinear form $\locmixedform[0](\cdot,\cdot)\colon X_0\times X_0\to \R$ as $\locmixedform[0](\cdot,\cdot)=\coarsemixedform(\cdot,\cdot)$ and define $\mixedP_0\colon X_h\to X_0$ by
\begin{gather}\label{eq:P0}
    \locmixedform[0]( \mixedP_0\vvv{\bu_h}{\bv_h}{\ph},\vvv{\bphi_H}{\bpsi_H}{q_H}) = \mixedform(\vvv{\bu_h}{\bv_h}{\ph},\vvv{\bphi_H}{\bpsi_H}{q_H})
    \qquad \forall \vvv{\bphi_H}{\bpsi_H}{q_H}\in X_0.
\end{gather}
Note, that the bilinear form $\eeHform(\cdot,\cdot)$ is not inherited from $\eehform(\cdot,\cdot)$ although $X_H\subset X_h$, because it differs in the face and boundary terms.
Thus, $\mixedP_0$ is not a projection, since consequently $\locmixedform[0](\cdot,\cdot)$ differs from $\mixedform(\cdot,\cdot)$.

The bilinear form $\mixedform(\cdot,\cdot)\colon X_h\times X_h\to\R$ given in~\eqref{eq:mixedform} defines
an operator $\systemoperator := \systemoperator_h: X_h \rightarrow X'_h$ via the relation
\begin{gather*}%\label{eq:operatorA}
\langle \systemoperator \bx_h, \by_h \rangle_{X'_h \times X_h} = \mixedform(\bx_h, \by_h) \qquad \forall \bx_h, \by_h \in X_h,
\end{gather*}
where $X'_h$ denotes the dual space of $X_h$ and $\langle \cdot, \cdot \rangle_{X'_h \times X_h}$ the corresponding duality pairing.
With this notation we define the two-level additive Schwarz preconditioner $\Pad: X'_h \rightarrow X_h$ by
\begin{gather}\label{eq:additiveschwarz}
    \Pad = \relaxation \sum_{j=0}^J \mixedP_j \systemoperator^{-1},
\end{gather}
where $\relaxation$ is a relaxation factor depending on the overlap that is also used for tuning the method.
Further, the multiplicative two-level Schwarz preconditioner $\Pmu$ is defined by
\begin{align}
    \label{eq:multiplicativeschwarz}
    \Pmu &= \left(I - \Emu\right) \systemoperator^{-1},
    \quad
    \Emu = \left(I - \mixedP_J\right)\cdots\left(I - \mixedP_1\right)\left(I - \mixedP_0\right).
\end{align}
Note, that in practice only local problems are solved and the global inverse $\systemoperator^{-1}$ is actually never computed in the application of the preconditioners.

%%%%%%%%%%%%%%%%%%%%%%%%%%%%%%%%%%%%%%%%%%%%%%%%%%
%%%%%%%%%%%%%%%%%%%%%%%%%%%%%%%%%%%%%%%%%%%%%%%%%%
\section{Two-level convergence analysis}
\label{sec:analysis}
%%%%%%%%%%%%%%%%%%%%%%%%%%%%%%%%%%%%%%%%%%%%%%%%%%
%%%%%%%%%%%%%%%%%%%%%%%%%%%%%%%%%%%%%%%%%%%%%%%%%%

We begin this section by stating the main results:

\begin{theorem}
\label{theorem:additive}
Assume that $H \le c h$ for some positive constant $c$ and that the bounds~\eqref{eq:normequivalence_k} on $\scaledpermeability$ hold.
Then, the additive Schwarz method~\eqref{eq:additiveschwarz} defines a uniform preconditioner for the system~\eqref{eq:discreteproblem}.
Consequently, the GMRES method converges uniformly, i.e.,
it achieves a prescribed accuracy within a number of iterations
that is bounded independently of the mesh size $h$ and any other model parameters than $\scaledpermeability$, when applied to solve a linear system with $\Pad \systemoperator$.
\end{theorem}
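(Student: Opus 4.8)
The plan is to reduce the saddle-point problem~\eqref{eq:discreteproblem} to an equivalent symmetric positive definite (SPD) system, apply the abstract theory of two-level additive Schwarz methods for SPD problems, and then translate the spectral equivalence back to a field-of-values estimate for GMRES on the original indefinite operator. The key observation (following~\cite{Schoeberl99dissertation,LeeWuXuZikatanov07}) is that, because the pressure block contributes with the ``correct'' sign, one can eliminate $p_h$ or, more robustly, rewrite $\systemoperator$ in a transformed basis so that the resulting form is SPD on $X_h$ under a problem-dependent norm $\Whnorm{\cdot}$ built from $\eehform(\cdot,\cdot)+\scaledlambda\,\ddform(\cdot,\cdot)$ on the displacement, $\kform(\cdot,\cdot)$ together with $\ltwonorm{\div\cdot}^2$ on the flux, and the $L^2$-type contribution $\ltwonorm{\cdot}^2$ (plus $\scaledstorage$) on the pressure. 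This is exactly the norm in which the discrete system~\eqref{eq:discreteproblem} was shown to be uniformly well-posed in~\cite{HongKraus18,KanschatRiviere18}, so the transformation is norm-equivalent with constants independent of $\scaledlambda$, $\scaledstorage$, $\tau$, $h$.

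**Main steps.** First I would make the SPD transformation precise: introduce the transformed bilinear form $\singularform(\cdot,\cdot)$ on $X_h$ and show $\singularform(\bx_h,\bx_h)\simeq\Whnorm{\bx_h}^2$ uniformly, and that solving~\eqref{eq:discreteproblem} is equivalent to solving $\singularform(\bx_h,\by_h)=\singularrhs(\by_h)$; crucially the subspace projections $\mixedP_j$ and the preconditioner $\Pad$ in~\eqref{eq:additiveschwarz} are unchanged (or changed only in a controlled way) under this transformation, so a spectral bound for the transformed problem yields the field-of-values bound needed for GMRES on $\Pad\systemoperator$. Second, I would verify the two ingredients of the abstract additive Schwarz bound in the $\Whnorm{\cdot}$-norm: (i) a \emph{stable decomposition} — every $\bx_h\in X_h$ can be written as $\bx_h=\sum_{j=0}^J\bx_j$ with $\bx_j\in X_j$ and $\sum_j\Whnorm{\bx_j}^2\le C_0^2\,\Whnorm{\bx_h}^2$, $C_0$ independent of $h$ and the parameters; and (ii) a \emph{strengthened Cauchy–Schwarz / local stability} bound, which under \cref{assumption:finite-covering} reduces to the finite-coloring argument and local coercivity/continuity of $\locmixedform(\cdot,\cdot)$ on each $X_j$. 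Step (ii) is essentially routine given~\eqref{eq:coercivity_e}–\eqref{eq:continuity_e}; the coarse correction uses $X_0=X_H\subset X_h$ and the interpolation/quasi-interpolation operator onto $X_H$ that is stable in all three component norms (this is where $H\le ch$ enters to control the DG face terms, since $\eeHform$ is not inherited from $\eehform$). Finally, combining (i) and (ii) gives $\kappa(\Pad\systemoperator_{\mathrm{SPD}})\le C(N^c,C_0)$, and the back-transformation yields the uniform GMRES convergence claimed.

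**Main obstacle.** The hard part is the parameter-robust stable decomposition~(i). One must split a discrete function simultaneously in the displacement norm $\eehform+\scaledlambda\,\ddform$, the flux norm $\kform+\ltwonorm{\div\cdot}^2$, and the pressure norm — and the three components are coupled through the constraint structure $\div U_h=\div V_h=Q_h$ from~\eqref{eq:massconservationcondition}. In particular, when one localizes the displacement via a partition of unity subordinate to $\{\Omega_j\}$, the divergence $\div\bu_j$ changes by the product rule, and one must simultaneously adjust the pressure and flux pieces so that the local components still lie in $X_j$ and their norms are controlled by $\delta^{-1}$-type overlap factors (hence the need for \cref{assumption:sufficient-overlap}); the uniformity in $\scaledlambda$ forces one to work with a decomposition that does \emph{not} blow up the $\div$-contribution, which typically requires constructing the flux correction via a local $\Hdiv$-conforming right inverse of $\div$ and exploiting the discrete inf-sup stability of $\div\colon V_h\to Q_h$ restricted to each patch. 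I expect the bulk of the analysis to be devoted to making this patchwise $\Hdiv$ splitting work with constants independent of $\scaledRmax,\scaledRmin$ only up to the stated two-sided bound, as well as constructing the coarse interpolant that is simultaneously $\Whnorm{\cdot}$-stable and approximation-good. Once the additive result is established in this SPD framework, \cref{theorem:additive} follows immediately via the standard GMRES field-of-values estimate.
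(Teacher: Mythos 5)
Your overall architecture coincides with the paper's: transform to an SPD problem, verify a stable decomposition plus local stability/strengthened Cauchy--Schwarz, invoke the abstract Schwarz theory, and translate back to GMRES for $\Pad\systemoperator$. However, two points that you leave open are precisely where the substance lies, and as stated they are gaps. First, the SPD reformulation: your suggestion to ``rewrite $\systemoperator$ in a transformed basis so that the resulting form is SPD on $X_h$'' with the pressure retained is not substantiated --- with $p_h$ kept, the form is genuinely indefinite and no amount of norm-equivalence with the Brezzi well-posedness norm makes it SPD. The mechanism the paper uses is the exact, pointwise elimination of $p_h$ enabled by the strong mass-conservation property~\eqref{eq:massconservationcondition} ($\div U_h=\div V_h=Q_h$), which turns the discrete mass balance into an algebraic identity and yields a singularly perturbed SPD form on the reduced space $\Wh=U_h\times V_h$ containing the penalty term $\scaledstorage^{-1}\ddform(\bu_h+\bv_h,\bu_h+\bv_h)$; this is valid for every fixed $\scaledstorage>0$, and the theorem then follows because all Schwarz estimates are uniform as $\scaledstorage\to 0$ (this uniformity in the perturbation parameter is the actual content of the paper's short proof of the theorem and is absent from your outline).

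Second, the stable decomposition, which you correctly identify as the main obstacle, is only gestured at, and the mechanism you propose (partition of unity for the displacement plus a local $\Hdiv$-conforming right inverse of $\div$ with patchwise inf-sup) does not obviously control the $\scaledstorage^{-1}$-weighted \emph{coupled} divergence term uniformly: splitting $\bu$ and $\bv$ separately lets $\div(\bu_j+\bv_j)$ blow up as $\scaledstorage\to 0$ even if each local divergence is tame. The paper's construction (\cref{lemma:StableDecomp}) avoids this by the specific splitting $\bw=(\bu^0,\bv^0)+(\bm 0,\bphi^0)+(\bu^\perp,-\bu^\perp)+(\bm 0,\bphi^\perp)$ with $\bphi=\bu^\perp+\bv^\perp$: mirroring $\bu^\perp$ into the velocity slot annihilates the $\scaledstorage^{-1}$ term on that component, and the entire coupled divergence is carried by a single velocity field, while the divergence-free parts are decomposed via two-dimensional stream functions ($\curl s$), reducing the elasticity block to the known $C^0$-interior-penalty/$H^2$ decomposition of Brenner--Wang and the Darcy block to a classical $H^1$ decomposition --- which is also where the dependence on $\scaledRmax/\scaledRmin$ enters. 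Without such a device (or a worked-out alternative), your step (i) remains unproven, so the proposal is a plausible roadmap rather than a proof; note also that the paper's argument is specific to $d=2$ through the stream-function identity, a restriction your sketch does not flag.
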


\begin{theorem}
\label{theorem:multiplicative}
   Under the assumptions of \cref{theorem:additive} the stationary iterative method based on the multiplicative Schwarz  
   preconditioner~\eqref{eq:multiplicativeschwarz} converges uniformly, i.e., with a residual contraction factor strictly less than $1$, independent of the mesh size $h$ and any other model parameters than $\scaledpermeability$.
\end{theorem}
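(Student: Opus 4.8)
The plan is to reduce both preconditioners to the analysis of a \emph{symmetric positive definite} problem obtained from the saddle-point system~\eqref{eq:discreteproblem} by the classical change of variables going back to Sch\"oberl and to Lee--Wu--Xu--Zikatanov. Concretely, one flips the sign of the pressure block (or, equivalently, multiplies the third test equation by $-1$) and augments the displacement/flux equations by the penalty terms $\scaledlambda\,\ddform(\cdot,\cdot)$ and $\kform(\cdot,\cdot)$ already present, so that the transformed bilinear form $\singularform(\cdot,\cdot)$ becomes SPD on $X_h$, uniformly in $\scaledlambda$, $\scaledstorage$, $\tau$, $\mu$ and $h$, but \emph{singularly perturbed} as $\scaledlambda\to\infty$ or $\scaledRmin\to 0$. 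The key point is that $\mixedform$ and $\singularform$ have the same kernel structure and the local solvers $\mixedP_j$ in~\eqref{eq:Pj} coincide with the $\singularform$-orthogonal projections onto $X_j$; hence the multiplicative Schwarz error operator $\Emu$ is exactly the product of $\singularform$-orthogonal projections $(I-\singularP_j)$, and the standard Schwarz theory for SPD problems applies verbatim once the two defining properties below are established.

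The two ingredients are: (i) a \textbf{stable decomposition}: every $\bx_h\in X_h$ admits a splitting $\bx_h=\sum_{j=0}^J \bx_j$ with $\bx_j\in X_j$ and $\sum_{j=0}^J \singularform(\bx_j,\bx_j)\le C_0^2\,\singularform(\bx_h,\bx_h)$, with $C_0$ independent of $h$ and the model parameters (but depending on $\scaledRmax,\scaledRmin$ through~\eqref{eq:normequivalence_k}); and (ii) a \textbf{local stability / strengthened Cauchy--Schwarz bound}, which for exact local solvers is immediate with constant $\omega=1$, together with the finite-overlap colouring of \cref{assumption:finite-covering} giving $\rho(\mathcal E)\le N^c+1$. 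Granting (i) and (ii), the abstract Schwarz convergence theorem (see, e.g., \cite{ToselliWidlund10}) yields for the \emph{additive} operator the spectral bounds $C_0^{-2}\le \lambda_{\min}(\Pad\systemoperator)$, $\lambda_{\max}(\Pad\systemoperator)\le \relaxation(N^c+1)$ in the $\singularform$-inner product, proving \cref{theorem:additive} (the GMRES statement then follows from field-of-values equivalence of $\systemoperator$ and the SPD operator, as in the operator-preconditioning framework cited in the introduction); and for the \emph{multiplicative} operator the contraction estimate
\begin{gather*}
  \singularform(\Emu\bx_h,\Emu\bx_h)\le \left(1-\frac{2-\relaxation_0}{C_0^2\,(1+N^c)^2}\right)\singularform(\bx_h,\bx_h),
\end{gather*}
a contraction factor strictly less than $1$ depending only on $C_0$, $N^c$, which is exactly the assertion of \cref{theorem:multiplicative}.

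So the real work is proving the stable decomposition (i) with a parameter-robust $C_0$. Here the plan is the usual two-stage construction. First, split off the coarse part: given $\bx_h=(\bu_h,\bv_h,p_h)$, let $\bx_0=(\bu_0,\bv_0,p_0)\in X_0=X_H$ be a suitable \emph{Clément/Scott--Zhang-type} quasi-interpolant onto the coarse space, chosen so that $\div\bu_0$ and $\div\bv_0$ realize the $L^2$-projection of $p_h$-related quantities onto $Q_H$ (this is where the mass-conservation condition~\eqref{eq:massconservationcondition} and the commuting-diagram property of the $\Hdiv$-conforming interpolants are used — they let one control the singularly perturbed terms $\scaledlambda\ddform$ and the pressure coupling without picking up $\scaledlambda$ or $\scaledstorage$). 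Then the remainder $\bx_h-\bx_0$ has small support-overlap and is distributed to the vertex patches $X_j$ by a partition-of-unity argument in the broken $\dgnormone{\cdot}$ norm for the displacement, in the $\Hdiv$-norm for the flux, and in $L^2$ for the pressure; \cref{assumption:sufficient-overlap} ($\delta\gtrsim h$, $\delta\lesssim H$) is what makes the weighted $h/\delta$ and $H/\delta$ factors harmless, and Korn's inequality (\cref{ass:biot:boundary}, item~2, in the discrete DG form) is needed to pass between $\eehform$ and $\dgnormone{\cdot}^2$.

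\textbf{The main obstacle} is precisely making the stable-decomposition constant $C_0$ robust with respect to the nearly incompressible limit $\scaledlambda\to\infty$ and the low-permeability regime $\scaledRmin\to 0$ \emph{simultaneously}. Naively, the partition-of-unity cut-off destroys the divergence-free/divergence-matching structure and reactivates the large $\scaledlambda\ddform$ term; the fix is to perform the local splittings not on $\bu_h$ directly but on a $\div$-preserving modification (adding a local $\Hdiv$-conforming correction in $U_j$ with prescribed divergence, available by the local inf-sup condition), so that every local piece carries the correct amount of divergence and the penalty contributions telescope. The flux variable needs the analogous treatment using the $\Hdiv$-stable decomposition of Arnold--Falk--Winther; the coupling of the two through the shared pressure is what forces the coarse interpolant to be chosen consistently for $\bu$ and $\bv$ at once. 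Once this structure-preserving local splitting is in place, the per-term estimates are routine, and the dependence of $C_0$ on $\scaledRmax/\scaledRmin$ enters only through the equivalence~\eqref{eq:normequivalence_k} used to compare $\kform$ with $\ltwonorm{\cdot}^2$ — consistent with the theorem's proviso that robustness is claimed in all parameters \emph{except} $\scaledpermeability$.
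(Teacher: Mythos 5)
Your overall strategy coincides with the paper's (reduce to a singularly perturbed SPD problem and invoke the abstract Schwarz theory of Toselli--Widlund), but there are two genuine gaps. First, the reduction itself is mis-described: flipping the sign of the pressure equation does not make the three-field form SPD on $X_h$ -- it stays a (possibly nonsymmetric) saddle-point form. What the paper actually does is eliminate the pressure \emph{pointwise}, using the strong mass-conservation property \eqref{eq:massconservationcondition} and $\scaledstorage>0$, to obtain a two-field form $\singularform(\cdot,\cdot)$ on $W_h=U_h\times V_h$ containing the term $\scaledstorage^{-1}\ddform(\bu_h+\bv_h,\bphi+\bpsi)$; uniformity of all estimates as $\scaledstorage\to 0$ is then what transfers the result back to \eqref{eq:discreteproblem}. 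Your sketch never produces this coupled divergence term, and consequently your stable-decomposition plan never confronts the dominant difficulty: the weight $\scaledstorage^{-1}$ blows up, so any partition-of-unity cut-off that does not respect the \emph{sum} $\bu+\bv$ (not just $\div\bu$) destroys robustness. Your proposed fix -- a ``div-preserving local correction available by the local inf-sup condition so that the penalty contributions telescope'' -- addresses at best the $\scaledlambda\ddform(\bu,\bu)$ term, and is in any case asserted rather than proved; it is essentially the content of the key lemma, not an argument for it. The paper's route is different and concrete: Helmholtz-decompose $U_h$ and $V_h$, treat the divergence-free parts via stream functions and the $C^0$-interior-penalty biharmonic decomposition of Brenner--Wang together with an $H^1$ decomposition for the Darcy kernel (\cref{lemma:decomposition_kernel}, which is also where the two-dimensionality enters), and -- crucially -- use the special four-term splitting $\bw=(\bu^0,\bv^0)+({\bm 0},\bphi^0)+(\bu^\perp,-\bu^\perp)+({\bm 0},\bphi^\perp)$ with $\bphi=\bu^\perp+\bv^\perp$ in \cref{lemma:StableDecomp}: the block $(\bu^\perp,-\bu^\perp)$ carries no $\scaledstorage^{-1}$ weight at all, and the entire coupled divergence is shifted into the single flux component $\bphi^\perp$, which admits a parameter-robust $\Hdiv$-stable decomposition. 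Some device of this kind is indispensable, and it is missing from your proposal.

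A secondary inaccuracy: you assume all local solvers, including the coarse one, are exact, giving local stability with constant $1$ and the clean contraction bound you quote. In the paper the coarse form is \emph{not} inherited (the penalty terms of $\eeHform(\cdot,\cdot)$ scale with $1/H$ rather than $1/h$, see \eqref{eq:P0}), so the local stability constant is $H/h$ (\cref{lemma:SPP-localstability}), and the multiplicative theory additionally requires $\localstabconstant<2$, which in general forces a relaxation of the coarse solve (\cref{remark:relaxation-multiplicative}). Under the hypothesis $H\le c h$ of \cref{theorem:additive} this only changes constants, but your argument as written is inconsistent with the preconditioner actually defined in \eqref{eq:multiplicativeschwarz}.
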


\begin{remark}
The dependence of the presented two-level convergence estimates on the parameter $\scaledpermeability$ is caused by the same dependence of the constant in the stable decomposition of the divergence-free subspace $V_h^0$ of $V_h$ under the norm induced by the bilinear form $\kform(\cdot,\cdot)$, see~\eqref{eq:ltwo_decomp_hdivconf} in the proof of Lemma~\ref{lemma:decomposition_kernel}.
We will use a very crude upper bound at this point, which is given by the global contrast $\scaledRmax/\scaledRmin$ for reasons of simplicity because this aspect is not the focus of the presented analysis.
Improved bounds for Darcy have been developed based on weighted Poincar\'e inequalities under the assumption of certain quasi-monotonicity properties of the coefficient function $\scaledpermeability^{-1}$ in  $\kform(\cdot,\cdot)$, see~\cite{Pechstein2011weighted,Scheichl2012multilevel,Pechstein2013weighted}. 
\end{remark}

Due to the lack of an abstract convergence theory for subspace correction methods for indefinite problems, 
we follow~\cite{Schoeberl99dissertation} and transform the saddle point problem into an equivalent, singularly perturbed 
symmetric positive definite (SPD) system, which is possible for any fixed storage coefficient $\scaledstorage > 0$, as 
detailed in Section~\ref{sec:singularperturbedproblem}.

The analysis of domain decomposition methods for this singularly perturbed problem requires a stable decomposition derived in subsections~\ref{sec:space-decomposition} and~\ref{sec:stabledecomposition}.
Thus, the convergence with respect to the singularly perturbed problem follows by the theory for symmetric positive definite problems, which is proved in \cref{section:SPP-convergence}.
Finally, the proofs of \cref{theorem:additive,theorem:multiplicative} can be found in subsection~\ref{sec:proof}. They rely on the fact that the estimates for the singularly perturbed problem are uniform for $\scaledstorage \to 0$.

%%%%%%%%%%%%%%%%%%%%%%%%%%%%%%%%%%%%%%%%%%%%%%%%%%
\subsection{An equivalent, singularly perturbed problem}\label{sec:singularperturbedproblem}
%%%%%%%%%%%%%%%%%%%%%%%%%%%%%%%%%%%%%%%%%%%%%%%%%%
As shown in \cite{HongKraus19}, strong mass conservation can be recovered from system~\eqref{eq:discreteproblem}
by using $\Hdiv$-conforming discretizations for the displacement field and seepage velocity in combination with piecewise
polynomial approximations of the pressure if the polynomial degrees match. To be more precise, the discrete mass balance
equation
\begin{gather*}
    -\div \bu_h - \div \bv_h -\scaledstorage \ph = \gh
\end{gather*}
is fulfilled point-wise if the condition~\eqref{eq:massconservationcondition} is met.
This allows us to substitute
\begin{gather*}
    \ph = - \scaledstorage^{-1} \left(\div \bu_h + \div \bv_h + \gh\right)
\end{gather*}
and obtain the equivalent, singularly perturbed system
\begin{equation*}
\begin{aligned}
    \eehform(\bu_h,\bphi) + \scaledlambda \ddform(\bu_h,\bphi) + \scaledstorage^{-1}\ddform(\bu_h+\bv_h,\bphi) &= \ltwoscal(\bbf,\bphi) - \scaledstorage^{-1} \ltwoscal(\gh,\div\bphi)\\
    \kform(\bv_h,\bpsi) +  \scaledstorage^{-1} \ddform(\bu_h+\bv_h,\bpsi) &= - \scaledstorage^{-1} \ltwoscal(\gh,\div \bpsi)
\end{aligned}
\end{equation*}
for all $(\bphi,\bpsi)\in \Wh = U_h\times V_h$.
We rewrite this system as
\begin{gather} \label{eq:singularproblem}
    \singularform(\vv{\bu_h}{\bv_h},\vv{\bphi}{\bpsi})
    = \singularrhs(\vv{\bphi}{\bpsi})
\end{gather}
with
\begin{gather}\label{eq:SPP-discretebilinearform}
    \singularform(\vv{\bu_h}{\bv_h},\vv{\bphi}{\bpsi})
    = \Ehform(\vv{\bu_h}{\bv_h},\vv{\bphi}{\bpsi})
    + \Dform(\vv{\bu_h}{\bv_h},\vv{\bphi}{\bpsi}),
\end{gather}
where the bilinear forms $\Ehform(\cdot,\cdot)$ and $\Dform(\cdot,\cdot)$
are defined on $\Wh\times\Wh$ by
\begin{align*}
    \Ehform(\vv{\bu_h}{\bv_h},\vv{\bphi}{\bpsi})
    &= \eehform(\bu_h,\bphi) + \kform(\bv_h,\bpsi), %\label{eq:discreteformA}
    \\
    \Dform(\vv{\bu_h}{\bv_h},\vv{\bphi}{\bpsi})
    &= \scaledlambda \ddform(\bu_h,\bphi) + \scaledstorage^{-1} \ddform(\bu_h+\bv_h,\bphi+\bpsi) %\label{eq:discreteformD}
\end{align*}
and the right hand side is given by
\begin{gather*}
    \singularrhs(\vv{\bphi}{\bpsi}) = \ltwoscal(\bbf,\bphi) - \scaledstorage^{-1} \ltwoscal(\gh,{\div(\bphi + \bpsi)}).
\end{gather*}

\begin{remark}
  \label{lemma:biot:posdef}
  The form $\singularform(\cdot,\cdot)$ is symmetric and coercive with respect to the weighted norm
  \begin{gather*}
      \Whnorm{\bw_h} = \left(\dgnormone{\bu_h}^2 + \scaledlambda \ltwonorm{\div\bu_h}^2 + \ltwonorm{\scaledpermeability^{-\nfrac12}\bv_h}^2 + \scaledstorage^{-1} \ltwonorm{\div (\bu_h+\bv_h)}^2 \right)^{\frac12}
  \end{gather*}
  for all $\bw_h=(\bu_h,\bv_h)\in\Wh$, i.e.,
  \begin{gather*}
    \singularform(\vv{\bu_h}{\bv_h},\vv{\bu_h}{\bv_h})
    \geq c\Whnorm{\bw_h}^2.
  \end{gather*}
\end{remark}

Similar to \cref{sec:twolevelalgorithm} we choose local subspaces $\Wj \subset\Wh$ associated with the subdomains $\Omega_j$,
such that
\begin{gather*}
    \Wj = U_j\times V_j
\end{gather*}
as well as a global coarse space $W_0=U_H\times V_H$. Hence, every $\bw\in \Wh$ admits a decomposition of the form
\begin{gather}\label{eq:decomposition_w}
    \bw = \sum_{j=0}^J \bw_j,
    \qquad w_j\in \Wj.
\end{gather}

For $j=1,\ldots,J$ we introduce the local bilinear forms $\locsingularform(\cdot,\cdot)\colon\Wj\times\Wj\to\R$ as restrictions of
$\singularform(\cdot,\cdot)$ to $\Wj$ and a coarse bilinear form $\locsingularform[0](\cdot,\cdot)\colon W_0\times W_0\to\R$
as $\locsingularform[0](\cdot,\cdot)=\coarsesingularform(\cdot,\cdot)$. Analogously to~\eqref{eq:Pj} and~\eqref{eq:P0} we define
the projections $\singularP_j\colon W_h\to W_j$ by
\begin{gather*}%\label{eq:Pj_sp}
    \locsingularform( \singularP_j\vv{\bu_h}{\bv_h},\vv{\bphi_j}{\bpsi_j}) = \singularform(\vv{\bu_h}{\bv_h},\vv{\bphi_j}{\bpsi_j})
    \qquad \forall \vv{\bphi_j}{\bpsi_j}\in W_j
\end{gather*}
and the projection-like operator $\singularP_0\colon W_h\to W_0$ by
\begin{gather*}%\label{eq:P0_sp}
    \locsingularform[0]( \singularP_0\vv{\bu_h}{\bv_h},\vv{\bphi_H}{\bpsi_H}) = \coarsesingularform(\vv{\bu_h}{\bv_h},\vv{\bphi_H}{\bpsi_H})
    \qquad \forall \vv{\bphi_H}{\bpsi_H}\in W_0.
\end{gather*}

Furthermore, the bilinear form $\singularform(\cdot,\cdot)\colon \Wh\times \Wh\to\R$ given in~\eqref{eq:SPP-discretebilinearform} defines
an operator $\singularoperator:=\singularoperator_h: \Wh \rightarrow \Wh[']$ via the relation
\begin{gather*}%\label{eq:SPP-operatorB}
    \langle \singularoperator \bx_h, \by_h \rangle_{\Wh['] \times \Wh} = \singularform(\bx_h, \by_h), \quad \forall \bx_h, \by_h \in \Wh.
\end{gather*}

Thus, for the singularly perturbed problem the two-level additive Schwarz preconditioner $\singularPad: \Wh['] \rightarrow \Wh$ is defined by
\begin{gather} \label{eq:SPP-additiveschwarz}
    \singularPad = \relaxation \sum_{j=0}^J \singularP_j \singularoperator^{-1},
\end{gather}
and the multiplicative two-level Schwarz preconditioner $\singularPmu$ is defined by
\begin{align}
    \label{eq:SPP-multiplicativeschwarz}
    \singularPmu &= \left(I - \singularEmu\right) \singularoperator^{-1},
    \quad
    \singularEmu = \left(I-\singularP_J\right)\cdots\left(I-\singularP_1\right)\left(I-\singularP_0\right).
\end{align}

%%%%%%%%%%%%%%%%%%%%%%%%%%%%%%%%%%%%%%%%%%%%%%%%%%
\subsection{Decomposition of the spaces according to \texorpdfstring{$\kerDform$}{ker(D)}}
\label{sec:space-decomposition}
%%%%%%%%%%%%%%%%%%%%%%%%%%%%%%%%%%%%%%%%%%%%%%%%%%
Our goal is to prove that for any $\bw \in W_h$ any decomposition of the form~\eqref{eq:decomposition_w}  
is stable under the global and local bilinear forms $\singularform(\cdot,\cdot) \colon W_h \times W_h\to\R$
and $\locsingularform(\cdot,\cdot)\colon\Wj\times\Wj\to\R$, respectively.
As a preparatory, step we decompose the space $\Wh$ into the orthogonal sum
\begin{align*}
    \Wh = \kerDform \oplus \kerDform^\perp
\end{align*}
of the kernel of the summed divergence operator
\begin{gather*}
    W_h^0 := \kerDform = \left\{\vv{\bu}{\bv}\in W_h\middle|\; \Dform(\vv{\bu}{\bv},\vv{\bphi}{\bpsi}) = 0 \quad\forall \vv{\bphi}{\bpsi} \in W_h\right\}
\end{gather*}
and its $\Ehform(\cdot,\cdot)$-orthogonal complement
\begin{gather*}
    \kerDform^\perp = \left\{\vv{\bu}{\bv}\in W_h \middle|\; \Ehform(\vv{\bu}{\bv},\vv{\bphi}{\bpsi})=0\quad \forall\vv{\bphi}{\bpsi}\in\kerDform\right\},
\end{gather*}
cf.~\cite{ArnoldFalkWinther97Hdiv,Hiptmair97}.
To this end, we introduce the discrete Helmholtz-decompo\-sitions
\begin{gather*}
    U_h = U_h^0 \oplus U_h^\perp,
    \qquad
    V_h = V_h^0 \oplus V_h^\perp, %\label{eq:discreteHelmholtzUhVh}
\end{gather*}
of $U_h$ and $V_h$ into their divergence free subspaces $U_h^0$ and $V_h^0$
and their $\eehform(\cdot,\cdot)$-ortho\-gonal and $\kform(\cdot,\cdot)$-orthogonal
complements, respectively,
\begin{align*}
    U_h^\perp &= \left\{ \bu\in U_h \middle|\; \eehform(\bu,\bphi)=0\quad\forall \bphi\in U_h^0\right\},\\
    V_h^\perp &= \left\{ \bv\in V_h \middle|\; \kform(\bv,\bpsi)=0\quad\forall\bpsi\in V_h^0\right\},
\end{align*}
cf.~\cite{KanschatMao15,ArnoldFalkWinther10}.
Accordingly, we denote the Helmholtz-decompositions of the local and coarse spaces $U_j$ and $V_j$ into their divergence-free subspaces and their $\eehform(\cdot,\cdot)$-orthogo\-nal and $\kform(\cdot,\cdot)$-orthogonal complements by
\begin{gather*}
    U_j = U_j^0 \oplus U_j^\perp,
    \qquad
    V_j = V_j^0 \oplus V_j^\perp.
\end{gather*}

The following lemma characterizes $W_h^0 = \kerDform \subset W_h$.
\begin{lemma}
\label{lemma:decompproperties}
For $\scaledlambda>0$ there holds
\begin{gather*}
    \Wh[0] := \kerDform = U_h^0 \times V_h^0,
    \qquad
    \Wh[\perp] := \kerDform^\perp = U_h^\perp \times V_h^\perp.
\end{gather*}
\end{lemma}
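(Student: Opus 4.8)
The plan is to prove both identities directly, using only the mass-conservation property~\eqref{eq:massconservationcondition} (surjectivity of the discrete divergence onto $Q_h$) and the strict positivity $\scaledlambda>0$; no approximation estimates are needed at this stage.

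For the first identity $\kerDform = U_h^0\times V_h^0$, the inclusion ``$\supseteq$'' is immediate, since $\div\bu=0$ and $\div\bv=0$ make every term of $\Dform(\cdot,\cdot)$ vanish. For the reverse inclusion I would start from $(\bu,\bv)\in\kerDform$ and test with the special pairs $(\bm 0,\bpsi)$, $\bpsi\in V_h$: this yields $\scaledstorage^{-1}\ltwoscal(\div\bu+\div\bv,\div\bpsi)=0$, and since $\div V_h=Q_h$ by~\eqref{eq:massconservationcondition} and $\div\bu+\div\bv\in Q_h$, one may pick $\bpsi$ with $\div\bpsi=\div\bu+\div\bv$ to conclude $\div\bu+\div\bv=0$. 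Feeding this back and testing with $(\bphi,\bm 0)$, $\bphi\in U_h$, leaves $\scaledlambda\ltwoscal(\div\bu,\div\bphi)=0$; choosing $\bphi$ with $\div\bphi=\div\bu$ (possible because $\div U_h=Q_h$) and dividing by $\scaledlambda>0$ forces $\div\bu=0$, hence also $\div\bv=0$. This is the single point where the hypothesis $\scaledlambda>0$ enters.

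For the second identity $\kerDform^\perp = U_h^\perp\times V_h^\perp$, I would exploit that $\Ehform(\cdot,\cdot)$ is block-diagonal in $\bu$ and $\bv$, combined with the first identity, which tells us the test space $\kerDform$ equals the product $U_h^0\times V_h^0$. Thus $(\bu,\bv)\in\kerDform^\perp$ is equivalent to $\eehform(\bu,\bphi)+\kform(\bv,\bpsi)=0$ for all $\bphi\in U_h^0$ and all $\bpsi\in V_h^0$; taking $\bpsi=\bm 0$ and then $\bphi=\bm 0$ decouples this into $\eehform(\bu,\bphi)=0$ for all $\bphi\in U_h^0$ and $\kform(\bv,\bpsi)=0$ for all $\bpsi\in V_h^0$, i.e.\ $\bu\in U_h^\perp$ and $\bv\in V_h^\perp$, and the converse inclusion is immediate by reversing the same computation. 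One should note in passing that coercivity~\eqref{eq:coercivity_e} of $\eehform(\cdot,\cdot)$ together with~\eqref{eq:normequivalence_k} for $\kform(\cdot,\cdot)$ makes $\Ehform(\cdot,\cdot)$ an inner product on $W_h$, so that $\kerDform^\perp$ and the splitting $W_h=\kerDform\oplus\kerDform^\perp$ are well defined.

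I do not expect any real obstacle here; the proof is a short chain of ``divergence is surjective onto $Q_h$'' together with well-chosen test functions. The only subtlety worth flagging is that $\scaledlambda>0$ cannot be dropped: for $\scaledlambda=0$ one could only force $\div\bu+\div\bv=0$, not $\div\bu=0$ and $\div\bv=0$ separately, so the kernel would then be strictly larger than $U_h^0\times V_h^0$.
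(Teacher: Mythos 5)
Your proof is correct, but it takes a slightly different route than the paper. The paper proves the kernel identity by testing $\Dform(\cdot,\cdot)$ with the element itself and a sign variant: choosing $\bphi=\bu$, $\bpsi=\bv$ gives $\scaledlambda\ltwonorm{\div\bu}^2+\scaledstorage^{-1}\ltwonorm{\div(\bu+\bv)}^2=0$, which already forces $\div\bu=0$, and then $\bphi=\bu$, $\bpsi=-\bv$ yields $\div\bv=0$; this uses only the positive semidefiniteness of $\Dform(\cdot,\cdot)$ and $\scaledlambda,\scaledstorage^{-1}>0$, and in particular does not need the surjectivity of the discrete divergence. You instead construct off-diagonal test functions via $\div U_h=\div V_h=Q_h$ from~\eqref{eq:massconservationcondition}, first isolating $\div\bu+\div\bv=0$ and then $\div\bu=0$. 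This is valid (the mass-conservation condition is a standing assumption of the discretization), but it is marginally longer and makes the lemma appear to depend on~\eqref{eq:massconservationcondition} when it does not; the diagonal test is the leaner argument. On the other hand, your write-up adds two things the paper leaves implicit: the explicit decoupling argument for $\kerDform^\perp=U_h^\perp\times V_h^\perp$ using the block-diagonal structure of $\Ehform(\cdot,\cdot)$ and the product structure of the kernel, and the observation that coercivity~\eqref{eq:coercivity_e} together with~\eqref{eq:normequivalence_k} makes $\Ehform(\cdot,\cdot)$ an inner product so the orthogonal splitting of $\Wh$ is well defined; your closing remark that $\scaledlambda>0$ cannot be dropped (only $\div(\bu+\bv)=0$ would follow) is also accurate.
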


\begin{proof}
Setting $\bphi=\bu$ and $\bpsi=\bv$ in
  \begin{gather*}
      \Dform(\vv{\bu}{\bv},\vv{\bphi}{\bpsi}) = \scaledlambda\ddform(\bu,\bphi)
    + \scaledstorage^{-1}\ddform(\bu+\bv,\bphi+\bpsi)
  \end{gather*}
shows that a necessary condition for $(\bu,\bv)^T \in \kerDform$ is $\div \bu = 0$, i.e., $\bu\in U_h^0$.
Further, for any $\bu\in U_h^0$, choosing $\bphi=\bu$ and $\bpsi=-\bv$ shows that in this case $\bv \in V_h^0$
is necessary and sufficient to guarantee that $(\bu,\bv)^T \in \Wh[0]$.
\end{proof}

Note, that for all $\bw_h^0 = (\bu_h^0,\bv_h^0) \in \Wh[0]$ and $\bpi_h = (\bphi_h,\bpsi_h)\in\Wh$, there holds
\begin{gather}
  \label{eq:globaldecompkernel}
  \singularform(\bw_h^0, \bpi_h) =
  \eehform(\bu_h^0,\bphi_h) + \kform(\bv_h^0,\bpsi_h).
\end{gather}

%%%%%%%%%%%%%%%%%%%%%%%%%%%%%%%%%%%%%%%%%%%%%%%%%%
\subsection{Stable decompositions}
\label{sec:stabledecomposition}
%%%%%%%%%%%%%%%%%%%%%%%%%%%%%%%%%%%%%%%%%%%%%%%%%%
The main result of this section is \cref{lemma:StableDecomp}. Before we can prove it, we begin by recalling known facts from the literature
and showing some auxiliary results.
We start with the existence and some standard properties of a partition of unity for the overlapping covering $\{\Omega_j\}_{j=1}^J$ in \cref{prop:partition-of-unity}, see~\cite[Lemma 3.4]{ToselliWidlund10}.

\begin{proposition}
\label{prop:partition-of-unity}
Given \cref{assumption:finite-covering,assumption:sufficient-overlap}, there is a piecewise bilinear
partition of unity $\left\{\theta_j\right\}_{j=1}^J$ relative to the overlapping partition $\left\{\Omega_j\right\}_{j=1}^J$, i.e.,
$\operatorname{supp}(\theta_j) \subset \overline{\Omega_j}$, such that for all $x\in\overline{\Omega}$ there holds
\begin{equation} \label{eq:partitionofunity}
    \sum_{j=1}^J \theta_j(x) = 1, \quad \mbox{with } 
    0 \le \theta_j(x) \le 1,  \quad \mbox{for all } j=1,2,\ldots,J.
\end{equation}
Moreover,
\begin{equation*} %\label{eq:partitionofunity_bound}
    \linftynorm[(\Omega)]{\nabla \theta_j} \le \frac{c}{\delta},
\end{equation*}
where $\delta$ is the constant from \cref{assumption:sufficient-overlap}.
\end{proposition}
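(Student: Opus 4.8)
The statement is classical — it is essentially \cite[Lemma~3.4]{ToselliWidlund10}, up to the cosmetic requirement that the cut-off functions lie in the piecewise bilinear finite element space — so the plan is to recall the standard construction and then project it into that space. Concretely, I would proceed in two steps. First, I build an auxiliary, merely Lipschitz continuous partition of unity $\{\theta_j^c\}_{j=1}^J$ subordinate to $\{\Omega_j\}$ with $\linftynorm[(\Omega)]{\nabla\theta_j^c}\le c/\delta$. Second, I set $\theta_j:=I_h\theta_j^c$, where $I_h$ is the nodal (piecewise bilinear) interpolation operator associated with $\mesh$, and check that the support property, the identity $\sum_j\theta_j\equiv1$, the pointwise bounds $0\le\theta_j\le1$, and the gradient estimate all survive this interpolation.

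For the first step I would use \cref{assumption:sufficient-overlap} to choose, for each $j$, an open set $\Omega_j'$ with $\overline{\Omega_j'}\subset\Omega_j$, lying at distance of order $\delta$ from $\Omega\setminus\Omega_j$, and such that $\{\Omega_j'\}_{j=1}^J$ still covers $\overline\Omega$ — this is possible precisely because adjacent subdomains overlap by at least $\delta$ — and then take any Lipschitz $\hat\theta_j$ with $\hat\theta_j\equiv1$ on $\Omega_j'$, $\hat\theta_j\equiv0$ on $\Omega\setminus\Omega_j$, $0\le\hat\theta_j\le1$ and $\linftynorm[(\Omega)]{\nabla\hat\theta_j}\le c/\delta$ (for instance a truncated, $\delta$-scaled distance function). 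Since $\{\Omega_j'\}$ covers $\overline\Omega$, at every point at least one $\hat\theta_k$ equals $1$, so $S:=\sum_k\hat\theta_k\ge1$ everywhere, while by \cref{assumption:finite-covering} at most $N^c$ of the $\hat\theta_k$ are nonzero near any fixed point, whence $\linftynorm[(\Omega)]{\nabla S}\le c/\delta$ with $c=c(N^c)$. Setting $\theta_j^c:=\hat\theta_j/S$ then gives $\operatorname{supp}(\theta_j^c)\subset\overline{\Omega_j}$, $0\le\theta_j^c\le1$, $\sum_j\theta_j^c\equiv1$, and, by the quotient rule together with $S\ge1$, $\linftynorm[(\Omega)]{\nabla\theta_j^c}\le c/\delta$.

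For the second step, linearity of $I_h$ together with its exact reproduction of constants gives $\sum_j\theta_j=I_h\!\left(\sum_j\theta_j^c\right)=I_h1=1$. The nodal values of $\theta_j$ lie in $[0,1]$ because those of $\theta_j^c$ do, and since a piecewise bilinear function attains its extrema over each cell at the cell vertices, this propagates to $0\le\theta_j\le1$ on $\overline\Omega$. Because each $\Omega_j$ is a union of cells of $\mesh$ and $\theta_j^c$ vanishes outside $\Omega_j$, every nodal value of $\theta_j$ outside $\Omega_j$ vanishes, so $\operatorname{supp}(\theta_j)\subset\overline{\Omega_j}$. For the gradient bound I would invoke the $h$-independent interpolation stability estimate $\linftynorm[(\cell)]{\nabla I_h v}\le c\,\linftynorm[(\cell)]{\nabla v}$, valid on each $\cell\in\mesh$ for Lipschitz $v$ by the standard scaling argument to the reference cell combined with shape regularity (on the reference cell the gradient of the bilinear interpolant is bounded by the maximal difference of nodal values, hence by $c\,h_\cell\linftynorm[(\cell)]{\nabla v}$, and mapping back costs a factor $\le c/h_\cell$); applied to $v=\theta_j^c$ and maximized over $\cell\in\mesh$ this yields $\linftynorm[(\Omega)]{\nabla\theta_j}\le c\,\linftynorm[(\Omega)]{\nabla\theta_j^c}\le c/\delta$.

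I do not expect a real obstacle: the partition-of-unity identity, the range $[0,1]$, and the support property are formal consequences of $I_h$ being linear, constant-preserving and locally supported, and the only quantitative ingredient is the interpolation stability estimate used in the last step, which is routine. An alternative that bypasses the interpolation step altogether is available when, as in our vertex-patch setting, $\Omega_j$ is a union of mesh cells organized in layers: one then defines $\theta_j$ directly as the piecewise bilinear function equal to $1$, respectively $0$, at vertices well inside, respectively outside, $\Omega_j$ and interpolating affinely in the layer index across the overlap, so that $\linftynorm[(\Omega)]{\nabla\theta_j}\sim\delta^{-1}$ holds by construction; \cref{assumption:sufficient-overlap} guarantees the overlap is at least one mesh layer wide, which is what makes this well defined.
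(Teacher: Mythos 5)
The paper gives no proof of its own here---it simply cites \cite[Lemma~3.4]{ToselliWidlund10}---and your two-step construction (normalized Lipschitz cut-offs built from $\delta$-scaled distance functions, followed by nodal interpolation into the piecewise bilinear space, using linearity, reproduction of constants, the vertex-maximum property of $Q_1$ functions, and the $h$-independent $L^\infty$ stability of $\nabla I_h$ under shape regularity) is precisely the standard argument behind that cited lemma. Your proposal is correct and takes essentially the same route, including the $c/\delta$ gradient bound and the remark that in the vertex-patch setting one can define $\theta_j$ directly by layers.
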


In order to represent divergence free functions in the displacement
space $U_h^0$, we introduce the space
\begin{gather*}
    S(\Omega,\mesh) = \left\{ s\in H^1_0(\Omega)\;\middle\vert\; s_{|\cell} \in H^2(\cell)\right\},
\end{gather*}
which is equipped with the discrete norm
\begin{gather}
  \label{eq:norm-2h}
    \dgnormtwo{s} = \left(
    \sum_{\cell\in\mesh}
    \ltwonorm[\cell]{\nabla^2 s}^2
    + \sum_{\face\in\setoffaces} \frac1{h} \ltwonorm[\face]{\jump{\partial_n s}}^2
    + \sum_{\boundary\in\setofboundaries} \frac1{h} \ltwonorm[\boundary]{\partial_n s}^2
    \right)^{\frac12}.
\end{gather}
Following~\cite{KanschatSharma14}, the bilinear form
$\eehform(\cdot,\cdot)$ on the divergence free subspace is
algebraically equivalent to the bilinear form of a $C^0$-interior
penalty formulation of a corresponding biharmonic problem by assigning
velocities $\bu_h = \curl s$, where $\bu_h$ belongs to the discrete stream function space $S(\Omega,\mesh)$.
Combining arguments from~\cite{KanschatSharma14} and~\cite{BrennerWang05}, it is easy to see that in two space dimensions there holds the elementary identity
\begin{gather}
  \label{eq:equivalence_curlH1_H2}
  \dgnormone{\curl s}
  = \dgnormtwo{s}, \quad \forall s\in S(\Omega,\mesh),
\end{gather}
where $\curl = (\partial_2,-\partial_1)$.

Next, we recall a result showing that stream functions in $S(\Omega,\mesh)$ have a stable decomposition with respect to the norm~\eqref{eq:norm-2h}.

\begin{lemma} \label{lemma:decomposition_H2}
Every $s\in S(\Omega,\mesh)$ admits a decomposition $s=\sum_{j=0}^J s_j$ with $s_j\in S(\Omega_j,\mesh)$, $s_0\in S(\Omega_0,\coarsemesh)$, such that
\begin{align}
\label{eq:decomposition_H2}
    \dgnormtwo[H]{s_0}^2 + \sum_{j=1}^J \dgnormtwo{s_j}^2 \leq c \left(1+\frac{H^4}{\delta^4}\right) \dgnormtwo{s}^2
\end{align}
for some constant $c>0$.
\end{lemma}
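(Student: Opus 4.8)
The plan is to construct the decomposition of $s \in S(\Omega,\mesh)$ in two stages: first peel off a coarse contribution $s_0 \in S(\Omega_0,\coarsemesh)$ using a suitable quasi-interpolation (e.g.\ a Scott--Zhang-type or Clément-type operator adapted to the $H^2$-conforming coarse stream-function space), and then distribute the remainder $s - s_0$ over the local patches by means of the piecewise bilinear partition of unity $\{\theta_j\}_{j=1}^J$ from \cref{prop:partition-of-unity}, setting $s_j := I^h_j(\theta_j (s - s_0))$, where $I^h_j$ denotes a (local) nodal-type interpolation onto $S(\Omega_j,\mesh)$ needed because $\theta_j (s-s_0)$ is only piecewise smooth and need not lie in the finite element space. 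The target estimate \eqref{eq:decomposition_H2} then splits into a coarse part $\dgnormtwo[H]{s_0}^2 \le c\,\dgnormtwo{s}^2$ and a local part $\sum_j \dgnormtwo{s_j}^2 \le c(1 + H^4/\delta^4)\,\dgnormtwo{s}^2$, where the factor $H^4/\delta^4$ will arise from differentiating the partition of unity twice.

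For the coarse estimate I would rely on $H^1$-stability and appropriate approximation properties of the coarse quasi-interpolant $I^H$: stability in the broken $H^2$-seminorm together with a standard approximation bound of the form $\dgnormtwo{s - I^H s} \le c\,\dgnormtwo{s}$ (using shape regularity of $\coarsemesh$ and the inverse-type control of the jump terms in \eqref{eq:norm-2h}). This gives $s - s_0 =: e$ with $\dgnormtwo{e} \le c\,\dgnormtwo{s}$ and, crucially, the additional $L^2$-approximation estimates $\|e\|_{L^2(\Omega)} \le c H^2 \dgnormtwo{s}$ and $\|\nabla e\|_{L^2(\Omega)} \le c H \dgnormtwo{s}$, which feed into the partition-of-unity step. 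For the local estimate, on each patch $\Omega_j$ one expands
\begin{align*}
    \nabla^2\big(\theta_j e\big) = \theta_j \nabla^2 e + 2\,\nabla\theta_j \otimes \nabla e + e\,\nabla^2\theta_j,
\end{align*}
and since $\theta_j$ is piecewise bilinear one has $\nabla^2 \theta_j = 0$ cellwise, so on each cell only the first two terms survive; using $\|\nabla\theta_j\|_{L^\infty} \le c/\delta$ and summing over the finitely overlapping patches (\cref{assumption:finite-covering}) yields
\begin{align*}
    \sum_{j=1}^J \sum_{\cell} \|\nabla^2(\theta_j e)\|_{L^2(\cell)}^2
    \le c\Big( \|\nabla^2 e\|^2 + \tfrac{1}{\delta^2}\|\nabla e\|^2 \Big)
    \le c\Big(1 + \tfrac{H^2}{\delta^2}\Big)\dgnormtwo{s}^2.
\end{align*}
The jump terms $h^{-1}\|\jump{\partial_n(\theta_j e)}\|_{L^2(\face)}^2$ across interior faces are handled similarly: writing $\partial_n(\theta_j e) = \theta_j \partial_n e + e\,\partial_n\theta_j$ and noting that $\theta_j$ is continuous while $\partial_n \theta_j$ has controlled jumps, one uses trace/inverse inequalities on each face to bound these by $h^{-1}\|\jump{\partial_n e}\|_{L^2(\face)}^2$ plus $\delta^{-2}h^{-1}\|e\|^2_{L^2(\face)} \le c\,\delta^{-2}(h^{-2}\|e\|^2_{L^2(\cell)} + \|\nabla e\|^2_{L^2(\cell)})$, and the factor $H^2/\delta^2 \cdot H^2$ combines with $\|e\|_{L^2} \le cH^2\dgnormtwo s$ to give the $H^4/\delta^4$ term. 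Finally, the correction from replacing $\theta_j e$ by its interpolant $I^h_j(\theta_j e) \in S(\Omega_j,\mesh)$ must be controlled by the same quantities via local interpolation-error estimates in the $\dgnormtwo{\cdot}$-norm, using that $\theta_j e$ is piecewise smooth on $\mesh$ and that $I^h_j$ preserves homogeneous boundary values on $\partial\Omega_j$.

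The main obstacle is the last point — verifying that the cut-and-interpolate operation $s_j := I^h_j(\theta_j(s - s_0))$ is both well-defined as a member of $S(\Omega_j,\mesh)$ (respecting the zero boundary trace on $\partial\Omega_j$ and the $H^2$-per-cell regularity) and stable in the discrete norm \eqref{eq:norm-2h}, since multiplying a finite element stream function by a bilinear cutoff leaves the finite element space and one must quantify the interpolation defect in a norm that controls second derivatives and normal-derivative jumps. This is where shape regularity, the inverse inequality $\dgnormtwo{I^h_j \chi} \le c\,\dgnormtwo{\chi}$ for piecewise-$H^2$ functions $\chi$, and the $L^2$-approximation bounds on $e = s - s_0$ all have to be combined carefully; once that bookkeeping is done, summation over the $N^c$-colorable patch family closes the estimate with the asserted constant $c(1 + H^4/\delta^4)$.
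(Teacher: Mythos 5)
The paper gives no self-contained proof here: it simply points to the proof of Lemma 4.2 in \cite{BrennerWang05}, and your two-stage plan (coarse quasi-interpolant, then partition-of-unity cut-off followed by local interpolation) is the same strategy as in that reference. The genuine gap is in your bookkeeping, exactly at the point where the factor $H^4/\delta^4$ has to be produced. You first announce that this factor ``arises from differentiating the partition of unity twice'', but then you discard the term $e\,\nabla^2\theta_j$ on the grounds that $\nabla^2\theta_j=0$ cellwise (not even true for bilinears, whose mixed second derivative survives and is only of size $\delta^{-2}$). With that term dropped, your volume estimate delivers only $c(1+H^2/\delta^2)\dgnormtwo{s}^2$. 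In the correct argument this term is precisely the point: a cut-off with $\linftynorm{\nabla^2\theta_j}\le c\,\delta^{-2}$ combined with $\ltwonorm{e}\le c\,H^2\dgnormtwo{s}$ gives $\delta^{-4}\ltwonorm{e}^2\le c\,(H^4/\delta^4)\dgnormtwo{s}^2$, which is the source of the stated constant.

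The alternative route you propose, via the face terms, does not close. Your own computation yields $\delta^{-2}h^{-2}\ltwonorm{e}^2\le c\,H^4/(\delta^2h^2)\,\dgnormtwo{s}^2$, and since \cref{assumption:sufficient-overlap} only guarantees $\delta\ge c_1h$, the quantity $H^4/(\delta^2h^2)$ dominates $H^4/\delta^4$ whenever the overlap is generous ($\delta\gg h$); as written you therefore obtain a weaker, $h$-dependent bound, not \eqref{eq:decomposition_H2}. The trouble is created by using the merely Lipschitz, piecewise bilinear partition of unity of \cref{prop:partition-of-unity}: its normal derivative jumps across faces, so the penalty part of \eqref{eq:norm-2h} picks up $h^{-1}\ltwonorm[\face]{e\jump{\partial_n\theta_j}}^2$ and costs you the factor $h^{-2}$. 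For a fourth-order norm one needs cut-off functions with one more degree of smoothness, i.e.\ $C^1$ functions $\theta_j$ with $\linftynorm{\nabla\theta_j}\le c\,\delta^{-1}$ and $\linftynorm{\nabla^2\theta_j}\le c\,\delta^{-2}$, as in \cite{BrennerWang05}; then $\jump{\partial_n(\theta_j e)}=\theta_j\jump{\partial_n e}$, no new face contributions appear, and $H^4/\delta^4$ arises from the volume term above. Finally, the stability of the cut-and-interpolate step $s_j=I^h_j(\theta_j e)$ in the norm \eqref{eq:norm-2h}, which you yourself flag as the main obstacle, is asserted rather than proved; that estimate is the technical core of the cited proof and cannot be skipped.
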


\begin{proof}
  Compare the proof of Lemma 4.2 in~\cite{BrennerWang05}.
\end{proof}

The above results suffice to prove the stability of the decomposition of the divergence free subspace of $U_h$ with respect to the elasticity energy norm.
\begin{lemma}
\label{lemma:decomposition_e_null}
Every function $\bu^0\in U_h^0$ admits a decomposition of the form $\bu^0=\sum_{j=0}^J \bu^0_j$ with $\bu^0_j\in U_j^0$, which satisfies the bound
\begin{gather*}
    \eeHform(\bu^0_0,\bu^0_0) + \sum_{j=1}^J \eehform(\bu^0_j,\bu^0_j)
    \leq c \left(1 + \frac{H^4}{\delta^4}\right) \eehform(\bu^0,\bu^0)
\end{gather*}
for some constant $c>0$.
\end{lemma}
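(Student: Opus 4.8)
The plan is to transport the stable decomposition of stream functions established in \cref{lemma:decomposition_H2} to the displacement space through the discrete $\curl$. Recall from the discussion preceding \cref{lemma:decomposition_H2} that on the (simply connected) domain $\Omega$ the map $s\mapsto\curl s$ identifies the stream-function space $S(\Omega,\mesh)$ with the divergence-free subspace $U_h^0$, and that the same correspondence holds on the coarse triangulation between $S(\Omega_0,\coarsemesh)$ and the coarse divergence-free space $U_0^0\subset U_H$. First I would take the unique $s\in S(\Omega,\mesh)$ with $\bu^0=\curl s$, apply \cref{lemma:decomposition_H2} to obtain $s=\sum_{j=0}^J s_j$ with $s_0\in S(\Omega_0,\coarsemesh)$, $s_j\in S(\Omega_j,\mesh)$ for $j\ge 1$, and~\eqref{eq:decomposition_H2} satisfied, and then set $\bu^0_j:=\curl s_j$.

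Next I would check that $\{\bu^0_j\}_{j=0}^J$ is an admissible decomposition. Each $\bu^0_j=\curl s_j$ is discretely divergence free; since $\operatorname{supp}(s_j)\subset\overline{\Omega_j}$ it is supported in $\overline{\Omega_j}$; and since $s_j\in H^1_0(\Omega_j)$ its tangential derivative vanishes on $\partial\Omega_j$, so that the normal trace $\bu^0_j\cdot\bn=\partial_\tau s_j$ vanishes there. Combined with $\curl s_j\in U_h$ (which follows from $s_j\in S(\Omega,\mesh)$ and $\curl S(\Omega,\mesh)=U_h^0$), this gives $\curl s_j\in U_h\cap\Hdiv_0(\Omega_j)=U_j$, hence $\bu^0_j\in U_j^0$ for $j\ge1$; the coarse version of the argument gives $\bu^0_0\in U_0^0$. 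Linearity of $\curl$ then yields $\sum_{j=0}^J\bu^0_j=\curl\!\left(\sum_{j=0}^J s_j\right)=\curl s=\bu^0$.

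For the stability estimate I would chain continuity, the elementary identity~\eqref{eq:equivalence_curlH1_H2}, \cref{lemma:decomposition_H2}, and coercivity. For $j\ge1$, continuity~\eqref{eq:continuity_e} of $\eehform(\cdot,\cdot)$ and~\eqref{eq:equivalence_curlH1_H2} give $\eehform(\bu^0_j,\bu^0_j)\le c\,\dgnormone{\bu^0_j}^2=c\,\dgnormone{\curl s_j}^2=c\,\dgnormtwo{s_j}^2$, and the coarse-mesh analogues of~\eqref{eq:continuity_e} and~\eqref{eq:equivalence_curlH1_H2} give $\eeHform(\bu^0_0,\bu^0_0)\le c\,\dgnormtwo[H]{s_0}^2$. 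Summing over $j$ and inserting~\eqref{eq:decomposition_H2},
\begin{align*}
  \eeHform(\bu^0_0,\bu^0_0)+\sum_{j=1}^J\eehform(\bu^0_j,\bu^0_j)
  &\le c\left(\dgnormtwo[H]{s_0}^2+\sum_{j=1}^J\dgnormtwo{s_j}^2\right)\\
  &\le c\left(1+\frac{H^4}{\delta^4}\right)\dgnormtwo{s}^2.
\end{align*}
It remains to apply~\eqref{eq:equivalence_curlH1_H2} once more together with coercivity~\eqref{eq:coercivity_e}: $\dgnormtwo{s}^2=\dgnormone{\curl s}^2=\dgnormone{\bu^0}^2\le c\,\eehform(\bu^0,\bu^0)$, which closes the chain.

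The main obstacle I anticipate lies in the bookkeeping of the curl/stream-function dictionary at the several scales: that $\curl$ maps $S(\Omega,\mesh)$ onto $U_h^0$ and $S(\Omega_0,\coarsemesh)$ onto the coarse divergence-free space, that the local boundary conditions encoded in $U_j=U_h\cap\Hdiv_0(\Omega_j)$ are matched exactly by $s_j\in H^1_0(\Omega_j)$, and that~\eqref{eq:equivalence_curlH1_H2} together with the continuity estimate remain valid on the coarse triangulation. These are structural facts following from~\cite{KanschatSharma14,BrennerWang05} and the compatibility~\eqref{eq:massconservationcondition} of the discretization spaces, but they must be recorded for each space involved; the remaining steps are a routine sequence of norm equivalences.
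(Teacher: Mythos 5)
Your proposal is correct and follows essentially the same route as the paper's proof: represent $\bu^0=\curl s$ by its stream function, decompose $s$ via \cref{lemma:decomposition_H2}, set $\bu^0_j=\curl s_j$, and chain continuity of the bilinear forms, the identity~\eqref{eq:equivalence_curlH1_H2}, the stream-function stability bound, and coercivity~\eqref{eq:coercivity_e}. The only difference is that you spell out the verification that $\curl s_j$ lands in $U_j^0$ (and the coarse analogue), which the paper leaves implicit.
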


\begin{proof}
For every $\bu^0\in U_h^0$ there exists a unique stream function $s\in S(\Omega,\mesh)$, such that
\begin{align*}
    \bu^0 = \curl s.
\end{align*}
Now we decompose $s = \sum_{j=0}^J s_j$ and choose
\begin{gather*}
    \bu^0_j = \curl s_j,
\end{gather*}
such that $\bu^0_j\in U_j^0$ and by linearity of the curl
\begin{gather*}
    \bu^0 = \sum_{j=0}^J \bu^0_j.
\end{gather*}
Continuity of $\eehform(\cdot,\cdot)$, the stability of the decomposition in~\eqref{eq:decomposition_H2}, the identity~\eqref{eq:equivalence_curlH1_H2}, 
and Korn's inequality~\eqref{eq:coercivity_e} yield
\begin{multline*}
    \eeHform(\bu^0_0,\bu^0_0) + \sum_{j=1}^J \eehform(\bu^0_j,\bu^0_j)
    \leq
    c \left(\dgnormtwo[H]{s_0}^2 + \sum_{j=1}^J \dgnormtwo{s_j}^2 \right)\\
    \leq c \left(1+\frac{H^4}{\delta^4}\right)\dgnormtwo{s}^2 
    = c \left(1+\frac{H^4}{\delta^4}\right) \dgnormone{\curl s}^2\\
    \leq c \left(1+\frac{H^4}{\delta^4}\right) \eehform(\bu^0,\bu^0).
\end{multline*}
\end{proof}

In order to show that also $U_h^\perp$ has a stable decomposition with respect to the norm induced by the elasticity bilinear form, we will adapt the construction from~\cite[Chapter10]{ToselliWidlund10}.

For this purpose, we recall the definition of the spaces
\begin{align*}
\Hdiv_0(\Omega) &:= \{ \bv \in \Hdiv(\Omega): \bv \cdot \bn =0 \mbox{ on } \partial \Omega \}, \\
\Hcurl_0(\Omega) &:= \{ v \in \Hcurl(\Omega): v \cdot \btau =0 \mbox{ on } \partial \Omega \},
\end{align*}
and decompose them into
\begin{align}
\Hdiv_0(\Omega) &= H_0^{\div,0}(\Omega) \oplus H_0^{\div,\perp}(\Omega), \label{decomp:H_0_div} \\
\Hcurl_0(\Omega) &= H_0^{\curl,0}(\Omega) \oplus H_0^{\curl,\perp}(\Omega) \label{decomp:H_0_curl},
\end{align}
where $H_0^{\div,\perp}(\Omega)$ and $H_0^{\curl,\perp}(\Omega)$ denote the orthogonal complements of the
divergence-free and curl-free subspaces
\begin{align*}
H_0^{\div,0}(\Omega) &:= \{ \bv \in \Hdiv_0(\Omega): \div \bv =0 \},  \\
H_0^{\curl,0}(\Omega) &:= \{ v \in \Hcurl_0(\Omega): \curl v ={\bm 0} \},
\end{align*}
of $\Hdiv_0(\Omega)$ and $\Hcurl_0(\Omega)$ in the $(\cdot,\cdot)_{\Hdiv(\Omega)}$ and $(\cdot,\cdot)_{\Hcurl(\Omega)}$ inner products, respectively.
Note that the decompositions~\eqref{decomp:H_0_div} and~\eqref{decomp:H_0_curl} are both $L^2$-orthogonal as well.

Now, we introduce a new space $U^+$ containing semicontinuous functions,
which can be characterized as the image of $U_h$ under an orthogonal projection
$\Theta^\perp \colon \Hdiv_0(\Omega)\to H_0^{\div,\perp}(\Omega)$, i.e., $U^+:=\Theta^\perp(U_h)$, which is defined by
\begin{align*}
    \Theta^\perp \bu := \bu - \curl w,
\end{align*}
where $w\in H_0^{\curl,\perp}(\Omega)$ satisfies
\begin{align*}
    \ltwoscal(\curl w, \curl v) = \ltwoscal(\bu, \curl v), \quad \forall v\in H_0^{\curl,\perp}(\Omega).
\end{align*}
Further, we define the projection $P^h\colon \Hdiv_0(\Omega)\to U^+$ by
\begin{align*}
    \ltwoscal(\div{\left(P^h \bu-\bu\right)}, \div\bphi) = 0, \quad \forall \bphi \in U^+.
\end{align*}
Then, for $P^h$ there holds
\begin{gather*}
    \Hdivnorm{P^h \bu^\perp} \le c \Hdivnorm{\bu^\perp}, \quad \forall \bu^\perp \in U_h^\perp
\end{gather*}
with a constant $c$ depending on $\Omega$.
According to~\cite[Proposition 4.6]{CockburnKanschatSchoetzau05} 
there is another constant $c$, independent of the mesh size, but depending on shape-regularity of the mesh and the polynomial degree, such that
\begin{gather}
  \label{eq:PH-bounded-discrete}
  \dgnormone{P^h \bu^\perp} \le c \dgnormone{\bu^\perp}.
\end{gather}
Moreover, from~\cite[Lemma 10.12]{ToselliWidlund10} for convex $\Omega$, we have the approximation property
\begin{gather}
    \label{eq:Ph-L2-error}
    \ltwonorm{u^\perp-P^h \bu^\perp} \le c \, h \ltwonorm{\div \bu^\perp}
    \qquad\forall \bu^\perp\in U_h^\perp.
\end{gather}
For the $L^2$-projection $\Ltwoprojection[H]\colon L^2(\Omega)^d\to U_0$ onto the coarse space $U_0$ there holds
\begin{align} \label{eq:L2error_L2projection}
    \ltwonorm{\bu - \Ltwoprojection[H] \bu}
    \leq c \, H \dgnormone{\bu}.
\end{align}
Additionally, we have $H^1$-stability of $\Ltwoprojection[H]$, that is, 
\begin{align}
    \label{eq:H1stabilityL2projection}
    \dgnormone{\Ltwoprojection[H]\bu} \leq c \dgnormone{\bu}.
\end{align}
We are ready to prove the following lemma.

\begin{lemma} \label{lemma:decomposition_e_perp}
Every function $\bu^\perp\in U_h^\perp$ admits a decomposition of the form $\bu^\perp=\sum_{j=0}^J \bu^1_j$ with $\bu^1_j\in U_j$,
which satisfies the bound
\begin{gather}\label{eq:eh-decomp-global}
    \eeHform(\bu^1_0,\bu^1_0) + \sum_{j=1}^J \eehform(\bu^1_j,\bu^1_j)
    \leq c \left(1 + \frac{H^2}{\delta^2}\right) \eehform(\bu^\perp,\bu^\perp)
\end{gather}
for some constant $c>0$.
\end{lemma}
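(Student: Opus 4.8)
The plan is to adapt the stable $\Hdiv$-decomposition of \cite[Chapter~10]{ToselliWidlund10} to the present $\Hdiv$-conforming DG setting, along the lines already sketched before the lemma. For the coarse component I would take $\bu^1_0:=\Ltwoprojection[H]\bu^\perp\in U_0$ and set $\bw:=\bu^\perp-\bu^1_0\in U_h$. Combining the $H^1$-stability \eqref{eq:H1stabilityL2projection} and the $L^2$-approximation \eqref{eq:L2error_L2projection} of $\Ltwoprojection[H]$ with the triangle inequality gives
\begin{gather*}
  \dgnormone{\bu^1_0}\le c\,\dgnormone{\bu^\perp},\qquad \dgnormone{\bw}\le c\,\dgnormone{\bu^\perp},\qquad \ltwonorm{\bw}\le c\,H\,\dgnormone{\bu^\perp}.
\end{gather*}
Since $\bu^1_0\in U_0$ has no jumps across fine faces interior to coarse cells, $\dgnormone[H]{\bu^1_0}\le\dgnormone{\bu^1_0}$, so continuity of $\eeHform(\cdot,\cdot)$ on the coarse mesh together with coercivity \eqref{eq:coercivity_e} already gives $\eeHform(\bu^1_0,\bu^1_0)\le c\,\eehform(\bu^\perp,\bu^\perp)$, settling the coarse term.

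Second, I would localize the remainder with the partition of unity $\{\theta_j\}_{j=1}^J$ of \cref{prop:partition-of-unity}. Because $\theta_j$ is continuous, $\operatorname{supp}\theta_j\subset\overline{\Omega_j}$ and $\bw\cdot\bn=0$ on $\partial\Omega$, each product $\theta_j\bw$ lies in $\Hdiv_0(\Omega_j)$ and is piecewise smooth, but it is not contained in $U_h$ (multiplication by the bilinear $\theta_j$ raises the polynomial degree). I would therefore map it back into the discrete space with the local canonical $\Hdiv$-interpolation operator $\interpolant[h]\colon[H^1(\Omega,\mesh)]^2\cap\Hdiv(\Omega)\to U_h$ and set $\bu^1_j:=\interpolant[h](\theta_j\bw)$, $j=1,\dots,J$. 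Locality of $\interpolant[h]$, the support of $\theta_j$, and $(\theta_j\bw)\cdot\bn=0$ on $\partial\Omega_j$ then yield $\bu^1_j\in U_j$; moreover $\interpolant[h]$ reproduces $U_h$ and $\sum_j\theta_j\equiv1$ by \eqref{eq:partitionofunity}, hence $\sum_{j=1}^J\bu^1_j=\interpolant[h]\bw=\bw$ and $\sum_{j=0}^J\bu^1_j=\bu^\perp$ as required. (One could instead use $P^h$ together with \eqref{eq:PH-bounded-discrete}, \eqref{eq:Ph-L2-error}, at the price of separately showing that its image is supported in $\overline{\Omega_j}$.)

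Third, for the energy estimate I would use boundedness of $\interpolant[h]$ in $\dgnormone{\cdot}$ on piecewise-smooth $\Hdiv$-functions, so $\dgnormone{\bu^1_j}\le c\,\dgnormone{\theta_j\bw}$; expanding $\nabla(\theta_j\bw)=\theta_j\nabla\bw+\nabla\theta_j\otimes\bw$ cellwise, using $0\le\theta_j\le1$ and $\linftynorm[(\Omega)]{\nabla\theta_j}\le c/\delta$, and $\jump{\theta_j\bw}=\theta_j\jump{\bw}$ for the face and boundary terms, this leads to
\begin{gather*}
  \dgnormone{\bu^1_j}^2\le c\left(\sum_{\cell\subset\Omega_j}\ltwonorm[\cell]{\nabla\bw}^2+\frac1{\delta^2}\ltwonorm[\Omega_j]{\bw}^2+\sum_{\face\subset\Omega_j}\frac1h\ltwonorm[\face]{\jump{\bw}}^2+\dots\right),
\end{gather*}
where $\dots$ stands for the analogous boundary-face contributions. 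Summing over $j$, invoking the finite-overlap \cref{assumption:finite-covering} and inserting the bounds on $\bw$ gives $\sum_{j=1}^J\dgnormone{\bu^1_j}^2\le c\,(1+H^2/\delta^2)\,\dgnormone{\bu^\perp}^2$; one last use of continuity \eqref{eq:continuity_e} on the left-hand sides and coercivity \eqref{eq:coercivity_e} on the right then turns the $\dgnormone{\cdot}^2$ bounds into \eqref{eq:eh-decomp-global}.

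The step I expect to be the main obstacle is the interplay between interpolation and cut-off: showing $\dgnormone{\cdot}$-boundedness of $\interpolant[h]$ for the higher-degree, merely piecewise-smooth function $\theta_j\bw$ requires the right inverse, trace and interpolation-error estimates on cells and faces, and one must carefully verify that $\bu^1_j$ really lies in $U_j$ (support in $\overline{\Omega_j}$ and vanishing normal trace on $\partial\Omega_j$). Everything else is routine; in particular the factor $1+H^2/\delta^2$ is automatic once the above is in place, the $H^2$ stemming from the $O(H)$ $L^2$-approximation of $\Ltwoprojection[H]$ and the $\delta^{-2}$ from the $O(\delta^{-1})$ bound on $\nabla\theta_j$.
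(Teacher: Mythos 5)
Your construction is essentially the paper's own proof: an $L^2$-projection onto the coarse space for $\bu^1_0$, localization of the remainder $\tilde\bu=\bu^\perp-\bu^1_0$ by the partition of unity followed by the canonical interpolation $\interpolant$, the factor $\delta^{-2}$ coming from $\nabla\theta_j$, the factor $H^2$ from the $O(H)$ $L^2$-bound on $\tilde\bu$, and the final passage between $\dgnormone{\cdot}$ and $\eehform(\cdot,\cdot)$ via \eqref{eq:coercivity_e}--\eqref{eq:continuity_e}. The one substantive deviation is the coarse component: the paper takes $\bu^1_0=\Ltwoprojection[H]P^h\bu^\perp$ and bounds $\ltwonorm{\tilde\bu}$ by splitting into $\bu^\perp-P^h\bu^\perp$ (estimated by $c\,h\ltwonorm{\div\bu^\perp}$ through \eqref{eq:Ph-L2-error}, which needs convexity of $\Omega$) and $P^h\bu^\perp-\Ltwoprojection[H]P^h\bu^\perp$ (estimated through \eqref{eq:L2error_L2projection} applied to the semicontinuous function $P^h\bu^\perp$, together with \eqref{eq:PH-bounded-discrete}), whereas you drop $P^h$ and apply \eqref{eq:L2error_L2projection} directly to the genuinely discontinuous function $\bu^\perp\in U_h^\perp$. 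That shortcut is admissible exactly to the extent that the coarse $L^2$-projection error estimate is available for broken functions measured in $\dgnormone{\cdot}$; as \eqref{eq:L2error_L2projection} is stated with the DG norm this reading is defensible, but it is precisely the point the paper sidesteps by routing through $P^h$, so you should either prove the broken-norm version or keep the intermediate projection. Finally, the step you flag as the main obstacle is not one: $\theta_j\tilde\bu$ is piecewise polynomial (piecewise bilinear times a fine-mesh polynomial), so the stability \eqref{eq:H1stabilityInterpolation} of $\interpolant$ in $\dgnormone{\cdot}$ follows by a cellwise scaling argument, exactly as the paper observes, and $\bu^1_j\in U_j$ because the interpolation is local and the normal-trace degrees of freedom of $\theta_j\tilde\bu$ vanish on and outside $\partial\Omega_j$ where $\theta_j$ vanishes.
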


\begin{proof}
We choose
\begin{align*}
    \bu^1_0 &= \Ltwoprojection[H] P^h \bu^\perp, \\
    \bu^1_j &= \interpolant\left(\theta_j \left(\bu^\perp-\bu^1_0\right)\right),
\end{align*}
where $\interpolant$ is the canonical interpolation into $U_h$ and $\{\theta_j\}_{j=1}^J$ the piecewise linear partition of unity defined
in~\eqref{eq:partitionofunity}.
We point at the fact that $\bu^1_j\in U_j$ is not necessarily a function in $U_j^\perp$.

Let $\tilde{\bu} = \bu^\perp-\bu^1_0$.
Noting that on each cell $\cell\in\mesh$, the function $\theta_j \tilde{\bu}$ is polynomial, such that there holds
\begin{gather}
    \label{eq:H1stabilityInterpolation}
    \dgnormone{\interpolant\left(\theta_j \tilde \bu\right)} \leq c \dgnormone{\theta_j \tilde \bu}.
\end{gather}
By continuity of the bilinear form $\eeHform(\cdot,\cdot)$, the bounds~\eqref{eq:H1stabilityL2projection} and~\eqref{eq:PH-bounded-discrete},
and coercivity of $\eehform(\cdot,\cdot)$, we get
\begin{align}
  \label{eq:eh-decomp-coarse}
  \eeHform(\bu^1_0,\bu^1_0)
  &\leq c\dgnormone{\Ltwoprojection[H]P^h \bu^\perp}^2 \nonumber \\
  &\leq c\dgnormone{P^h \bu^\perp}^2
  \leq c\dgnormone{\bu^\perp}^2
  \le c\, \eehform(\bu^\perp,\bu^\perp)
\end{align}
and
\begin{gather}
    \label{eq:H1error_utilde}
    \dgnormone{\tilde{\bu}}^2
    \le c \left(\dgnormone{\bu^\perp}^2 + \dgnormone{\Ltwoprojection[H] P^h \bu^\perp}^2 \right)
    \le c\, \eehform(\bu^\perp,\bu^\perp).
\end{gather}
By \eqref{eq:Ph-L2-error}, \eqref{eq:L2error_L2projection}, and using $h\leq H$, we obtain
\begin{equation} \label{eq:L2error_utilde}
    \begin{aligned}
    \ltwonorm{\tilde{\bu}}
    &\leq \ltwonorm{\bu^\perp - P^h \bu^\perp} + \ltwonorm{P^h \bu^\perp - \Ltwoprojection[H]P^h \bu^\perp}
    \\
    &\leq c \, h \ltwonorm{\div \bu^\perp} + c \, H \dgnormone{\bu^\perp}
    \\
    &\leq c \, H \dgnormone{\bu^\perp}.
    \end{aligned}
\end{equation}
Using the continuity of $\eehform(\cdot,\cdot)$, \eqref{eq:H1stabilityInterpolation}, the properties of $\theta_j$,
\eqref{eq:L2error_utilde}, \eqref{eq:H1error_utilde}, and the coercivity of $\eehform(\cdot,\cdot)$, as well as the 
finite covering~\cref{assumption:finite-covering}, we get
\begin{equation}
\label{eq:eh-decomp-local}
\begin{split}
  \sum_{j=1}^J \eehform(\bu^1_j,\bu^1_j)
  &\le c \sum_{j=1}^J \dgnormone{\interpolant\left(\theta_j \tilde{\bu} \right)}^2
  \le c \sum_{j=1}^J \dgnormone{\theta_j \tilde{\bu} }^2
  \\
  &\le c \sum_{j=1}^J \Biggl( \sum_{\cell\in\mesh} \left( \linftynorm[(\cell)]{\nabla\theta_j}^2 \ltwonorm[\cell]{\tilde{\bu}}^2 + \ltwonorm[\cell]{\theta_j \nabla \tilde{\bu}}^2 \right)
  \\
  &\qquad\qquad
         + \sum_{\face\in\setoffaces} \frac1{h} \ltwonorm[\face]{\theta_j \jump{\tilde{\bu} }}^2
         + \sum_{\boundary\in\setofboundaries} \frac1{h} \ltwonorm[\boundary]{\theta_j \tilde{\bu} }^2 \Biggr)
  \\
  &\le c \frac{1}{\delta^2} \ltwonorm{\tilde{\bu}}^2
  + c \dgnormone{\tilde{\bu}}^2
  \\
  &\le c \left(1+\frac{H^2}{\delta^2} \right) \eehform(\bu^\perp,\bu^\perp).
\end{split}
\end{equation}
Finally, combining~\eqref{eq:eh-decomp-coarse} and \eqref{eq:eh-decomp-local} results in the desired estimate~\eqref{eq:eh-decomp-global}.
\end{proof}

The last preparatory step is to show the stability of the decomposition of
$\Wh[0]=\bigcup_{j=0}^J \Wj[0]$ with respect to $\singularform(\cdot,\cdot)$.

\begin{lemma} \label{lemma:decomposition_kernel}
Every $\bw^0=(\bu^0,\bv^0)\in \Wh[0]$ admits a decomposition of the form
$\bw^0 = \sum_{j=0}^J \bw^0_j$ with $\bw^0_j \in \Wj[0]$, which satisfies the bound
\begin{gather*}
    \sum_{j=0}^J \locsingularform(\bw^0_j,\bw^0_j)
    \leq \constK \left(1+\frac{H^4}{\delta^4}\right) \singularform(\bw^0,\bw^0),
\end{gather*}
where $\constK = c \nfrac{\scaledRmax}{\scaledRmin}$ for some constant $c>0$ independent of the model parameters and of $J$.
\end{lemma}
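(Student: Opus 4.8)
The plan is to exploit the product structure of the kernel from \cref{lemma:decompproperties}, namely $\Wh[0]=U_h^0\times V_h^0$ together with its local counterpart $\Wj[0]=U_j^0\times V_j^0$, so as to reduce the assertion to two independent stable decompositions --- one for the displacement and one for the flux. First I would note that each $\bw^0_j=(\bu^0_j,\bv^0_j)\in\Wj[0]$, extended by zero, is divergence free in both components on all of $\Omega$, so $\Dform(\bw^0_j,\bw^0_j)=0$ and therefore
\begin{gather*}
  \locsingularform(\bw^0_j,\bw^0_j)=\eehform(\bu^0_j,\bu^0_j)+\kform(\bv^0_j,\bv^0_j),\qquad j=1,\dots,J,
\end{gather*}
with $\eehform(\cdot,\cdot)$ replaced by $\eeHform(\cdot,\cdot)$ for the coarse index $j=0$; similarly, \eqref{eq:globaldecompkernel} applied with $\bpi_h=\bw^0$ gives $\singularform(\bw^0,\bw^0)=\eehform(\bu^0,\bu^0)+\kform(\bv^0,\bv^0)$. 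Hence it suffices to decompose $\bu^0\in U_h^0$ stably under $\eehform(\cdot,\cdot)$ and $\bv^0\in V_h^0$ stably under $\kform(\cdot,\cdot)$, and then set $\bw^0_j:=(\bu^0_j,\bv^0_j)$.

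For the displacement component I would invoke \cref{lemma:decomposition_e_null} directly, obtaining $\bu^0=\sum_{j=0}^J\bu^0_j$ with $\bu^0_j\in U_j^0$ and $\eeHform(\bu^0_0,\bu^0_0)+\sum_{j=1}^J\eehform(\bu^0_j,\bu^0_j)\le c\,(1+H^4/\delta^4)\,\eehform(\bu^0,\bu^0)$. For the flux component the one genuinely new ingredient is an $L^2$-stable decomposition of the $\Hdiv$-conforming divergence-free space,
\begin{gather}
  \bv^0=\sum_{j=0}^J\bv^0_j,\quad\bv^0_j\in V_j^0,\qquad
  \sum_{j=0}^J\ltwonorm{\bv^0_j}^2\le c\Big(1+\tfrac{H^2}{\delta^2}\Big)\ltwonorm{\bv^0}^2.
  \label{eq:ltwo_decomp_hdivconf}
\end{gather}
To establish \eqref{eq:ltwo_decomp_hdivconf} I would use the two-dimensional discrete stream-function representation $\bv^0=\curl\psi$ with $\psi$ in the associated $H^1_0$-conforming scalar space on $\mesh$, so that $\ltwonorm{\bv^0}=\ltwonorm{\nabla\psi}$ equals the $H^1$-seminorm of $\psi$, and then carry out the classical two-level overlapping Schwarz construction for $H^1$-conforming elements (cf.~\cite[Chapter~10]{ToselliWidlund10}): take a coarse $L^2$-projection $\psi_0$ of $\psi$ onto the scalar coarse space on $\coarsemesh$, local pieces $\psi_j=\interpolant(\theta_j(\psi-\psi_0))$ formed with the partition of unity $\{\theta_j\}$ of \cref{prop:partition-of-unity}, and $\bv^0_j:=\curl\psi_j$. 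Since $\psi-\psi_0$ lies in the fine scalar space and $\sum_j\theta_j\equiv1$, this is a genuine decomposition, each $\curl\psi_j$ belongs to $V_j^0$ because $\psi_j\in H^1_0(\Omega_j)$, and \eqref{eq:ltwo_decomp_hdivconf} follows from $H^1$-stability of $\interpolant$ on the scalar space, the bound $\linftynorm[(\Omega)]{\nabla\theta_j}\le c/\delta$, \cref{assumption:finite-covering}, and the standard coarse estimates $\ltwonorm{\psi-\psi_0}\le c\,H\,\ltwonorm{\nabla\psi}$ and $\ltwonorm{\nabla\psi_0}\le c\,\ltwonorm{\nabla\psi}$ (the scalar analogues of~\eqref{eq:L2error_L2projection}--\eqref{eq:H1stabilityL2projection}), exactly as in the proof of \cref{lemma:decomposition_e_perp}, only one derivative lower.

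It then only remains to pass from the $L^2$-bound~\eqref{eq:ltwo_decomp_hdivconf} to the $\kform(\cdot,\cdot)$-norm, which is the single point at which the dependence on $\scaledpermeability$ enters: by the two-sided bound~\eqref{eq:normequivalence_k} one has $\kform(\bv^0_j,\bv^0_j)\le\scaledRmax\ltwonorm{\bv^0_j}^2$ and $\ltwonorm{\bv^0}^2\le\scaledRmin^{-1}\kform(\bv^0,\bv^0)$, whence $\sum_{j=0}^J\kform(\bv^0_j,\bv^0_j)\le c\,(\scaledRmax/\scaledRmin)\,(1+H^2/\delta^2)\,\kform(\bv^0,\bv^0)$. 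Adding this to the displacement estimate, bounding both local factors by $c\,(1+H^4/\delta^4)$ --- which is legitimate because $\delta\le c_2H$ by \cref{assumption:sufficient-overlap} --- and using the two identities from the first paragraph yields
\begin{gather*}
  \sum_{j=0}^J\locsingularform(\bw^0_j,\bw^0_j)
  \le\constK\Big(1+\tfrac{H^4}{\delta^4}\Big)\big(\eehform(\bu^0,\bu^0)+\kform(\bv^0,\bv^0)\big)\\
  =\constK\Big(1+\tfrac{H^4}{\delta^4}\Big)\singularform(\bw^0,\bw^0)
\end{gather*}
with $\constK=c\,\scaledRmax/\scaledRmin$, as asserted.

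I expect the only real obstacle to be the proof of \eqref{eq:ltwo_decomp_hdivconf}: one must check that, for the particular family of strongly mass-conserving $\Hdiv$-conforming velocity spaces, the discrete divergence-free subspace is precisely the image under $\curl$ of an $H^1_0$-conforming scalar finite element space on $\mesh$ (and on $\coarsemesh$ for the coarse component), and that the interpolation $\interpolant(\theta_j(\psi-\psi_0))$ remains in that space with the stated stability constant. Once that is settled, the rest is the standard two-level Schwarz estimate in the $H^1$-seminorm, and the only price paid beyond the classical machinery is the crude contrast factor $\scaledRmax/\scaledRmin$, which could be sharpened via weighted Poincaré inequalities under quasi-monotonicity assumptions on $\scaledpermeability^{-1}$ but is deliberately left coarse here.
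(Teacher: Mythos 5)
Your argument is correct and follows essentially the same route as the paper: both reduce to the product structure of $\kerDform$ via~\eqref{eq:globaldecompkernel}, use \cref{lemma:decomposition_e_null} for the displacement part, decompose the divergence-free flux through its scalar stream function with the classical two-level $H^1$ decomposition, and pay the contrast factor $\scaledRmax/\scaledRmin$ only when passing to $\kform(\cdot,\cdot)$ via~\eqref{eq:normequivalence_k}. The only difference is cosmetic: the paper simply cites the Dryja--Widlund stable decomposition for stream functions (with constant $1+H/\delta$) and sets $\bv^0_j=\curl r_j$, whereas you re-derive that scalar decomposition by the partition-of-unity construction, which is precisely the standard proof of the cited result.
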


\begin{proof}
By \cref{lemma:decomposition_e_null}, every $\bu^0\in U_h^0$ has a decomposition $\bu^0=\sum_{J=0}^J \bu^0_j$ with $\bu^0_j\in U_j^0$, such that
\begin{align*}
    \eeHform(\bu^0_0,\bu^0_0) + \sum_{j=1}^J \eehform(\bu^0_j,\bu^0_j) \leq c \left(1 + \frac{H^4}{\delta^4} \right) \eehform(\bu^0,\bu^0),
\end{align*}
with $\bu^0_j = \curl s_j$, where $s_j\in S(\Omega,\mesh)$ is chosen as in the proof of \cref{lemma:decomposition_e_null}.

Further, by a classical result in domain decomposition theory that goes back to \cite{DryjaWidlund94}, every $r \in S(\Omega,\mesh)$
has a decomposition $r=\sum_{j=0}^J r_j$, such that
\begin{align*}
    \sum_{j=0}^J \Honenorm{r_j}^2 \leq c \left(1+\frac{H}{\delta}\right) \Honenorm{r}^2,
\end{align*}
where $\Honenorm{r} = \ltwonorm{\nabla r}$.
By choosing $\bv^0_j=\curl r_j$, we get for the second term in~\eqref{eq:globaldecompkernel}
\begin{gather} \label{eq:ltwo_decomp_hdivconf}
\begin{aligned}
    \sum_{j=0}^J \kform(\bv^0_j,\bv^0_j)
    &\leq \scaledRmax \sum_{j=0}^J \ltwonorm{\curl r_j}^2
    = \scaledRmax \sum_{j=0}^J \Honenorm{r_j}^2
    \\
    &\leq c \, \scaledRmax \left(1+\frac{H}{\delta}\right) \Honenorm{r}^2
    = c \, \scaledRmax \left(1+\frac{H}{\delta}\right) \ltwonorm{\curl r}^2
    \\
    &\leq c \, \frac{\scaledRmax}{\scaledRmin} \left(1+\frac{H}{\delta}\right) \kform(\bv^0,\bv^0)
\end{aligned}
\end{gather}
where we have used~\eqref{eq:normequivalence_k}. The assertion of the lemma follows then
by choosing $\bw^0_j = (\bu^0_j, \bv^0_j) = (\curl s_j, \curl r_j) \in\Wj[0]$ for all $j=0,\ldots,J$.
\end{proof}

We are ready to prove the main result of this section.

\begin{theorem}[Stable decomposition]\label{lemma:StableDecomp}
Every $\bw \in W_h$ admits a decomposition of the form $\bw = \sum_{j=0}^J \bw_j$
with $\bw_j\in \Wj$, which satisfies the bound
\begin{gather*}
    \sum_{j=0}^J \locsingularform(\bw_j,\bw_j)
    \leq \constK \left(1+\frac{H^4}{\delta^4}\right) \singularform(\bw,\bw),
\end{gather*}
where $\constK=c \max\set{\nfrac{\scaledRmax}{\scaledRmin},\scaledRmax}$ for some constant $c>0$
that is independent of the model parameters in the discrete bilinear form $\singularform(\cdot,\cdot)$ defined in~\eqref{eq:SPP-discretebilinearform} and independent of the number of subdomains $J$ as well as of the discretization parameters $h$ and $\tau$.
\end{theorem}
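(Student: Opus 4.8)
The plan is to combine the $\Ehform$-orthogonal Helmholtz-type splitting $\Wh=\Wh[0]\oplus\Wh[\perp]$ from \cref{lemma:decompproperties} with the component-wise stable decompositions already established: \cref{lemma:decomposition_kernel} for the kernel part $\bw^0\in\Wh[0]$, and \cref{lemma:decomposition_e_perp} together with \cref{lemma:decomposition_e_null} for the complement. Concretely, given $\bw=(\bu,\bv)\in\Wh$, I would first write $\bw=\bw^0+\bw^\perp$ with $\bw^0=(\bu^0,\bv^0)\in\Wh[0]=U_h^0\times V_h^0$ and $\bw^\perp=(\bu^\perp,\bv^\perp)\in\Wh[\perp]=U_h^\perp\times V_h^\perp$. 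By \cref{lemma:decomposition_kernel} the kernel part splits as $\bw^0=\sum_{j=0}^J\bw^0_j$ with $\bw^0_j\in\Wj[0]$ and $\sum_j\locsingularform(\bw^0_j,\bw^0_j)\le\constK(1+H^4/\delta^4)\,\singularform(\bw^0,\bw^0)$. For the complement part I would treat the two components separately: decompose $\bu^\perp=\sum_{j=0}^J\bu^1_j$ via \cref{lemma:decomposition_e_perp}, and decompose $\bv^\perp\in V_h^\perp$ using the $\Hdiv$ analogue of that construction (the same partition-of-unity/interpolation argument applied in $\Hdiv$, controlled by the $\Hdivnorm{\cdot}$-stable operators $P^h$ and $\Ltwoprojection[H]$ already cited in \eqref{eq:PH-bounded-discrete}--\eqref{eq:H1stabilityL2projection}), giving $\bv^\perp=\sum_{j=0}^J\bv^1_j$ with $\bv^1_j\in V_j$ and $\sum_j\Hdivnorm{\bv^1_j}^2\le c(1+H^2/\delta^2)\Hdivnorm{\bv^\perp}^2$. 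Setting $\bw_j=\bw^0_j+(\bu^1_j,\bv^1_j)\in\Wj$ yields a decomposition of $\bw$.

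The estimate is then assembled by bounding $\locsingularform(\bw_j,\bw_j)=\locsingularform[j]\!\big(\bw^0_j+(\bu^1_j,\bv^1_j)\big)$ using the Cauchy--Schwarz inequality for the bilinear form $\locsingularform$ and $\sum_j(a_j+b_j)^2\le 2\sum_j a_j^2+2\sum_j b_j^2$, reducing matters to bounding $\sum_j\locsingularform(\bw^0_j,\bw^0_j)$ and $\sum_j\locsingularform\big((\bu^1_j,\bv^1_j),(\bu^1_j,\bv^1_j)\big)$ separately. The first is already controlled. For the second, I would expand $\locsingularform$ into its $\Ehform$ part (which is $\eehform(\bu^1_j,\bu^1_j)+\kform(\bv^1_j,\bv^1_j)$, bounded via \cref{lemma:decomposition_e_perp}, the finite-covering colouring, and $\kform(\bv^1_j,\bv^1_j)\le\scaledRmax\ltwonorm{\bv^1_j}^2$) and its $\Dform$ part, i.e.\ the terms $\scaledlambda\ddform(\bu^1_j,\bu^1_j)$ and $\scaledstorage^{-1}\ltwonorm{\div(\bu^1_j+\bv^1_j)}^2$. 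The divergence terms are handled by noting $\sum_j\ltwonorm{\div(\bu^1_j+\bv^1_j)}^2\lesssim\ltwonorm{\div(\bu^\perp+\bv^\perp)}^2$ using that $\interpolant$ and $\Ltwoprojection[H]$ commute appropriately with $\div$ (a consequence of the commuting-diagram / mass-conservation property \eqref{eq:massconservationcondition}) together with the colouring, so these contribute to $\singularform(\bw^\perp,\bw^\perp)$ with the stated factor. Finally, since the splitting $\bw=\bw^0+\bw^\perp$ is $\Ehform$-orthogonal and one checks $\Dform(\bw^0,\bw^\perp)=0$ as well (both $\div\bu^0=0$ and $\div(\bu^0+\bv^0)=0$), we get $\singularform(\bw^0,\bw^0)+\singularform(\bw^\perp,\bw^\perp)=\singularform(\bw,\bw)$, so the two contributions add cleanly and the constant becomes $\constK=c\max\{\scaledRmax/\scaledRmin,\scaledRmax\}$ with the geometric factor $1+H^4/\delta^4$ dominating $1+H^2/\delta^2$.

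I expect the main obstacle to be the treatment of the seepage-velocity complement $\bv^\perp\in V_h^\perp$ and, relatedly, the cross term $\scaledstorage^{-1}\ltwonorm{\div(\bu^1_j+\bv^1_j)}^2$: one must decompose $\bv^\perp$ in a way that is simultaneously $\Hdiv$-stable \emph{and} compatible with the local spaces $V_j=V_h\cap\Hdiv_0(\Omega_j)$ (so the pieces have vanishing normal trace on $\partial\Omega_j$), and one must ensure the divergences of the $\bu$- and $\bv$-pieces combine so that the $\scaledstorage^{-1}$-weighted term stays bounded uniformly as $\scaledstorage\to0$ — this is exactly the point where the commuting interpolation operators and the matching-polynomial-degree hypothesis \eqref{eq:massconservationcondition} are essential. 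A secondary technical point is verifying $\scaledRmax$-robustness of the $\kform$-contribution from $\bv^1_j$: bounding $\kform(\bv^1_j,\bv^1_j)$ by $\scaledRmax\ltwonorm{\bv^1_j}^2$ and then $\ltwonorm{\bv^1_j}$ by $\Hdivnorm{\bv^\perp}$ only gives the crude factor $\scaledRmax$ rather than a contrast $\scaledRmax/\scaledRmin$, which is why the final constant is $\max\{\scaledRmax/\scaledRmin,\scaledRmax\}$ rather than just $\scaledRmax/\scaledRmin$; this matches the statement and the \cref{lemma:decomposition_kernel} estimate, so no sharper argument is needed here.
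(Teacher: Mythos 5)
Your overall architecture (split off $\kerDform$, reuse \cref{lemma:decomposition_kernel}, then handle the complement with the partition-of-unity constructions) matches the paper up to the treatment of $\bw^\perp=(\bu^\perp,\bv^\perp)$, but there the proposal has a genuine gap. You decompose $\bu^\perp$ and $\bv^\perp$ \emph{independently} and then claim $\sum_j\ltwonorm{\div(\bu^1_j+\bv^1_j)}^2\lesssim\ltwonorm{\div(\bu^\perp+\bv^\perp)}^2$ via commuting-diagram properties of $\interpolant$ and $\Ltwoprojection[H]$. This is exactly the step that fails, and it is the crux of the whole theorem: as $\scaledstorage\to 0$ the individual divergences $\div\bu^\perp$ and $\div\bv^\perp$ may each be large while their sum nearly cancels, and your two local constructions (the $\Ltwoprojection[H]P^h$-based one for $\bu^\perp$ and the Arnold--Falk--Winther-type one for $\bv^\perp$) do not reproduce that cancellation locally. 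Even if you forced the same partition of unity and the same interpolation on both fields, $\div\interpolant(\theta_j\bt)$ picks up terms of the type $\nabla\theta_j\cdot\bt$, which are only controlled by $\delta^{-1}\ltwonorm{\tilde\bu+\tilde\bv}\lesssim (H/\delta)\bigl(\dgnormone{\bu^\perp}+\Hdivnorm{\bv^\perp}\bigr)$; these quantities enter $\singularform(\bw,\bw)$ only with $O(1)$ weight, so after multiplication by $\scaledstorage^{-1}$ the resulting bound blows up as $\scaledstorage\to0$ and the claimed uniformity in the model parameters is lost. Separate weight-robust estimates for the two fields give at best $\scaledstorage^{-1}\bigl(\ddform(\bu^\perp,\bu^\perp)+\ddform(\bv^\perp,\bv^\perp)\bigr)$, which is not controlled by $\scaledstorage^{-1}\ddform(\bu^\perp+\bv^\perp,\bu^\perp+\bv^\perp)$.

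The paper's proof avoids this by a different, coupled decomposition of the complement: it writes $\bw^\perp=(\bu^\perp,-\bu^\perp)+(\bm 0,\bphi)$ with $\bphi=\bu^\perp+\bv^\perp$ placed entirely in the velocity slot (using $U_h\subset V_h$), and then splits $\bphi=\bphi^0+\bphi^\perp$ in the $\kform(\cdot,\cdot)$-orthogonal Helmholtz decomposition of $V_h$. On the piece $(\bu^1_j,-\bu^1_j)$ the summed divergence vanishes identically, so no $\scaledstorage^{-1}$-term appears at all; on the piece $(\bm 0,\bphi^1_j)$ the weight-robust $\Hdiv$ decomposition produces exactly $\scaledstorage^{-1}\ddform(\bphi^\perp,\bphi^\perp)=\scaledstorage^{-1}\ddform(\bu^\perp+\bv^\perp,\bu^\perp+\bv^\perp)$, which is a term of $\singularform(\bw^\perp,\bw^\perp)$. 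The price is the extra term $\kform(\bu^\perp,\bu^\perp)$, absorbed by Poincar\'e/Korn with a factor $\scaledRmax$ --- this, not your argument about $\kform(\bv^1_j,\bv^1_j)$, is the actual source of the $\max\set{\nfrac{\scaledRmax}{\scaledRmin},\scaledRmax}$ in $\constK$. To repair your proof you would need to adopt this (or an equivalent) coupled splitting of $\bu^\perp+\bv^\perp$; as written, the uniformity in $\scaledstorage$ cannot be established.
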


\begin{proof}
For $\bw = {\bm 0}$ the result is trivial. So let $\bw \in W_h$, $\bw \ne {\bm 0}$, be arbitrary.
For the sake of simplicity, we indicate dependencies of constants by subscripts.
To start with, we decompose $\bw$ in the form
\begin{align*}
\bw := \vv{\bu}{\bv} &= \vv{\bu^0}{\bv^0} + \vv{\bu^\perp}{\bv^\perp},
\end{align*}
with $(\bu^0,\bv^0)\in U^0\times V^0$ and $(\bu^\perp,\bv^\perp)\in U^\perp\times V^\perp$.
Since we need to control the term $\ddform(\bu+\bv,\bu+\bv)$, we 
define $\bphi = \bu^\perp + \bv^\perp$
and decompose it into $\bphi=\bphi^0+\bphi^\perp$, $\bphi^0\in V^0$, where $\bphi^0$ and $\bphi^\perp$ are orthogonal with respect to
$\kform(\cdot,\cdot)$.
Then, the decomposition of $\bw$ that we will use in our proof reads as
\begin{gather} \label{eq:decomposition_global}
    \begin{aligned}
    \bw = \vv{\bu}{\bv}
    &= \vv{\bu^0}{\bv^0} + \vv{\bu^\perp}{-\bu^\perp} + \vv{\bm 0}{\bphi}
    \\
    =& \vv{\bu^0}{\bv^0} + \vv{\bm 0}{\bphi^0} + \vv{\bu^\perp}{-\bu^\perp} + \vv{\bm 0}{\bphi^\perp}
    =: \bw^0+\bw^1+\bw^2+\bw^3.
    \end{aligned}
\end{gather}

\noindent
We decompose each component of each summand in~\eqref{eq:decomposition_global} according to
\begin{align*}
    &\sum_{j=0}^J \bu_j^0 = \bu^0,
    &&\sum_{j=0}^J \bv_j^0 = \bv^0,
    &&\sum_{j=0}^J \bphi_j^0 = \bphi^0, \\
    &\sum_{j=0}^J \bu_j^1 = \bu^\perp,
    &&\sum_{j=0}^J \bv_j^1 = \bv^\perp,
    &&\sum_{j=0}^J \bphi_j^1 = \bphi^\perp,
\end{align*}
where $\bu_j^0\in U_j^0$, $\bv_j^0\in V_j^0$, $\bphi_j^0\in V_j^0$, $\bu_j^1\in U_j$, $\bv_j^1\in V_j$ and $\bphi_j^1\in V_j$ are specified below. 
The superscript $1$ of $\bu_j^1$, $\bv_j^1$ and $\bphi_j^1$ indicates that these terms are not orthogonal to $\bu_j^0$, $\bv_j^0$ and $\bphi_j^0$ with respect to the inner products $\eehform(\cdot,\cdot)$, $\kform(\cdot,\cdot)$ and $\kform(\cdot,\cdot)$, respectively.
Now, we define $\bw_j:=\bw_j^0 + \bw_j^1 + \bw_j^2 + \bw_j^3$, where
\begin{align*}
    \bw_j^0 &= \vv{\bu_j^0}{\bv_j^0} \in W_j^0, \qquad& \bw_j^1 &= \vv{\bm 0}{\bphi_j^0} \in W_j^0, \\
    \bw_j^2 &= \vv{\bu_j^1}{-\bu_j^1} \in W_j, & \bw_j^3 &= \vv{\bm 0}{\bphi_j^1} \in W_j,
\end{align*}
and herewith estimate
\begin{gather}
\label{eq:local_decomp_a}
  \begin{aligned}
    \sum_{j=0}^J \locsingularform(\bw_j,\bw_j)
    &\leq c\sum_{j=0}^J \left(\locsingularform(\bw_j^0,\bw_j^0) + \locsingularform(\bw_j^1,\bw_j^1) \right.\\
    &\left.\hspace{4em} + \locsingularform(\bw_j^2,\bw_j^2) + \locsingularform(\bw_j^3,\bw_j^3) \right).
  \end{aligned}
\end{gather}
For the decomposition in the orthogonal complement $W_h^\perp$, we will use the stability estimates
\begin{align}
    \kform(\bphi^0,\bphi^0) &\leq \kform(\bphi,\bphi) = \kform(\bu^\perp+\bv^\perp,\bu^\perp+\bv^\perp),  \label{eq:stab-phinull}\\
    \kform(\bphi^\perp,\bphi^\perp) &\leq \kform(\bphi,\bphi) = \kform(\bu^\perp+\bv^\perp,\bu^\perp+\bv^\perp)  \label{eq:stab-phiperp}.
\end{align}
Further, we note that
\begin{gather}
    \ddform(\bphi^\perp,\bphi^\perp) = \ddform(\bphi,\bphi) = \ddform(\bu^\perp+\bv^\perp,\bu^\perp+\bv^\perp)
    \label{eq:equal-divphiperp}.
\end{gather}
Then, the stability of the $L^2$-decomposition \eqref{eq:ltwo_decomp_hdivconf} and the estimate \eqref{eq:stab-phinull} yield
\begin{gather} \label{eq:local_decomp_wone}
  \begin{aligned}
    \sum_{j=0}^J \locsingularform(\bw_j^1,\bw_j^1)
    = \sum_{j=0}^J & \kform(\bphi_j^0,\bphi_j^0)
    \leq c_{H,\delta,\scaledpermeability} \kform(\bphi^0,\bphi^0)
    \\
    &\leq c_{H,\delta,\scaledpermeability} \left(\kform(\bu^\perp,\bu^\perp) + \kform(\bv^\perp,\bv^\perp) \right).
  \end{aligned}
\end{gather}
Moreover, by~\cref{lemma:decomposition_e_perp} and \cite{ToselliWidlund10,ArnoldFalkWinther97Hdiv}, we have
\begin{gather} \label{eq:local_decomp_wtwo}
  \begin{aligned}
    \sum_{j=0}^J \locsingularform(\bw_j^2,\bw_j^2)
    &= \sum_{j=0}^J \left(\eejform(\bu_j^1,\bu_j^1) + \kform(\bu_j^1,\bu_j^1) + \scaledlambda \ddform(\bu_j^1,\bu_j^1)\right)
    \\
    &\leq c_{H,\delta} \left(\eehform(\bu^\perp,\bu^\perp) + \kform(\bu^\perp,\bu^\perp) + \scaledlambda \ddform(\bu^\perp,\bu^\perp) \right).
  \end{aligned}
\end{gather}
Again, by~\cite{ToselliWidlund10,ArnoldFalkWinther97Hdiv}, stability estimate \eqref{eq:stab-phiperp}, and equality~\eqref{eq:equal-divphiperp}, 
we obtain
\begin{align}
    \sum_{j=0}^J \locsingularform(\bw_j^3,\bw_j^3)
    &= \sum_{j=0}^J \left( \kform(\bphi_j^1,\bphi_j^1) + \scaledstorage^{-1} \ddform(\bphi_j^1,\bphi_j^1) \right) \nonumber
    \\
    &\leq c_{H,\delta} \left( \kform(\bphi^\perp,\bphi^\perp) + \scaledstorage^{-1} \ddform(\bphi^\perp,\bphi^\perp) \right) \label{eq:local_decomp_wthree}
    \\
    &\leq c_{H,\delta} \left( \kform(\bu^\perp,\bu^\perp) + \kform(\bv^\perp,\bv^\perp) + \scaledstorage^{-1} \ddform(\bu^\perp+\bv^\perp,\bu^\perp+\bv^\perp) \right). \nonumber
\end{align}
Due to the special choice of the decomposition~\eqref{eq:decomposition_global} the term $\kform(\bu^\perp,\bu^\perp)$ appears in~\eqref{eq:local_decomp_wthree},
which we further estimate using the Poincar\'e and Korn inequalities
\begin{gather}
    \label{eq:poincare}
    \kform(\bu^\perp,\bu^\perp) \leq c_{\Omega} \scaledRmax \eehform(\bu^\perp,\bu^\perp).
\end{gather}

Thus, by \eqref{eq:local_decomp_a}, \cref{lemma:decomposition_kernel} and collecting the estimates
\eqref{eq:local_decomp_wone}, \eqref{eq:local_decomp_wtwo}, \eqref{eq:local_decomp_wthree}, and
\eqref{eq:poincare}, we obtain with $\bw^\perp=(\bu^\perp,\bv^\perp)^T\in W_h^\perp$
\begin{align*}
    \sum_{j=0}^J &\locsingularform(\bw_j,\bw_j)
    \\
    \leq& c_{H,\delta,\scaledpermeability} \Bigl( \eehform(\bu^0,\bu^0) + \kform(\bv^0,\bv^0)
    \\
    &\quad + \eehform(\bu^\perp,\bu^\perp) + \kform(\bu^\perp,\bu^\perp) + \scaledlambda \ddform(\bu^\perp,\bu^\perp)
    + \scaledstorage^{-1} \ddform(\bu^\perp+\bv^\perp,\bu^\perp+\bv^\perp) \Bigr)
    \\
    \leq& c_{H,\delta,\scaledpermeability,\Omega} \Bigl(
    \singularform(\bw^0,\bw^0) + \singularform(\bw^\perp,\bw^\perp)
    \Bigr)
    \\
    =&  c_{H,\delta,\scaledpermeability,\Omega} \singularform(\bw,\bw)
\end{align*}
where the constant $c_{H,\delta,\scaledpermeability,\Omega}$ for a proper overlap $\delta$ depends only on~$\Omega$ and the bounds $\scaledRmin$ and $\scaledRmax$.
\end{proof}

%%%%%%%%%%%%%%%%%%%%%%%%%%%%%%%%%%%%%%%%%%%%%%%%%%
\subsection{Results for the singularly perturbed problem}
\label{section:SPP-convergence}
%%%%%%%%%%%%%%%%%%%%%%%%%%%%%%%%%%%%%%%%%%%%%%%%%%
We now prove convergence of the Schwarz methods for the singularly perturbed problem by applying the theory for symmetric positive definite problems as in~\cite{ToselliWidlund10}.
Therefore, we further establish a local stability estimate in \cref{lemma:SPP-localstability} and strengthened Cauchy-Schwarz inequalities in~\eqref{eq:SPP-strengthenedCS}, which together with the stable decomposition in \cref{lemma:StableDecomp} and  \cref{assumption:finite-covering,assumption:sufficient-overlap} on the covering of the domain prove the results of this section, i.e., \cref{theorem:SPP-additive,theorem:SPP-multiplicative}.

Local stability, according to~\cite{ToselliWidlund10}, means that the estimates
\begin{gather*}%\label{eq:SPP-localstability}
    \singularform(\bw_j,\bw_j) \le \localstabconstant \locsingularform(\bw_j,\bw_j) \qquad\forall \bw_j\in\Wj
\end{gather*}
hold true for all $0\le j\le J$ with a constant $\localstabconstant>0$.
Since we have chosen exact bilinear forms on the local spaces, i.e. $\locsingularform(\cdot,\cdot) = \singularform(\cdot,\cdot)$ restricted to $\Wj$,
local stability is trivially fulfilled with $\localstabconstant=1$ for all $1\le j\le J$.
Thus, it remains to consider the case $j=0$.

\begin{lemma}[Local stability]\label{lemma:SPP-localstability}
For each $\bw_0\in W_0$ it holds
\begin{gather*}
    \singularform(\bw_0,\bw_0) \le \frac{H}{h} \locsingularform[0](\bw_0,\bw_0).
\end{gather*}
\end{lemma}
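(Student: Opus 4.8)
The only difference between $\singularform(\cdot,\cdot)$ and $\locsingularform[0](\cdot,\cdot)$ on $W_0$ is that the elasticity part uses the fine DG form $\eehform(\cdot,\cdot)$ versus the coarse DG form $\eeHform(\cdot,\cdot)$; the terms $\kform$, $\scaledlambda\ddform$, and $\scaledstorage^{-1}\ddform$ are genuinely inherited because $U_H\subset U_h$, $V_H\subset V_h$, and the divergence is computed the same way. So the statement reduces to a single \emph{elementwise} comparison: for all $\bu_0\in U_H$,
\begin{gather*}
    \eehform(\bu_0,\bu_0) \le \frac{H}{h}\, \eeHform(\bu_0,\bu_0).
\end{gather*}
The plan is to estimate each of the three groups of terms in $\eehform(\cdot,\cdot)$ from~\eqref{eq:eehform} — the cellwise strain terms, the penalty terms, and the consistency (flux) terms — against the corresponding groups in the coarse form $\eeHform(\cdot,\cdot)$.

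First, the cellwise strain contribution $\sum_{\cell\in\mesh}\ltwonorm[\cell]{\strain{\bu_0}}^2$ equals $\sum_{\cell\in\coarsemesh}\ltwonorm[\cell]{\strain{\bu_0}}^2$ since $\bu_0$ is smooth on each coarse cell and the fine mesh refines the coarse one; this part costs nothing. Next, the penalty terms: on a coarse cell, $\bu_0\in[\mixedP_k(\cell)]^2$, so across a fine interior face $\face$ \emph{inside} a coarse cell the jump $\jump{\bu_0}$ vanishes, while across a fine face lying on a coarse face the jump coincides with the coarse jump; similarly for boundary faces. The penalty weight on a fine face is $\eta/h$ versus $\eta/H$ on the coarse face, so the fine penalty term is bounded by $(H/h)$ times the coarse penalty term. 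The consistency terms are handled the same way: $\jump{\bu_0}$ (and $\bu_0$ on $\setofboundaries$) is supported only on the skeleton of $\coarsemesh$, and on a coarse face $\face_H$ made up of fine faces of total measure $|\face_H|$ one uses Cauchy--Schwarz together with a discrete trace inequality $\ltwonorm[\face]{\mean{\strain{\bu_0}\bn}}\le c\, h^{-1/2}\ltwonorm[\cell]{\strain{\bu_0}}$ on fine faces versus $\ltwonorm[\face_H]{\mean{\strain{\bu_0}\bn}}\le c\, H^{-1/2}\ltwonorm[\cell_H]{\strain{\bu_0}}$ on the coarse face — again producing a factor $H/h$. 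Combining, and using coercivity~\eqref{eq:coercivity_e} of $\eeHform(\cdot,\cdot)$ on $U_H$ to absorb the strain term into $\eeHform(\bu_0,\bu_0)$, yields $\eehform(\bu_0,\bu_0)\le c\,(H/h)\,\eeHform(\bu_0,\bu_0)$.

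The main obstacle is bookkeeping rather than a genuine difficulty: one must be careful that the fine faces partition the coarse skeleton exactly (no new jumps are created in the interior of coarse cells because $\bu_0$ is a polynomial there and $\Hdiv$-conforming across coarse faces only in the normal component, but the DG jumps here act on tangential components, which is precisely the setting in which the coarse form is defined), and that the penalty parameter $\eta$ and the trace-inequality constants are the same on both meshes by shape regularity, so all hidden constants are uniform in $h$, $H$, and the model parameters. The clean statement $H/h$ (with no extra constant) in the lemma is the optimistic version of this estimate; the proof as sketched gives $c\,H/h$, which is what the subsequent convergence theory actually uses, so I would either track constants carefully to recover the sharp form or simply note that a constant is absorbed.
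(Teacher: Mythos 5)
Your argument is essentially the paper's: reduce to the elasticity forms (all other terms of $\locsingularform[0](\cdot,\cdot)$ are inherited exactly), observe that jumps of a coarse function vanish on fine faces interior to coarse cells, and compare the penalty weights $\eta/h$ versus $\eta/H$ on the coarse skeleton. The only deviation is your trace-inequality treatment of the consistency terms, which is unnecessary: those terms carry no mesh-size weight and the jumps live only on the coarse skeleton, so they are identical in $\eehform(\cdot,\cdot)$ and $\eeHform(\cdot,\cdot)$, which is how the paper states the clean factor $H/h$ without the extra constant you flag.
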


\begin{proof}
On the coarse space, we have chosen a non-inherited form for the approximation
of the elasticity bilinear form, namely $\eeHform(\cdot,\cdot)$ instead of $\eehform(\cdot,\cdot)$, which differs in the face
and boundary terms because of the different cell size $H\ge h$. Due to the continuity of
the coarse displacement functions $\bu_0 = \bu_H \in U_0$ on every coarse cell, the jump terms $\ltwonorm[\face]{\jump{\bu_H}}$
vanish for all faces $\face$ that lie in the interior of a coarse cell. Therefore, we have
\begin{gather}\label{eq:localstab-eehform}
    \eehform(\bu_0,\bu_0) \leq \frac{H}{h} \eeHform(\bu_0,\bu_0).
\end{gather}
Since all other terms of the coarse space operator $\locsingularform[0](\cdot,\cdot)$
are chosen to be exact, the statement of \cref{lemma:SPP-localstability} follows by estimate~\eqref{eq:localstab-eehform}.
\end{proof}

\begin{remark}\label{remark:relaxation-multiplicative}
The convergence theory for multiplicative Schwarz methods in \cite{ToselliWidlund10} requires that $0<\localstabconstant<2$.
In a typical application scenario, the coarse mesh size might be $H=2h$ resulting in a factor $\nfrac{H}{h}=2$ which violates the above condition.
Hence, an additional relaxation factor $\relaxationconstant<1$ has to be introduced in the definition of the coarse bilinear form, i.e.,
$\locsingularform[0](\cdot,\cdot)=\relaxationconstant\coarsesingularform(\cdot,\cdot)$,
that scales $\coarsesingularform(\cdot,\cdot)$ such that $\relaxationconstant\nfrac{H}{h}<2$.
However, we would like to point out that this scaling is not required in our experiments and we choose $\relaxationconstant=1$ in these cases.
\end{remark}

By \cref{assumption:sufficient-overlap} and standard arguments, see \cite{ToselliWidlund10}, there hold the strengthened Cauchy-Schwarz inequalities
\begin{gather}\label{eq:SPP-strengthenedCS}
    \abs{\singularform(\bw_i,\bw_j)}
    \leq \CSconstant \singularform(\bw_i,\bw_i)^{\frac12} \singularform(\bw_j,\bw_j)^{\frac12}
\end{gather}
for $\bw_i\in W_i$, $\bw_j\in\Wj$, $1\le i,j\le J$, and constants $0\leq\CSconstant\leq1$.
Alongside \cite{ToselliWidlund10} we denote the spectral radius of $\mathcal{E} = \left\{\CSconstant\right\}$ by $\spectralradius$.

The maximal and minimal eigenvalues of $\singularPad \singularoperator$ are characterized by the Rayleigh quotients
\begin{align*}
    \eigenvalue_{\max}(\singularPad \singularoperator)
    &= \sup_{\bw\in W_h} \frac{\singularform(\singularPad \singularoperator \bw,\bw)}{\singularform(\bw,\bw)},
    \\
    \eigenvalue_{\min}(\singularPad \singularoperator)
    &= \inf_{\bw\in W_h} \frac{\singularform(\singularPad \singularoperator \bw,\bw)}{\singularform(\bw,\bw)}.
\end{align*}
We are now ready to state and prove the main results of this section, \cref{theorem:SPP-additive,theorem:SPP-multiplicative}.

\begin{theorem}
\label{theorem:SPP-additive}
The two-level additive Schwarz preconditioner~\eqref{eq:SPP-additiveschwarz} for the singularly perturbed system~\eqref{eq:singularproblem} satisfies
\begin{gather*}
    \eigenvalue_{\min}(\singularPad \singularoperator)
    \ge \left(\constK \left(1+\frac{H^4}{\delta^4}\right)\right)^{-1},
    \qquad
    \eigenvalue_{\max}(\singularPad \singularoperator)
    \le \frac{H}{h} \left(\spectralradius + 1 \right),
\end{gather*}
where the constant $\constK$ depends on the bounds $\scaledRmin$ and $\scaledRmax$ from~\eqref{eq:normequivalence_k}, but is independent of the mesh size $h$ and the other model parameters $\scaledlambda$, and $\scaledstorage$.
In particular, we obtain a uniform preconditioner under the assumption that $H \le c h$ for some constant $c$.
\end{theorem}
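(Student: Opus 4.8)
The plan is to invoke the abstract convergence theory for additive Schwarz methods applied to symmetric positive definite problems, as presented in \cite{ToselliWidlund10}, whose hypotheses are precisely the three ingredients we have just assembled: a stable decomposition, local stability, and strengthened Cauchy--Schwarz inequalities. The lower bound on $\eigenvalue_{\min}(\singularPad \singularoperator)$ is the direct consequence of the stable decomposition \cref{lemma:StableDecomp}: for any $\bw\in W_h$ there is a decomposition $\bw=\sum_{j=0}^J\bw_j$ with $\bw_j\in\Wj$ satisfying $\sum_{j=0}^J\locsingularform(\bw_j,\bw_j)\le C_0^2\,\singularform(\bw,\bw)$ with $C_0^2=\constK(1+H^4/\delta^4)$. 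By the standard variational characterization of the additive operator (with relaxation $\relaxation$ absorbed appropriately, or set to one) this yields $\eigenvalue_{\min}(\singularPad\singularoperator)\ge C_0^{-2}$, which is exactly the claimed bound. Here I would simply cite \cite[Theorem 2.7]{ToselliWidlund10} (or the analogue stated there) rather than reproving it.

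For the upper bound, the abstract theory gives $\eigenvalue_{\max}(\singularPad\singularoperator)\le \relaxation\bigl(\spectralradius+\localstabconstant\bigr)$ where $\localstabconstant$ is the local stability constant and $\spectralradius$ is the spectral radius of the strengthened Cauchy--Schwarz matrix $\mathcal{E}=\{\CSconstant\}$ over the fine-level indices $1,\dots,J$. We have established local stability in \cref{lemma:SPP-localstability} with $\localstabconstant = H/h$ for $j=0$ and $\localstabconstant=1$ for $j\ge1$, and the strengthened Cauchy--Schwarz inequalities in \eqref{eq:SPP-strengthenedCS} with $\spectralradius$ controlled by the finite covering \cref{assumption:finite-covering} (in fact $\spectralradius\le N^c$). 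Taking $\relaxation=1$ and using that the coarse term contributes an extra $+1$ beyond the colouring bound, one obtains $\eigenvalue_{\max}(\singularPad\singularoperator)\le (H/h)(\spectralradius+1)$, matching the statement. I would present this as: the fine-level corrections contribute $\spectralradius$, the coarse correction contributes its local stability constant $H/h$, and after pulling out the common factor $H/h$ the bound follows.

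The parameter-robustness claim then comes for free: inspecting \cref{lemma:StableDecomp}, the constant $\constK = c\max\{\scaledRmax/\scaledRmin,\scaledRmax\}$ depends only on $\scaledpermeability$ (through its two-sided bounds), on $\Omega$, and on the overlap geometry, but is independent of $\scaledlambda$, $\scaledstorage$, $h$, and $\tau$. Likewise $\spectralradius$ depends only on the colouring number $N^c$. Hence with the assumption $H\le ch$ the ratio $H/h$ is bounded by a constant and both eigenvalue bounds are uniform, giving a uniform preconditioner. The only point requiring care — and the main obstacle in writing this cleanly — is bookkeeping of the relaxation factor $\relaxation$ and the non-symmetry caused by $\singularP_0$ not being a genuine $\singularform$-orthogonal projection (because $\locsingularform[0]=\coarsesingularform\neq\singularform|_{W_0}$). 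I would handle this exactly as in \cite{ToselliWidlund10,Schoeberl99dissertation}: local stability \cref{lemma:SPP-localstability} is precisely the estimate that compensates for the non-inherited coarse form, so that $\singularP_0$ behaves, up to the factor $H/h$, like a projection, and the abstract additive-Schwarz bounds apply verbatim with $\localstabconstant=H/h$. No further calculation is needed beyond citing the abstract theorem and substituting the constants from \cref{lemma:StableDecomp,lemma:SPP-localstability} and \eqref{eq:SPP-strengthenedCS}.
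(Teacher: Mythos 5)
Your proposal is correct and follows essentially the same route as the paper: the paper's proof likewise consists of observing that $\singularform(\cdot,\cdot)$ is symmetric positive definite (\cref{lemma:biot:posdef}) and then invoking the abstract condition number estimate \cite[Theorem 2.7]{ToselliWidlund10}, with the stable decomposition (\cref{lemma:StableDecomp}), the local stability bound $\localstabconstant=H/h$ (\cref{lemma:SPP-localstability}), and the strengthened Cauchy--Schwarz inequalities~\eqref{eq:SPP-strengthenedCS} supplying exactly the constants you substitute. Your extra bookkeeping of $\relaxation$ and of the non-inherited coarse form is consistent with how the paper handles these points.
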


\begin{proof}
  By \cref{lemma:biot:posdef} we are able to apply the Schwarz convergence theory for symmetric positive definite problems, cf. \cite[Chapter 2]{ToselliWidlund10}, to the singularly perturbed problem.
  In particular, the stable decomposition from \cref{lemma:StableDecomp}, the local stability estimate from \cref{lemma:SPP-localstability}, and the strengthened Cauchy-Schwarz inequalities~\eqref{eq:SPP-strengthenedCS} constitute the assumptions of the abstract condition number estimate in~\cite[Theorem 2.7]{ToselliWidlund10}. Hence, we can apply this result which concludes the proof.
\end{proof}

\begin{theorem}
\label{theorem:SPP-multiplicative}
Under the same assumptions as in \cref{theorem:SPP-additive}, the multiplicative Schwarz method~\eqref{eq:SPP-multiplicativeschwarz} for the singularly perturbed problem converges with a contraction number depending on the bounds $\scaledRmin$ and $\scaledRmax$ from equation~\eqref{eq:normequivalence_k}, but independent of the mesh size $h$ and the other model parameters $\scaledlambda$, and $\scaledstorage$.
\end{theorem}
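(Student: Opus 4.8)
The plan is to invoke the abstract convergence theory for multiplicative Schwarz methods applied to symmetric positive definite problems, exactly as in \cite[Chapter 2]{ToselliWidlund10}, on the singularly perturbed bilinear form $\singularform(\cdot,\cdot)$. By \cref{lemma:biot:posdef} this form is symmetric and coercive with respect to $\Whnorm{\cdot}$, so the SPD machinery applies. The three ingredients needed to bound the contraction number are precisely the ones assembled in this subsection: the stable decomposition of \cref{lemma:StableDecomp} with splitting constant $C_0^2 = \constK(1+H^4/\delta^4)$; the local stability estimates with $\localstabconstant = 1$ for $j=1,\dots,J$ and $\localstabconstant = H/h$ (or $\relaxationconstant H/h$, cf.~\cref{remark:relaxation-multiplicative}) for $j=0$ from \cref{lemma:SPP-localstability}; and the strengthened Cauchy-Schwarz inequalities~\eqref{eq:SPP-strengthenedCS} together with the coloring \cref{assumption:finite-covering}, which control the spectral radius $\spectralradius$.

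Concretely, I would first recall the convergence result for the multiplicative operator $\singularEmu$ from \cite[Theorem 2.9]{ToselliWidlund10} (or the corresponding statement there), which states that if local stability holds with $0 < \localstabconstant < 2$, then the error propagation operator satisfies an estimate of the form
\begin{gather*}
    \norm{\singularEmu}_{\singularoperator}^2 \le 1 - \frac{2-\localstabconstant}{C_0^2\,(1 + \localstabconstant\,\spectralradius)^2}\,,
\end{gather*}
in the energy norm induced by $\singularform(\cdot,\cdot)$. I would then substitute $C_0^2 = \constK(1+H^4/\delta^4)$ from \cref{lemma:StableDecomp} and $\localstabconstant = \relaxationconstant H/h < 2$ (ensured by the scaling of the coarse form in \cref{remark:relaxation-multiplicative}), while $\spectralradius$ and $N^c$ are independent of all model and discretization parameters by \cref{assumption:finite-covering,assumption:sufficient-overlap}. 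Under the hypothesis $H \le c h$ the factor $1 + H^4/\delta^4$ is bounded by a constant (using $\delta \ge c_1 h$ from \cref{assumption:sufficient-overlap}), so the right-hand side above is a fixed number strictly less than $1$. The only genuine dependence left is through $\constK = c\max\set{\nfrac{\scaledRmax}{\scaledRmin},\scaledRmax}$, i.e.\ on the permeability bounds $\scaledRmin,\scaledRmax$ from~\eqref{eq:normequivalence_k}; crucially, $\constK$ does not involve $\scaledlambda$, $\scaledstorage$, $h$, or $\tau$.

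The only real subtlety — and the step I would spell out most carefully — is verifying that the abstract framework is being applied with the \emph{right} inner product, namely the one induced by $\singularform(\cdot,\cdot)$ itself rather than by $\Whnorm{\cdot}$; since $\singularform(\cdot,\cdot)$ is itself SPD on $\Wh$ (it \emph{is} an inner product, not merely coercive with respect to an auxiliary norm), the projections $\singularP_j$ for $1\le j\le J$ are genuine $\singularform$-orthogonal projections onto $\Wj$, and $\singularP_0$ — although built from the non-inherited coarse form $\coarsesingularform(\cdot,\cdot)$ — is handled by the local stability estimate $\singularform(\bw_0,\bw_0)\le \localstabconstant\,\locsingularform[0](\bw_0,\bw_0)$ of \cref{lemma:SPP-localstability}, exactly as the abstract theory permits for inexact local solvers. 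With this identification, the hypotheses of \cite[Theorem 2.9]{ToselliWidlund10} are met verbatim and the conclusion follows. I do not expect any deep obstacle here; the work was already done in establishing \cref{lemma:StableDecomp} and \cref{lemma:SPP-localstability}, and this theorem is essentially a bookkeeping corollary parallel to \cref{theorem:SPP-additive}.
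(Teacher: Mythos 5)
Your proposal is correct and follows essentially the same route as the paper: the paper's proof likewise just invokes \cite[Theorem 2.9]{ToselliWidlund10}, with the stable decomposition of \cref{lemma:StableDecomp}, the local stability estimate of \cref{lemma:SPP-localstability}, and the strengthened Cauchy--Schwarz inequalities~\eqref{eq:SPP-strengthenedCS} supplying the hypotheses, and refers to \cref{remark:relaxation-multiplicative} for the coarse-space condition $\localstabconstant<2$. Your additional care about the explicit contraction bound and the role of the non-inherited coarse form is a fair elaboration of the same argument, not a different proof.
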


\begin{proof}
  We invoke~\cite[Theorem 2.9]{ToselliWidlund10}, where again the stable decomposition from \cref{lemma:StableDecomp}, the local stability estimate from \cref{lemma:SPP-localstability}, and the strengthened Cauchy-Schwarz inequalities~\eqref{eq:SPP-strengthenedCS} constitute the assumptions. For the coarse grid space, we additionally refer to \cref{remark:relaxation-multiplicative}.
\end{proof}

%%%%%%%%%%%%%%%%%%%%%%%%%%%%%%%%%%%%%%%%%%%%%%%%%%
\subsection{Proofs of the main results}
\label{sec:proof}
%%%%%%%%%%%%%%%%%%%%%%%%%%%%%%%%%%%%%%%%%%%%%%%%%%

\begin{proof}[Proof of \cref{theorem:additive}]
  \Cref{theorem:SPP-additive} guarantees the boundedness of the spectrum of $\singularPad \singularoperator$ with respect to the singularly perturbed problem, uniform with respect to the perturbation parameter $\scaledstorage^{-1}$ for any $\scaledstorage >0$.
  Thus, the equivalence of the three-field formulation and the singularly perturbed problem yields the convergence proof.
\end{proof}

\begin{proof}[Proof of \cref{theorem:multiplicative}]
  \Cref{theorem:SPP-multiplicative} guarantees the convergence of the multiplicative Schwarz method~\eqref{eq:SPP-multiplicativeschwarz} with respect to the singularly perturbed problem with a contraction number independent of the perturbation parameter $\scaledstorage^{-1}$, and as a consequence, uniform for any $\scaledstorage >0$.
  Thus, the equivalence of the three-field formulation and the singularly perturbed problem yields the convergence proof.
\end{proof}
%%%%%%%%%%%%%%%%%%%%%%%%%%%%%%%%%%%%%%%%%%%%%%%%%%
%%%%%%%%%%%%%%%%%%%%%%%%%%%%%%%%%%%%%%%%%%%%%%%%%%
\section{Numerical experiments}
\label{sec:experiments}
%%%%%%%%%%%%%%%%%%%%%%%%%%%%%%%%%%%%%%%%%%%%%%%%%%
%%%%%%%%%%%%%%%%%%%%%%%%%%%%%%%%%%%%%%%%%%%%%%%%%%

For testing the numerical performance of the two-level Schwarz
preconditioners defined in \cref{sec:twolevelalgorithm}, we follow the
setting in \cite[Section 6.1]{HongKraus19} and construct a test case
with solution in
$[H^1_0(\Omega)]^2\times\Hdiv_0(\Omega)\times L^2_0(\Omega)$.
For a more in depth investigation with different kinds of boundary
conditions, as well as a comparison to multilevel algorithms we refer the
reader to~\cite{Meggendorfer23thesis}.
The implementation of the solvers is based on the finite element library \texttt{DEAL.II}~\cite{dealii22}.

Consider \eqref{eq:modelproblem} with isotropic permeability
$\scaledpermeability=\scaledkappa I$ and right hand side given by
\begin{gather*}
  \bbf =
  \begin{pmatrix}
    900 \partial_x\phi - \partial_y^3\phi - \partial_x^2\partial_y\phi\\
    900 \partial_y\phi + \partial_x^3\phi + \partial_x\partial_y^2\phi
  \end{pmatrix}, \qquad
  g = 900 \scaledkappa\upDelta\phi - \scaledstorage(900\phi - 1),
\end{gather*}
where $\phi$ is defined on the square $\Omega=(0,1)\times(0,1)$ by
\begin{gather*}
  \phi = x^2(x-1)^2y^2(y-1)^2.
\end{gather*}
Homogeneous Dirichlet boundary conditions are prescribed for the solid
displacement $\bu$ as well as for the normal direction of the
seepage velocity $\bv$ on the whole boundary $\partial\Omega$, i.e.,
\begin{gather*}
  \bu = \bm 0 \quad \text{on } \partial\Omega,\qquad
  \bv\cdot \bn = 0 \quad \text{on } \partial\Omega.
\end{gather*}
Here, $\bn$ denotes the unit outward normal vector. The solution is
defined only up to an additive constant for the pressure. Thus, we
search for a mean-value free solution in the pressure component
satisfying
\begin{gather*}
  \int_\Omega p\,dx=0.
\end{gather*}
Then, the unique solution to this system is given by
\begin{gather*}
  \bu =
  \begin{pmatrix}
    \partial_y \phi\\ -\partial_x \phi
  \end{pmatrix}, \qquad
  p = 900 \phi - 1, \qquad
  \bv = -900\scaledkappa\nabla p.
\end{gather*}

The mesh consists of an equidistant subdivision of $\Omega\subset\R^2$ into rectangular cells.
The system is discretized with triplets $RT_k\times RT_k\times Q_k$, $k\ge 0$, of equal order Raviart-Thomas and discontinuous cell-wise polynomial finite element functions, such that the matching condition
\begin{gather*}
    \div RT_k = Q_k
\end{gather*}
is fulfilled.
To solve the algebraic equations we use the Krylov subspace method GMRES preconditioned by the multiplicative two-level Schwarz method \eqref{eq:multiplicativeschwarz}, which combines a multiplicative subdomain solver with a multiplicative application of the coarse grid solver.
As stopping criterion we use a reduction of the unpreconditioned starting residual
%to $10^{-8}$ 
measured in the $L^2$-norm by a factor $10^{8}$.
The subdomains $\Omega_j$ form vertex patches, which are unions of four cells for all interior vertices with an overlap of size $\delta=h$.
Patches consisting of two cells for vertices at the boundary $\partial\Omega$ and single cells for vertices in corners are excluded from the set of patches, since we could not observe any beneficial effects of their inclusion for calculations with homogeneous boundary conditions, cf.~\cite[Section 3.5]{Meggendorfer23thesis}.
To realize the homogeneous boundary conditions on the boundary $\partial\Omega_j$ of the vertex patches, we build the patch matrices only out of interior degrees of freedom of each vertex patch $\Omega_j$.
The coarse space is always assembled one level below the actual level, i.e., $H=2h$ leading to a constant factor $\frac{H}{\delta}=2$.

The coarse and local problems are solved using a singular value decomposition in order to deal properly with the one-dimensional kernel of their operators caused by the nonuniqueness of the pressure solution on each subdomain.

\begin{table}[tp]
  \centering
  \begin{tabular}[t]{c|cccc}
    \toprule
    $\scaledlambda$     &$1$&$10^6$&$1$   &$10^6$\\
    $\scaledkappa^{-1}$ &$1$&$1$   &$10^6$&$10^6$\\
    $h$ &&&& \\
    \hline
    $\nfrac14$     &4&5&6&6\\
    $\nfrac18$     &5&6&6&6\\
    $\nfrac1{16}$  &5&6&6&6\\
    $\nfrac1{32}$  &4&6&6&6\\
    \bottomrule
  \end{tabular}
  \hspace{2em}
  \begin{tabular}[t]{c|cccc}
    \toprule
    $\scaledlambda$     &$1$&$10^6$&$1$   &$10^6$ \\
    $\scaledkappa^{-1}$ &$1$&$1$   &$10^6$&$10^6$ \\
    $h$ & \\
    \hline
    $\nfrac1{16}$ &5&7&7&7\\
    $\nfrac1{32}$ &5&8&8&8\\
    $\nfrac1{64}$ &5&8&8&8\\
    $\nfrac1{128}$&5&8&8&8\\
    \bottomrule
  \end{tabular}
  \caption{Uniformity of iteration counts of GMRES for different parameter settings when $\scaledstorage=0$, $RT_2\times RT_2\times Q_2$.
    Multiplicative two-level Schwarz (left table), multiplicative multilevel Schwarz (right table).}
  \label{tab:uniformity}
\end{table}%

In~\cref{tab:uniformity} (left side) we observe uniform convergence of the method in a set of sample calculations with $RT_2\times RT_2\times Q_2$ finite elements, from which we conclude that no additional relaxation of the coarse grid bilinear form is needed albeit required by the theory. This observation gave reason to~\cref{remark:relaxation-multiplicative}.
Furthermore, throughout all calculations with polynomial degree $k=2$, we found that the iteration counts remain constant for a mesh size below $h=\frac{1}{32}$.
This holds true for all variations of the model parameters in the system, i.e., $\scaledlambda$, $\scaledkappa^{-1}$, and $\scaledstorage$.
A comparison calculation in~\cref{tab:uniformity} (right-hand side) also confirms this for more refined meshes with the multilevel Schwarz analogue $\Mmu$ of the two-level algorithm \eqref{eq:multiplicativeschwarz}.
For a definition of $\Mmu$ let there be given a hierarchy of nested meshes $\mesh[h_0]\sqsubset\cdots\sqsubset\mesh[h_L]$ with corresponding finite element spaces $X_{h_0}\subset\cdots\subset X_{h_L}$ for mesh sizes $h_0\le\ldots\le h_L=h$.
Here, the symbol $\sqsubset$ means that mesh $\mesh[h_{\ell+1}]$ results by a refinement of $\mesh[h_{\ell}]$.
Then, $\Mmu$ is defined by
\begin{align*}
    %\label{eq:MG-multiplicativeschwarz}
    \Mmu &= \left(I - \Emu^{\text{MG}}\right) \systemoperator^{-1},
    \ \
    \Emu^{\text{MG}}
    = \prod_{j=1}^{J_L} \left(I - \mixedP_j^L\right) \cdots \prod_{j=1}^{J_1} \left(I - \mixedP_j^1\right) \left(I - \mixedP_0^0\right),
\end{align*}
where $J_\ell$ denotes the number of subdomains on each level $\ell=0,\ldots,L$, and $\mixedP_j^\ell$ are the projection operators of each subdomain on each level.
In the multilevel setting we observe about two iterations more than in the two-level case, which is due to the relaxed coarse scale solve.
Nevertheless, the numbers show the independence of the iteration counts with respect to the levels on a mesh with $h=\frac{1}{32}$ already.

\begin{table}[tp]
  \centering
  \begin{tabular}[t]{cc|ccccccc}
    \toprule
    & & \multicolumn{7}{c}{$\scaledkappa^{-1}$}\\
    & & $10^{-6}$ & $10^{-4}$ & $10^{-2}$ & $1$ & $10^{2}$ & $10^{4}$ & $10^{6}$
    \\
    \hline
    \multirow{8}{*}{$\scaledlambda$}
    % & $0$       &2&3&3&4&4&5&6\\
    & $10^{-6}$ &2&3&3&4&4&5&6\\
    & $10^{-4}$ &2&3&3&4&4&5&6\\
    & $10^{-2}$ &2&3&3&4&4&5&6\\
    & $1$       &2&3&4&4&4&5&6\\
    & $10^2$    &2&3&4&5&6&6&6\\
    & $10^4$    &2&2&3&5&6&6&6\\
    & $10^6$    &2&2&3&4&6&6&6\\
    \bottomrule
  \end{tabular}
  \caption{Robustness of iteration counts of GMRES with respect to $\scaledlambda$ and $\scaledkappa^{-1}$ (for $\scaledstorage=0$).
    Multiplicative two-level Schwarz.
    $RT_2\times RT_2\times Q_2$, $h=\frac1{32}$.}
  \label{tab:mult_lambda_resistance_fe2}
\end{table}%

\begin{table}[tp]
  \centering
  \begin{tabular}[t]{c|ccccccccc}
    \toprule
    % $h$ &
    $\scaledstorage$ & $0$ & $10^{-10}$ & $10^{-4}$ & $10^{-2}$ & $1$ & $10^2$ & $10^4$ & $10^6$ & $10^8$ \\
    \hline
    &4&4&4&4&4&4&6&2&1\\
    \bottomrule
  \end{tabular}
  \caption{Robustness of iteration counts of GMRES with respect to $\scaledstorage$ (for $\scaledlambda=\scaledkappa^{-1}=1$).
    Multiplicative two-level Schwarz.
    $RT_2\times RT_2\times Q_2$, $h=\frac1{32}$.}
  \label{tab:mult_storage_fe2}
\end{table}%

Now, we turn our attention to an examination of the robustness of the method with respect to the scaled model parameters in the system, i.e., $\scaledlambda$, $\scaledkappa^{-1}$, and $\scaledstorage$.
In this test scenario we choose again polynomial degree $k=2$ and show the converged iteration counts for a fixed mesh with mesh size $h=\frac1{32}$.
The robustness of the multiplicative two-level Schwarz method with respect to varying the scaled parameters $\scaledlambda$ and $\scaledkappa^{-1}$ is demonstrated in \cref{tab:mult_lambda_resistance_fe2}.
Here we have set the scaled storage capacity coefficient $\scaledstorage=0$, which actually represents the most difficult case of the saddle point problem for the solver.
The iteration counts of GMRES are at most six for the whole range of $10^{-6}\le\scaledlambda\le 10^6$, and $10^{-6}\le\scaledkappa^{-1}\le 10^6$.
For small $\scaledkappa^{-1}$ the iteration numbers are even smaller, which indicates the dependence
on $\scaledRmax$ that we have seen in the convergence proof in \cref{sec:analysis}.
On the other hand, we do not observe an increase in iterations for large $\scaledkappa^{-1}$, which corresponds to small permeabilities.
The situation that $\scaledkappa^{-1}$ takes small values is also realistic and occurs if the product of $\mu$ and $\tau$ and the unscaled permeability is large,
cf. the scaling of the physical parameters in \eqref{eq:rescaledparameters}.
Moreover, the scaled parameter $\scaledlambda$ is defined as the ratio of the Lam\'e coefficients that might be small in certain cases or become large for incompressible materials.
For variations in $\scaledlambda$, the table shows nearly constant iterations for each fixed $\scaledkappa^{-1}$; only for $\scaledkappa^{-1}=10^2$ there is a slight shift in the numbers when $\scaledlambda>10^2$.

Variations in the scaled storage coefficient $\scaledstorage$ do not effect the performance of the method for values $\scaledstorage\le 10^2$ as can be seen in~\cref{tab:mult_storage_fe2}.
Only for very large values of $\scaledstorage > 10^2$ the iterations increase very slightly before they drop to 1.
This result is presented for $\scaledlambda=\scaledkappa^{-1}=1$ only but holds true for all possible combinations of $\scaledlambda$ and $\scaledkappa^{-1}$.
Note that in realistic calculations the scaled storage coefficient $\scaledstorage$ might take values even larger than one if the poroelastic medium contains soft inclusions modeled by a storage capacity greater than zero, together with small values of the Biot-Willis constant $\alpha$ and/or a large Lam\'e coefficient $\mu$, cf. \eqref{eq:rescaledparameters}.

\begin{table}[tp]
  \centering
  \begin{tabular}[t]{c|ccccccc}
    \toprule
    & \multicolumn{7}{c}{$\scaledkappa^{-1}$} \\
    $h$ & $10^{-6}$ & $10^{-4}$ & $10^{-2}$ & $1$ & $10^{2}$ & $10^{4}$ & $10^{6}$\\
    \hline
    % $\nfrac14$     & 2 & 3 & 5 & 6 & 7 &  7 &  7\\
    $\nfrac18$     & 3 & 4 & 6 & 8 & 9 & 11 & 12\\
    $\nfrac1{16}$  & 3 & 5 & 7 & 8 & 9 & 13 & 14\\
    $\nfrac1{32}$  & 3 & 5 & 7 & 8 & 8 & 11 & 15\\
    $\nfrac1{64}$  & 3 & 5 & 7 & 8 & 8 &  9 & 15\\
    $\nfrac1{128}$ & 4 & 6 & 7 & 8 & 8 &  9 & 15\\
    \bottomrule
  \end{tabular}
  \caption{Lowest order case $RT_0\times RT_0\times Q_0$.
    Iteration counts of GMRES with respect to $\scaledkappa^{-1}$.
    Multiplicative two-level Schwarz.
    $\scaledlambda=100$, $\scaledstorage=0$.
    }
  \label{tab:mult_lambda_resistance_fe0}
\end{table}%

Last, we turn our attention to the lower order case for polynomial degrees $k=0$ and $k=1$.
In this situation, we observed some sort of stiffness of the method compared to polynomial degrees $k \geq 2$.
In a sample calculation in the lowest-order case $k=0$ in \cref{tab:mult_lambda_resistance_fe0} with $\scaledlambda = 100$, $\scaledkappa^{-1} = 1$, and $\scaledstorage = 0$ we need more mesh refinements for some parameter settings (down to $h=\frac{1}{128}$) until the iteration
counts remain unchanged.
And we notice a stronger influence of the variation in the permeability on the iteration counts as they grow faster for larger values of $\scaledkappa^{-1}$.
Nevertheless, the method shows good performance and reasonable robustness also in the lowest-order case, i.e., employing the $RT_0\times RT_0\times Q_0$ space triplet.

%%%%%%%%%%%%%%%%%%%%%%%%%%%%%%%%%%%%%%%%%%%%%%%%%%
\section{Conclusion}
%%%%%%%%%%%%%%%%%%%%%%%%%%%%%%%%%%%%%%%%%%%%%%%%%%
In this article, we proposed two-level Schwarz methods to solve $\Hdiv$-conforming discretizations of Biot's quasi-static consolidation model.
We rigorously proved convergence of the monolithic algorithms by transforming the three-field formulation into an equivalent singularly perturbed symmetric positive definite system, for which a stable decomposition and local stability bounds form the basis for applying the abstract Schwarz convergence theory for noninherited forms.
Finally, independence with respect to the perturbation parameter resulted in the proof of convergence, robust with respect to the model parameters under the assumption that the permeability tensor can be bounded in the sense of~\eqref{eq:normequivalence_k}.
The numerical tests demonstrated the performance and robustness of the multiplicative two-level Schwarz method for different polynomial degrees of the mixed finite element spaces and confirmed the theoretical results, even for nearly incompressible and low-permeable materials.
Beyond the theoretical results, a multilevel Schwarz approach showed comparable performance at lower cost.

The three-dimensional case as well as the investigation of high-frequency-high-contrast problems are the subjects of future work.

%%%%%%%%%%%%%%%%%%%%%%%%%%%%%%%%%%%%%%%%%%%%%%%%%%
\section*{Acknowledgment}
%%%%%%%%%%%%%%%%%%%%%%%%%%%%%%%%%%%%%%%%%%%%%%%%%%
The third author acknowledges the funding by the German Science Fund (Deut\-sche Forschungsgemeinschaft, DFG) as part of the project “Physics-oriented solvers for multicompartmental poromechanics” under grant number 456235063.

%%%%%%%%%%%%%%%%%%%%%%%%%%%%%%%%%%%%%%%%%%%%%%%%%%
%%%%%%%%%%%%%%%%%% BIBLIOGRAPHY %%%%%%%%%%%%%%%%%%
%%%%%%%%%%%%%%%%%%%%%%%%%%%%%%%%%%%%%%%%%%%%%%%%%%
\bibliographystyle{alpha}
\bibliography{bib}

\newcommand{\etalchar}[1]{$^{#1}$}
\begin{thebibliography}{BKMRB21}

\bibitem[ABF{\etalchar{+}}22]{dealii22}
D.~Arndt, W.~Bangerth, M.~Feder, M.~Fehling, R.~Gassm{\"o}ller, T.~Heister,
  L.~Heltai, M.~Kronbichler, M.~Maier, P.~Munch, J.-P. Pelteret, S.~Sticko,
  B.~Turcksin, and D.~Wells.
\newblock The \texttt{deal.II} library, version 9.4.
\newblock {\em J. Numer. Math.}, 30(3):231--246, 2022.

\bibitem[ABK15]{AxelssonBlahetaKohut15}
O.~Axelsson, R.~Blaheta, and R.~Kohut.
\newblock Preconditioning methods for high-order strongly stable time
  integration methods with an application for a {DAE} problem.
\newblock {\em Numer. Linear Algebra Appl.}, 22(6):930--949, 2015.

\bibitem[AFW97]{ArnoldFalkWinther97Hdiv}
D.N. Arnold, R.S. Falk, and R.~Winther.
\newblock Preconditioning in {$H({\rm div})$} and applications.
\newblock {\em Math. Comput.}, 66(219):957--984, 1997.

\bibitem[AFW10]{ArnoldFalkWinther10}
D.N. Arnold, R.S. Falk, and R.~Winther.
\newblock Finite element exterior calculus: from {H}odge theory to numerical
  stability.
\newblock {\em Bull. Amer. Math. Soc. (N.S.)}, 47(2):281--354, 2010.

\bibitem[AGH{\etalchar{+}}19]{adler2019robust}
J.H. Adler, F.J. Gaspar, X.~Hu, C.~Rodrigo, and L.T. Zikatanov.
\newblock Robust block preconditioners for {B}iot's model.
\newblock In {\em Domain Decomposition Methods in Science and Engineering XXIV.
  DD 2017. Lecture Notes in Computational Science and Engineering}, volume 125,
  pages 3--16. Springer, Cham, 2019.

\bibitem[AHH{\etalchar{+}}23]{AdlerHeHuMaclachlanOhm23}
J.H. Adler, Y.~He, X.~Hu, S.~MacLachlan, and P.~Ohm.
\newblock Monolithic multigrid for a reduced-quadrature discretization of
  poroelasticity.
\newblock {\em SIAM J. Sci. Comput.}, 45(3):S54--S81, 2023.

\bibitem[AKW23]{AlmaniKumarWheeler23}
T.~Almani, K.~Kumar, and M.F. Wheeler.
\newblock Convergence analysis of single rate and multirate fixed stress split
  iterative coupling schemes in heterogeneous poroelastic media.
\newblock {\em Numer. Methods Partial Differential Eq.}, pages 1--25, 2023.

\bibitem[BBDP16]{boffi2016nonconforming}
D.~Boffi, M.~Botti, and D.A. Di~Pietro.
\newblock A nonconforming high-order method for the {B}iot problem on general
  meshes.
\newblock {\em SIAM J. Sci. Comput.}, 38(3):A1508--A1537, 2016.

\bibitem[BBF13]{BoffiBrezziFortin13}
D.~Boffi, F.~Brezzi, and M.~Fortin.
\newblock {\em Mixed finite element methods and applications}.
\newblock Springer Series in Computational Mathematics. Springer, Berlin,
  Heidelberg, 2013.

\bibitem[Bio41]{Biot1941general}
M.A. Biot.
\newblock General theory of three-dimensional consolidation.
\newblock {\em J. Appl. Phys.}, 12(2):155--164, 1941.

\bibitem[Bio55]{Biot1955theory}
M.A. Biot.
\newblock Theory of elasticity and consolidation for a porous anisotropic
  solid.
\newblock {\em J. Appl. Phys.}, 26(2):182--185, 1955.

\bibitem[BKMRB21]{BoonKuchtaMardal21}
W.M. Boon, M.~Kuchta, K.-A. Mardal, and R.~Ruiz-Baier.
\newblock Robust preconditioners for perturbed saddle-point problems and
  conservative discretizations of {B}iot's equations utilizing total pressure.
\newblock {\em SIAM J. Sci. Comput.}, 43(4):B961--B983, 2021.

\bibitem[BLMW17]{Baerland2017weakly}
T.~B{\ae}rland, J.J. Lee, K.-A. Mardal, and R.~Winther.
\newblock Weakly imposed symmetry and robust preconditioners for {B}iot's
  consolidation model.
\newblock {\em Comput. Methods Appl. Math.}, 17(3):377--396, 2017.

\bibitem[Bre74]{Brezzi74}
F.~Brezzi.
\newblock On the existence, uniqueness and approximation of saddle-point
  problems arising from {L}agrangian multipliers.
\newblock {\em ESAIM: Mathematical Modelling and Numerical Analysis -
  Mod\'elisation Math\'ematique et Analyse Num\'erique}, 8(R2):129--151, 1974.

\bibitem[Bre04]{Brenner04Korn}
S.C. Brenner.
\newblock Korn's inequalities for piecewise {$H^1$} vector fields.
\newblock {\em Math. Comput.}, 73(247):1067--1087, 2004.

\bibitem[BW05]{BrennerWang05}
S.C. Brenner and K.~Wang.
\newblock Two-level additive {S}chwarz preconditioners for {$C^0$} interior
  penalty methods.
\newblock {\em Numer. Math.}, 102:231--255, 2005.

\bibitem[CKS05]{CockburnKanschatSchoetzau05}
B.~Cockburn, G.~Kanschat, and D.~Schötzau.
\newblock A locally conservative {LDG} method for the incompressible
  {N}avier-{S}tokes equations.
\newblock {\em Math. Comput.}, 74:1067--1095, 2005.

\bibitem[Cou04]{Coussy04}
O.~Coussy.
\newblock {\em Poromechanics}.
\newblock John Wiley \& Sons Ltd, The Atrium, Southern Gate, Chichester, West
  Sussex PO19 8SQ, England, 2004.

\bibitem[DC93]{DetournayCheng93}
E.~Detournay and A.~H.-D. Cheng.
\newblock Fundamentals of poroelasticity.
\newblock In C.~Fairhurst, editor, {\em Analysis and Design Methods}, volume~2
  of {\em Comprehensive Rock Engineering: Principles, Practice and Projects},
  chapter~5, pages 113--171. Pergamon, Oxford, 1993.

\bibitem[DV02]{DesvillettesVillani02}
L.~Desvillettes and C.~Villani.
\newblock On a variant of korn's inequality arising in statistical mechanics.
\newblock {\em ESAIM: Control, Optimisation and Calculus of Variations},
  8:603--619, 2002.

\bibitem[DW94]{DryjaWidlund94}
M.~Dryja and O.~B. Widlund.
\newblock Domain decomposition algorithms with small overlap.
\newblock {\em SIAM J. Numer. Anal.}, 15(3):604--620, 1994.

\bibitem[FK01]{FengKarakashian01-1}
X.~Feng and O.~Karakashian.
\newblock Analysis of two-level overlapping additive {S}chwarz preconditioners
  for a discontinuous {G}alerkin method.
\newblock In {\em Proceedings of the thirteenth international conference on
  domain decomposition methods}, pages 235--243. DDM.org Press, 2001.

\bibitem[GRH{\etalchar{+}}19]{GasparRodrigoHu19}
F.J. Gaspar, C.~Rodrigo, X.~Hu, P.~Ohm, J.~Adler, and L.~Zikatanov.
\newblock New stabilized discretizations for poroelasticity equations.
\newblock In G.~Nikolov, N.~Kolkovska, and K.~Georgiev, editors, {\em Numerical
  Methods and Applications}, pages 3--14, Cham, 2019. Springer International
  Publishing.

\bibitem[Hip97]{Hiptmair97}
R.~Hiptmair.
\newblock Multigrid method for $h(div)$ in three dimensions.
\newblock {\em Electron. Trans. Numer. Anal.}, 6:133--152, 1997.

\bibitem[HK18]{HongKraus18}
Q.~Hong and J.~Kraus.
\newblock Parameter-robust stability of classical three-field formulation of
  {B}iot's consolidation model.
\newblock {\em Electron. Trans. Numer. Anal.}, 48:202--226, 2018.

\bibitem[HKK{\etalchar{+}}22]{Hong2022robust}
Q.~Hong, J.~Kraus, M.~Kuchta, M.~Lymbery, K.-A. Mardal, and M.E. Rognes.
\newblock Robust approximation of generalized {B}iot-{B}rinkman problems.
\newblock {\em J. Sci. Comput.}, 93(3):Paper No. 77, 28, 2022.

\bibitem[HKLP19]{HongKraus19}
Q.~Hong, J.~Kraus, M.~Lymbery, and F.~Philo.
\newblock Conservative discretizations and parameter-robust preconditioners for
  {B}iot and multiple-network flux-based poroelasticity models.
\newblock {\em Numer. Linear Algebra Appl.}, 26(4):e2242, 2019.

\bibitem[HKLP20]{HongEtAl2020uzawa}
Q.~Hong, J.~Kraus, M.~Lymbery, and F.~Philo.
\newblock Parameter-robust uzawa-type iterative methods for double saddle point
  problems arising in biot's consolidation and multiple-network poroelasticity
  models.
\newblock {\em Math. Models Methods Appl. Sci.}, 30(13):2523--2555, 2020.

\bibitem[HKLP23]{Hong2021new}
Q.~Hong, J.~Kraus, M.~Lymbery, and F.~Philo.
\newblock A new practical framework for the stability analysis of perturbed
  saddle-point problems and applications.
\newblock {\em Math. Comp.}, 92(340):607--634, 2023.

\bibitem[HKLW20]{HongEtAl2020fixed-stress}
Q.~Hong, J.~Kraus, M.~Lymbery, and M.F. Wheeler.
\newblock Parameter-robust convergence analysis of fixed-stress split iterative
  method for multiple-permeability poroelasticity systems.
\newblock {\em Multiscale Model. Simul.}, 18(2):916--941, 2020.

\bibitem[HL02]{HansboLarson02}
P.~Hansbo and M.G. Larson.
\newblock Discontinuous {G}alerkin methods for incompressible and nearly
  incompressible elasticity by {N}itsche's method.
\newblock {\em Comput. Methods Appl. Mech. Engrg.}, 191(17–18):1895–1908,
  2002.

\bibitem[HRGZ17]{Hu2017nonconforming}
X.~Hu, C.~Rodrigo, F.J. Gaspar, and L.T. Zikatanov.
\newblock A nonconforming finite element method for the {B}iot's consolidation
  model in poroelasticity.
\newblock {\em J. Comput. Appl. Math.}, 310:143--154, 2017.

\bibitem[KKLR23]{kraus2023fixedstress}
J.~Kraus, K.~Kumar, M.~Lymbery, and F.~A. Radu.
\newblock A fixed-stress type splitting method for nonlinear poroelasticity,
  2023.

\bibitem[KLL{\etalchar{+}}23]{Kraus2023hybridized}
J.~Kraus, P.L. Lederer, M.~Lymbery, K.~Osthues, and J.~Sch\"{o}berl.
\newblock Hybridized discontinuous {G}alerkin/hybrid mixed methods for a
  multiple network poroelasticity model with application in biomechanics.
\newblock {\em SIAM J. Sci. Comput.}, 45(6):B802--B827, 2023.

\bibitem[KM15]{KanschatMao15}
G.~Kanschat and Y.~Mao.
\newblock Multigrid methods for {$\mathbf H^{\text{div}}$}-conforming
  discontinuous {G}alerkin methods for the {S}tokes equations.
\newblock {\em J. Numer. Math.}, 23(1):51--66, 2015.

\bibitem[KR18]{KanschatRiviere18}
G.~Kanschat and B.~Riviere.
\newblock A finite element method with strong mass conservation for {B}iot's
  linear consolidation model.
\newblock {\em J. of Scientific Computing}, 77(3):1762--1779, Dec 2018.

\bibitem[KS14]{KanschatSharma14}
G.~Kanschat and N.~Sharma.
\newblock Divergence-conforming discontinuous {G}alerkin methods and {$C^0$}
  interior penalty methods.
\newblock {\em SIAM J. Numer. Anal.}, 52(4):1822--1842, 2014.

\bibitem[KTJ11]{Kim2011stability}
J.~Kim, H.A. Tchelepi, and R.~Juanes.
\newblock Stability, accuracy and efficiency of sequential methods for coupled
  flow and geomechanics.
\newblock {\em SPE Journal}, 16(2):1--19, 2011.

\bibitem[Lee16]{Lee2016robust}
J.J. Lee.
\newblock Robust error analysis of coupled mixed methods for {B}iot's
  consolidation model.
\newblock {\em J. Sci. Comput.}, 69(2):610--632, 2016.

\bibitem[LMW17]{Lee2016parameter}
J.J. Lee, K.-A. Mardal, and R.~Winther.
\newblock Parameter-robust discretization and preconditioning of {B}iot's
  consolidation model.
\newblock {\em SIAM J. Sci. Comput.}, 39(1):A1--A24, 2017.

\bibitem[LPMR19]{Lee2019mixed}
J.J. Lee, E.~Piersanti, K.-A. Mardal, and M.E. Rognes.
\newblock A mixed finite element method for nearly incompressible
  multiple-network poroelasticity.
\newblock {\em SIAM J. Sci. Comput.}, 41(2):A722--A747, 2019.

\bibitem[LWXZ07]{LeeWuXuZikatanov07}
Y.-J. Lee, J.~Wu, J.~Xu, and L.~Zikatanov.
\newblock Robust subspace correction methods for nearly singular systems.
\newblock {\em Math. Models Methods Appl. Sci.}, 17(11):1937--1963, 2007.

\bibitem[LY23]{LeeYi22}
S.~Lee and S.-Y. Yi.
\newblock Locking-free and locally-conservative enriched {G}alerkin method for
  poroelasticity.
\newblock {\em J. Sci. Comput.}, 94(26), 2023.

\bibitem[Meg23]{Meggendorfer23thesis}
S.~Meggendorfer.
\newblock {\em Multilevel {S}chwarz Methods for Porous Media Problems}.
\newblock {D}issertation, Universit{\"a}t Heidelberg, 2023.

\bibitem[ML92]{MuradLoula1992improved}
M.A. Murad and A.F.D. Loula.
\newblock Improved accuracy in finite element analysis of {B}iot's
  consolidation problem.
\newblock {\em Comput. Methods Appl. Mech. Engrg.}, 95(3):359--382, 1992.

\bibitem[ML94]{MuradLoula1994stability}
M.A. Murad and A.F.D. Loula.
\newblock On stability and convergence of finite element approximations of
  {B}iot's consolidation problem.
\newblock {\em Inter. J. Numer. Methods Engrg.}, 37(4):645--667, 1994.

\bibitem[MW13]{Mikelic2013convergence}
A.~Mikeli\'c and M.F. Wheeler.
\newblock Convergence of iterative coupling for coupled flow and geomechanics.
\newblock {\em Comput. Geosci.}, 17:455--461, 2013.

\bibitem[ORB16]{oyarzua2016locking}
R.~Oyarz\'ua and R.~Ruiz-Baier.
\newblock Locking-free finite element methods for poroelasticity.
\newblock {\em SIAM J. Numer. Anal.}, 54(5):2951--2973, 2016.

\bibitem[PS11]{Pechstein2011weighted}
Clemens Pechstein and Robert Scheichl.
\newblock Weighted {P}oincar\'{e} inequalities and applications in domain
  decomposition.
\newblock In {\em Domain decomposition methods in science and engineering
  {XIX}}, volume~78 of {\em Lect. Notes Comput. Sci. Eng.}, pages 197--204.
  Springer, Heidelberg, 2011.

\bibitem[PS13]{Pechstein2013weighted}
Clemens Pechstein and Robert Scheichl.
\newblock Weighted {P}oincar\'{e} inequalities.
\newblock {\em IMA J. Numer. Anal.}, 33(2):652--686, 2013.

\bibitem[PW07a]{PhillipsWheeler2007coupling1}
P.J. Phillips and M.F. Wheeler.
\newblock A coupling of mixed and continuous {G}alerkin finite element methods
  for poroelasticity. {I}. {T}he continuous in time case.
\newblock {\em Comput. Geosci.}, 11(2):131--144, 2007.

\bibitem[PW07b]{PhillipsWheeler2007coupling2}
P.J. Phillips and M.F. Wheeler.
\newblock A coupling of mixed and continuous {G}alerkin finite element methods
  for poroelasticity. {II}. {T}he discrete-in-time case.
\newblock {\em Comput. Geosci.}, 11(2):145--158, 2007.

\bibitem[PW08]{PhillipsWheeler2008coupling}
P.J. Phillips and M.F. Wheeler.
\newblock A coupling of mixed and discontinuous {G}alerkin finite-element
  methods for poroelasticity.
\newblock {\em Comput. Geosci.}, 12(4):417--435, 2008.

\bibitem[SBK{\etalchar{+}}19]{Storvik_2019optimization}
E.~Storvik, J.W. Both, K.~Kumar, J.M. Nordbotten, and F.A. Radu.
\newblock On the optimization of the fixed-stress splitting for {B}iot's
  equations.
\newblock {\em Inter. J. Numer. Methods Engrg.}, 120(2):179--194, 2019.

\bibitem[Sch99]{Schoeberl99dissertation}
J.~Schöberl.
\newblock {\em Robust Multigrid Methods for Parameter Dependent Problems}.
\newblock {D}issertation, Johannes Kepler Universität Linz, 1999.

\bibitem[Sho00]{Showalter00diffusion}
R.E. Showalter.
\newblock Diffusion in poro-elastic media.
\newblock {\em J. Math. Anal. Appl.}, 251(1):310--340, 2000.

\bibitem[SST03]{SchoetzauSchwabToselli03}
D.~Schötzau, C.~Schwab, and A.~Toselli.
\newblock hp-{DGFEM} for incompressible flows.
\newblock {\em SIAM J. Numer. Anal.}, 40:2171--2194, 2003.

\bibitem[SVZ12]{Scheichl2012multilevel}
Robert Scheichl, Panayot~S. Vassilevski, and Ludmil~T. Zikatanov.
\newblock Multilevel methods for elliptic problems with highly varying
  coefficients on nonaligned coarse grids.
\newblock {\em SIAM J. Numer. Anal.}, 50(3):1675--1694, 2012.

\bibitem[Ter25]{Terzaghi1925erdbaumechanic}
K.~Terzaghi.
\newblock {\em Erdbaumechanik auf bodenphysikalischer Grundlage}.
\newblock F. Deuticke, 1925.

\bibitem[Ter43]{Terzaghi1943theoretical}
K.~Terzaghi.
\newblock {\em Theoretical Soil Mechanics}.
\newblock Wiley, 1943.

\bibitem[TW10]{ToselliWidlund10}
A.~Toselli and O.~Widlund.
\newblock {\em Domain Decomposition Methods - Algorithms and Theory}.
\newblock Springer-Verlag, Berlin Heidelberg, 2010.

\bibitem[WCWZ22]{WangCai22}
F.~Wang, M.~Cai, G.~Wang, and Y.~Zeng.
\newblock A mixed virtual element method for {B}iot's consolidation model.
\newblock {\em Computers {\&} Mathematics with Applications}, 126:31--42, 2022.

\bibitem[Yi13]{Yi2013coupling}
S.-Y. Yi.
\newblock A coupling of nonconforming and mixed finite element methods for
  {B}iot's consolidation model.
\newblock {\em Numer. Methods Partial Differential Equations},
  29(5):1749--1777, 2013.

\bibitem[Yi14]{Yi2014convergence}
S.-Y. Yi.
\newblock Convergence analysis of a new mixed finite element method for
  {B}iot's consolidation model.
\newblock {\em Numer. Methods Partial Differential Equations},
  30(4):1189--1210, 2014.

\bibitem[Že84a]{Zenisek1984existence}
A.~Ženíšek.
\newblock The existence and uniqueness theorem in {B}iot's consolidation
  theory.
\newblock {\em Aplikace matematiky}, 29(3):194--211, 1984.

\bibitem[Že84b]{Zenisek1984finite}
A.~Ženíšek.
\newblock Finite element methods for coupled thermoelasticity and coupled
  consolidation of clay.
\newblock {\em RAIRO Anal. Num\'er.}, 18(2):183--205, 1984.

\end{thebibliography}

\end{document}